\newtheorem{thm}{Theorem}[section]
\newtheorem{cor}[thm]{Corollary}
\newtheorem{lem}[thm]{Lemma}
\newtheorem{conj}[thm]{Conjecture}
\newtheorem{prop}[thm]{Proposition}
\theoremstyle{definition}
\theoremstyle{remark}
\newtheorem{rem}[thm]{Remark}
\numberwithin{equation}{section}
\newcommand{\F}{\mathbb{F}}
\newcommand{\C}{\mathbb{C}}
\newcommand{\Q}{\mathbb{Q}}
\newcommand{\Tr}{\mbox{Tr}}
\newcommand{\rad}{\mbox{rad}}
\newcommand{\Res}{\mbox{Res}}
\newcommand{\cond}{\mbox{cond}}
\def\cL{\mathcal{L}}
\def\cM{\mathcal{M}}
\def\cK{\mathcal{K}}
\def\cH{\mathcal{H}}
\def\cZ{\mathcal{Z}}
\def\cD{\mathcal{D}}
\def\fM{\mathfrak{M}}
\def\fH{\mathfrak{H}}
\def\fP{\mathfrak{P}}
\newcommand{\ccom}[1]{{\color{red}{Chantal: #1}} }
\newcommand{\kommentar}[1]{}
\newlist{primenumerate}{enumerate}{1}
\setlist[primenumerate,1]{label={(\arabic*$'$})}
\begin{document}

\title[Expected Values]{Expected Values of cubic Dirichlet $L$-functions Away from the Central Point}%

\author{Chantal David}
\address{Chantal David: 
Department of Mathematics and Statistics, Concordia University, 1455 de Maisonneuve West, Montr\'eal, Qu\'ebec, Canada H3G 1M8}
\email{chantal.david@concordia.ca}
\author{Patrick Meisner}
\address{Patrick Meisner: Department of Mathematical Sciences, Chalmers University of Technology, SE-412 96 Gothenburg, Sweden}
\email{meisner@chalmers.se}

\subjclass[2020]{11M06, 11M38, 11R16, 11R58.}
\keywords{Moments of Dirichlet L-functions. Cubic characters. Function fields. Random Matrix Theory.}


\begin{abstract}

We compute the expected value of Dirichlet $L$-functions over $\mathbb{F}_q[T]$ attached to cubic characters evaluated at an arbitrary $s \in (0,1)$.
We find a transition term at the point $s=\frac{1}{3}$, reminiscent of the transition at the point $s=\frac{1}{2}$ of the bound for the size of an $L$-function implied by the Lindel\"of hypothesis. We show that at $s=\frac{1}{3}$, the expected value matches corresponding statistics of the group of unitary matrices multiplied by a weight function. 
This is the first result in the literature computing the first moment at $s=\tfrac13$ for any family of cubic Dirichlet characters, over function fields or number fields, and 
it involves the deep connections between Dirichlet series of cubic Gauss sums and metaplectic Eisenstein series first introduced by Kubota, 
which is necessary to obtain the cancellation between the principal sum and the dual sum occurring at  $s=\tfrac13$.
\end{abstract}
\maketitle

\section{Introduction}

\subsection{Setup and Main Result}

Let $q=p^a$ be a prime power and consider the ring of polynomials $\mathbb{F}_q[T]$ consisting of polynomials with coefficients in the finite field $\mathbb{F}_q$. Denote $\chi$ as a Dirichlet character of $\mathbb{F}_q[T]$ and $\mathcal{M}$ the set of monic polynomials in $\mathbb{F}_q[T]$. Then define the $L$-function attached to $\chi$ as
\begin{align}\label{L-FuncDef}
    L(s,\chi) = \sum_{F\in \mathcal{M}} \frac{\chi(F)}{|F|^s},
\end{align}
where $|F| := q^{\deg F}$.

The Riemann Hypothesis implies that there exists a conjugacy class of unitary matrices $\Theta_\chi$, called the Frobenius class, such that 
\begin{align}\label{FrobDef}
L(s,\chi) = (1-q^{-s})^{1-\delta_\chi}\det(1-q^{\frac{1}{2}-s}\Theta_\chi)
\end{align}
where $\delta_\chi=0$ or $1$ depending on the parity of $\chi$. From here we can impose some trivial bound on the size of $L(s,\chi)$, and we have as $q$ tends to infinity
\begin{align}\label{transition}
    |L(s,\chi)| \ll \begin{cases} q^{(\frac{1}{2}-s)g} & s<\frac{1}{2} \\ 2^g & s=\frac{1}{2} \\ 1 & s>\frac{1}{2}
    \end{cases}
\end{align}
where $g$ is the dimension of $\Theta_\chi$. That is, it will be bounded for $s$ above $\frac{1}{2}$, while the bound grows exponentially as $s$ decreases below $\frac{1}{2}$. The bound at $s=\frac{1}{2}$ depends only on the genus and is believed to grow polynomially in $g$ on average.

Conrey, Farmer, Keating, Rubinstein and Snaith \cite{CFKRS} developed a recipe for conjecturing moments of $L$-functions over $\mathbb{Q}$ at $s=\frac{1}{2}$. Andrade and Keating \cite{AK} then adapt the recipe to $L$-functions over $\mathbb{F}_q(T)$ and conjecture that
\begin{align}\label{QuadMome}
    \frac{1}{|\mathfrak{M}_2(2g)|} \sum_{\chi\in\mathfrak{M}_2(2g)}L({\textstyle{\frac12}},\chi)^k = P_k(g)+o(1)
\end{align}
where $\mathfrak{M}_2(2g)$ is the set of primitive quadratic characters of genus $2g$ and $P_k$ is an explicit polynomial of degree $\frac{k(k+1)}{2}$. They prove this for $k=1$, while Florea \cite{F1,F2,F3} confirms it in the cases $k=2,3,4$, improving the quality of error terms from the number field case (for $k=4$, only the leading three terms of the polynomial $P_k$ of degree 10 can be obtained). 
In a recent spectacular work of Bergstr\"om, Diaconu, Petersen, Westerland \cite{BDPW} and Miller, Patzt, Petersen, Randal-Williams \cite{MPPR}, the authors show that for any positive integer $k$, the conjectures of \cite{CFKRS} hold for the $k$-th moment of quadratic characters over $\F_q[T]$, provided that $q$ is large enough with respect to $k$  (and where $q > 2^{24(k+1)}$ is enough!)

In addition to the papers mentionned above, many papers have been published on moments of quadratic characters at the central point both in number fields and function fields, including  \cite{AK,D,DW,Jut,KeS,Sound,Young}.

Moments of higher order characters have been less studied however, there has been some recent interest, including \cite{Baier-Young, Di2004,FHL,Luo,DFL}). In particular, the first author, with Florea and Lalin \cite{DFL}, showed that as $g\to\infty$
\begin{align}\label{DFLThm}
    \frac{1}{|\mathfrak{M}^1_3(g)|} \sum_{\chi \in \mathfrak{M}^1_3(g)} L(\textstyle{\frac12},\chi) = A_q + o(1)   
\end{align}
where $A_q$ is an explicit constant, depending only on $q$, and $\mathfrak{M}^1_3(g)$ is the set of primitive cubic characters of genus $g$ with the added condition that if $q\equiv 1 \mod{3}$, then $\chi|_{\mathbb{F}_q^*} = \chi_3$, a fixed, nontrivial cubic character of $\mathbb{F}_q^*$. Notice  that the expected value at $s=\frac12$ is bounded, contrary to the case of quadratic characters. This can be explained by the fact that quadratic characters form a symplectic family, while cubic characters form a unitary family.

In this paper, we are interested in what happens to values such as in \eqref{DFLThm} for arbitrary values of $0<s<1$ and for the family
\begin{align}\label{fHg}
\fH(g) = \left\{ \chi_F := \left(\frac{\cdot}{F}\right)_3 : \mbox{ genus of $\chi_F$ is $g$ and $F$ is square-free} \right\}
\end{align}
where $\left(\frac{\cdot}{F}\right)_3$ is the cubic-residue symbol modulo $F$ (as defined in Section \ref{characters}). 


\kommentar{Note that $\fH(g)$ is a thin subset of all cubic characters,
and (as we will see in Section \ref{characters}), that if $q\equiv 1 \mod{3}$ then $\fH(3g) \subset \mathfrak{M}^1_3(3g)$ while $\fH(3g+2)=\emptyset$. Further, the characters of $\fH(3g+1)$ will be of mixed parity (odd or even).}

In the following theorem, and in all the results of this paper, the error terms are independent of $q$, unless stated otherwise. 

\begin{thm}\label{MainThm} 
Let  $q\equiv 1 \bmod{6}$ be a prime power,  and $\epsilon>0$.  Then for any $0<s<1, s \neq \frac{1}{3}$, we have 
$$\frac{1}{|\fH(3g)|} \sum_{\chi\in \fH(3g)} L(s,\chi) = M_q(s) + O_{\epsilon} \left( q^{\frac{3}{10}(1-6s+\epsilon)g} +  q^{-\frac15 (1 + 9s - 15 \varepsilon )g + 2-2s} + q^{(1-3s)g}E_s(g) \right)$$
where
$$M_q(s) = \zeta_q(3s)\prod_P \left(1-\frac{1}{|P|^{3s}(|P|+1)}\right), \;\;\;
E_s(g) =  \begin{cases} 1 & s< \frac{2}{3} \\ g^2 & s=\frac{2}{3} \\ gq^{\frac{6}{5}(3s-2)g} & s>\frac{2}{3}\end{cases}$$
and $\zeta_q(s)$ is defined by \eqref{def-zeta}.\\
If $s=\frac{1}{3}$, we have $$\frac{1}{|\fH(3g)|} \sum_{\chi\in\fH(3g)} L({\textstyle{\frac13}},\chi)  = C_q (g +B_q) + O (1),$$
where the $O(1)$ is absolute (it does not depend on $q$ or $g$) as long at $g \geq 2$, $B_q$ is defined by \eqref{def-Bq}, and 
$$C_q = \prod_P \left(1-\frac{1}{|P|(|P|+1)}\right).$$
\end{thm}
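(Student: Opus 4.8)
The strategy is to open $L(s,\chi_F)$ as a Dirichlet polynomial, swap the order of summation, and reduce the sum over $\chi \in \fH(3g)$ to a Dirichlet series of cubic Gauss sums — exactly the object whose analytic continuation, via Kubota's theory of metaplectic Eisenstein series, controls the whole problem. Let me sketch the steps.

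Let me think about how the averaging goes. Since $\chi_F = \left(\frac{\cdot}{F}\right)_3$, for $F$ square-free, $\sum_{\chi\in\fH(3g)}\chi_F(G)$ is (up to the parity/leading-coefficient normalization forced by $q\equiv 1 \bmod 6$) a sum of cubic residue symbols $\left(\frac{G}{F}\right)_3$ over square-free $F$ of a fixed degree. By cubic reciprocity this becomes, essentially, $\sum_{\deg F = d}\left(\frac{F}{G}\right)_3 \mu^2(F)$, i.e. a short sum of the cubic character $\left(\frac{\cdot}{G}\right)_3$ twisted by $\mu^2$; and summing over all degrees with a weight picks out coefficients of a generating Dirichlet series $\Psi(u; G)$ built from cubic Gauss sums. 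So after the swap, $\frac{1}{|\fH(3g)|}\sum_\chi L(s,\chi)$ becomes roughly
$$
\frac{1}{|\fH(3g)|}\sum_{G\in\cM}\frac{1}{|G|^s}\,\big(\text{coefficient extraction from }\Psi(u;G)\big).
$$

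Now separate the ``diagonal'' $G = $ cube (which, after the Euler-product bookkeeping involving the $\left(1-\frac{1}{|P|^{3s}(|P|+1)}\right)$ factors, produces the main term $C_q(g+B_q)$ at $s=\tfrac13$, matching $M_q(s)$ for $s\neq\tfrac13$) from the ``off-diagonal'' part coming from non-cube $G$. The off-diagonal part is where the cubic Gauss-sum Dirichlet series genuinely enters: one writes it as a contour integral $\oint \Psi(u;\cdot)\,\frac{du}{u^{3g+1}}$ (Perron), moves the contour, and uses the meromorphic continuation and functional equation of the Kubota/Hoffstein–Patterson Dirichlet series. For generic $s$ this yields the listed power-saving error terms, with the three regimes of $E_s(g)$ reflecting where poles of the relevant zeta factors cross the contour. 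At $s=\tfrac13$ something special happens: the pole of $\Psi$ (at $u = q^{-2/3}$, the metaplectic Eisenstein pole) sits exactly on the line of integration forced by $|G|^{-1/3}$, and the residue of the \emph{dual} (functional-equation) sum exactly cancels the naive main term of the principal sum, leaving a single clean pole whose residue gives $C_q(g+B_q)$ with only an $O(1)$ remainder. This cancellation — the analogue of the $s=\tfrac12$ Lindelöf transition — is the heart of the theorem.

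The main obstacle is precisely this last cancellation at $s=\tfrac13$: establishing the exact meromorphic continuation of the Dirichlet series of cubic Gauss sums past its first pole, identifying the residue there with enough precision to see it cancel against the principal term, and controlling the resulting dual sum well enough to get a genuine $O(1)$ (absolute, $q$-independent) error rather than a power of $g$. This requires the full strength of the metaplectic theory — the functional equation relating $\Psi(u;G)$ to a dual sum with a Gauss-sum factor, plus Patterson-type evaluations of the residual Eisenstein contributions — together with careful uniformity in $q$ of all the Euler-product manipulations. For $s\neq\tfrac13$ the analogous but easier input (continuation up to, but not across, the first pole, plus convexity/Lindelöf-type bounds on the Gauss-sum series on vertical lines) suffices, and the error terms are then a matter of optimizing the contour and bookkeeping the pole of $\zeta_q(3s)$ and the secondary poles responsible for $E_s(g)$.
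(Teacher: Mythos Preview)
Your broad architecture is right and matches the paper: split via the approximate functional equation into a principal sum and a dual sum, extract the diagonal (cube) contribution for the main term, and exploit a cancellation between a secondary piece of the principal sum and the leading piece of the dual sum, with the metaplectic (Kubota--Hoffstein--Patterson) input controlling the dual side. But your description of \emph{where} the cubic Gauss sums enter is confused, and this matters.

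After swapping sums in the principal part you obtain $\sum_{F\in\cH(3g+1)}\chi_f(F)$, which by reciprocity is a plain cubic character sum. Its generating series is $\cL(u,\chi_f)/\cL(u^2,\overline{\chi}_f)$, an ordinary $L$-function ratio; for non-cube $f$ one bounds it by Lindel\"of and a lower bound for $\cL(u^2,\overline{\chi}_f)$ on $|u|=q^{-1/2}$, exactly as in the paper's Lemma~\ref{lemma-Lindelof}. This is \emph{not} the Kubota--Patterson series and no metaplectic theory is used here. The Gauss sums enter only through the root number $\omega(\chi_F)=\overline{\epsilon(\chi_3^{\deg F})}\,G(\chi_F)/|F|^{1/2}$ in the dual sum: after swapping, the dual becomes $\sum_f |f|^{s-1}\sum_F G(f,F)$, and it is this inner sum over $F$ that is governed by Proposition~\ref{DFLProp} (the Hoffstein--Patterson residue). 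Your sketch merges these two mechanisms into one ``$\Psi(u;G)$'', which obscures the actual argument and misplaces the pole (the relevant pole in the $F$-series $\sum_F G(f,F)u^{\deg F}$ is at $u=q^{-4/3}$, not $q^{-2/3}$).

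Two further points you should make explicit. First, the cancellation is between the \emph{secondary} term $C_q\,q^{(1-3s)A}/(1-q^{3s-1})$ of the principal sum and the \emph{main} term $-C_q\,q^{(1-3s)A}/(1-q^{3s-1})$ of the dual sum; the surviving main term $M_q(s)$ (or $C_q(g+B_q)$ at $s=\tfrac13$) comes from the diagonal of the principal sum. Second, the stated error exponents arise from a specific choice of split point $A=\lfloor 3g/5\rfloor$ balancing the principal off-diagonal against the dual error; without naming this optimization your sketch does not explain the shape of the error terms or the $E_s(g)$ trichotomy. Finally, to push the dual computation through you need to average Gauss sums \emph{weighted by an Euler product} (the $\prod_{P\mid f}(1+|P|^{-1})^{-1}$ arising from the square-free sieve), which requires an extension of the basic Gauss-sum average (Proposition~\ref{DFLPropExt} in the paper); this step is absent from your outline.
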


\begin{cor} \label{coro} Let  $q\equiv 1 \bmod{6}$ be a fixed prime power. For  $\frac13 < s < 1$, and $g \rightarrow \infty$, we have 
\begin{align*}
\frac{1}{|\fH(3g)|} \sum_{\chi\in \fH(3g)} L(s,\chi) &\sim M_q(s) .
\end{align*}
For $s= \frac13$, and $g \rightarrow \infty$, we have 
 \begin{align*}
\frac{1}{|\fH(3g)|} \sum_{\chi\in\fH(3g)} L({\textstyle{\frac13}},\chi) &\sim C_q  \, g.
\end{align*}
\end{cor}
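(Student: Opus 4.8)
The plan is to obtain Corollary~\ref{coro} as an immediate consequence of Theorem~\ref{MainThm}: with $q$ fixed and $s$ fixed in the stated ranges, I will check that every error term in Theorem~\ref{MainThm} is $o(1)$ relative to the displayed main term, and that the main term itself does not vanish, so that the relation $\sim$ is meaningful.

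For $\tfrac13<s<1$ I would first note that $M_q(s)\ne 0$. Indeed, with $q$ a fixed prime power, $\zeta_q(3s)=(1-q^{1-3s})^{-1}$ is finite and nonzero for real $s\ne\tfrac13$, while the Euler product $\prod_P\bigl(1-|P|^{-3s}(|P|+1)^{-1}\bigr)$ converges absolutely for $s>0$ (each term is $\le|P|^{-1-3s}$ and $\sum_P|P|^{-1-3s}<\infty$) and every factor is strictly positive because $|P|\ge q\ge2$ forces $|P|^{-3s}(|P|+1)^{-1}<1$. Next I would verify that each error term tends to $0$. Choosing $\epsilon$ with $0<\epsilon<\min\bigl(6s-1,\tfrac{1+9s}{15}\bigr)$ (possible since $s>\tfrac13$), the first term $q^{\frac3{10}(1-6s+\epsilon)g}$ and the second term $q^{-\frac15(1+9s-15\epsilon)g+2-2s}$ both have a negative coefficient of $g$ in the exponent (the summand $2-2s$ being a bounded constant), hence decay exponentially. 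For the third term $q^{(1-3s)g}E_s(g)$ I would split into cases: for $\tfrac13<s<\tfrac23$ it equals $q^{(1-3s)g}\to0$; for $s=\tfrac23$ it equals $q^{-g}g^2\to0$; and for $\tfrac23<s<1$ it equals $g\,q^{\left((1-3s)+\frac65(3s-2)\right)g}=g\,q^{\frac{3s-7}{5}g}\to0$ since $3s-7<0$. Thus the average equals $M_q(s)+o(1)\sim M_q(s)$.

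For $s=\tfrac13$, Theorem~\ref{MainThm} says the average equals $C_q(g+B_q)+O(1)$ with an absolute $O(1)$ for $g\ge2$. Since $C_q=\prod_P\bigl(1-|P|^{-1}(|P|+1)^{-1}\bigr)$ is a convergent product (terms $\le|P|^{-2}$) of strictly positive factors, it is a positive constant; with $q$, $C_q$, $B_q$ all fixed, $C_q(g+B_q)+O(1)=C_qg\bigl(1+O(1/g)\bigr)$, which is asymptotic to $C_qg$ as $g\to\infty$.

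I do not expect a genuine obstacle: all the substance sits in Theorem~\ref{MainThm}, and the corollary is bookkeeping. The most delicate point is the third error term when $s>\tfrac23$, where one must combine the weight factor $q^{(1-3s)g}$ with the growth of $E_s(g)$ and confirm the resulting exponent $\tfrac{3s-7}{5}$ remains negative throughout $(\tfrac23,1)$; and one should keep in mind that because $q$ is held fixed, the $q$-dependence of $M_q(s)$, $C_q$, and $B_q$ causes no trouble in the limit $g\to\infty$.
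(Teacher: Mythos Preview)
Your proposal is correct and matches the paper's treatment: the corollary is stated immediately after Theorem~\ref{MainThm} with no separate proof, as it follows by the very bookkeeping you carry out (checking each error term is $o(1)$ for $s>\tfrac13$ and noting $C_q(g+B_q)+O(1)\sim C_q g$ at $s=\tfrac13$). Your verification of the exponent $\tfrac{3s-7}{5}$ in the range $\tfrac23<s<1$ and of the nonvanishing of $M_q(s)$ and $C_q$ is exactly what is needed.
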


To our knowledge, this is the first result in the literature computing the first moment at $s=\tfrac13$ for any family of cubic Dirichlet characters, over function fields or number fields. The fact that there is a ``transition" at $s=\frac13$ for the size of the moments of $L(s, \chi)$ for cubic characters $\chi$ (and at $s=\frac{1}{\ell}$ for  the moments of $L(s, \chi)$ for characters of order $\ell$) was predicted in \cite{FHL}, but without computing those moments.
Based on the random matrix model of Section \ref{RMT-model}, we would also predict the same transition for the first moment at $s=\tfrac{1}{\ell}$ for  families of characters of order $\ell$. We remark that there  is no hope to prove such a result for $\ell > 3$ with the current knowledge, as only very partial results are known for the residues of the Dirichlet series of $\ell$-th order Gauss sums, and the explicit computation of those residues is crucial for the proof of Theorem \ref{MainThm}.
 

\begin{rem} 
When $s=\frac12$, we recover a result similar to the main results of \cite{DFL}, but for a different family, as they considered the full families of cubic characters over $\F_q(t)$. 
 The full families are significantly more challenging that the thin family of \eqref{fHg}: in the non-Kummer case, the relevant Gauss sums are not defined  over $F_q(t)$, which leads to unbalanced sums between $\F_q(t)$ and $\F_{q^2}(t)$, and in the Kummer case, the family is ``too large". In both cases, getting an error term good enough to see the cancellation between the principal and the dual sum is much more challenging. In \cite{DFL}, the authors uncover a similar cancellation of the secondary term of the principal sum and the dual sum at for the moment at $s=\frac12$ (compare Propositions \ref{PrincComp} and \ref{DualComp}), but the cancellation occurred inside the error term, and then computing the moment at $s=\frac13$ for the full families is much more challenging.

\end{rem}


Comparing Theorem \ref{MainThm} to \eqref{transition}, we see a similar phenomenon occurring, except with a transition at $s=\frac{1}{3}$. When $s>\frac{1}{3}$, the error term decays with $g$ and we get an explicit constant. At $s=\frac{1}{3}$, the error term stops decaying, but the main term has a pole, resulting in a contribution of roughly a constant times one-third of the genus (since the characters in $\fH(3g)$ have genus  $3g$).  When $s<\frac{1}{3}$, the error term begins growing exponentially, making the main term no longer a main term.

The main technique in proving Theorem \ref{MainThm} is to use the approximate functional equation to write the $L$-function as a principal sum and a dual sum, and we average each of them over the family. In typical applications where one computes the moments at $s=\frac{1}{2}$, the main term will come from the principal sum and the oscillations of the sign of the functional equation will make the dual sum smaller. 

In fact, for $s>\frac{1}{3}$, we find that the principal sum has $M_q(s)$ as a main term as well as a secondary term that decays. However, this secondary term has a pole at $s=\frac{1}{3}$ and starts growing exponentially for $s<\frac{1}{3}$. While for the dual sum we find a main term which is exactly the negative of the secondary term of the principal sum. Analyzing the poles at $s=\frac{1}{3}$ of each of the principal and dual sum then yields the result for $s=\frac{1}{3}$.

The principal sum is a straightforward double character sum and we apply standard number theory techniques to compute it. The dual sum comes with an extra factor due to the sign of the functional equation, and we then need to compute an  average of cubic Gauss sums. This relies on the deep connection between Dirichlet series of cubic Gauss sums and  metaplectic Eisenstein series first introduced by Kubota \cite{Ku1969, Ku1971}. Explicit computations for the residues of the Dirichlet series where then done by Patterson \cite{P1978} (see also \cite{KP1984}), and over function fields, by Hoffstein \cite{Hoffstein} and Patterson \cite{P}. Some precise  formulas that are needed to evaluate our averages of cubic Gauss sums were developed in \cite{DFL}, building on \cite{Hoffstein, P}. We will extend the main result of \cite{DFL} in order to compute averages of Gauss sums weighted by Euler products (see Proposition \ref{DFLPropExt}).



Finally, we would like to bring attention to the transition of $E_s(g)$ at $s=\frac{2}{3}$. This is completely analogous with what happens to the main term at $s=\frac{1}{3}$ and is not too surprising since the functional equation relates the value at $\frac{1}{3}$ to the value at $\frac{2}{3}$. With this, it seems reasonable that the true behaviour of $E_{\frac{2}{3}}(g)$ should be a constant times $g$, and not $g^2$. However, it is not immediately clear what to expect for the true behaviour beyond $\frac{2}{3}$.

It is natural to ask if Theorem \ref{MainThm} would hold (unconditionally) over number fields, i.e averaging over a similar thin family of cubic characters over $\Q(\xi_3)$, where $\xi_3$ is a primitive third root of unity, and this is studied in the forthcoming work of Hamdar. 

\subsection{Random Matrix Model} \label{RMT-model}

For statistics at $s=\frac{1}{2}$, the $q$-dependence on the right-hand side of \eqref{FrobDef} disappears (when $\delta_{\chi}=1$), and
\begin{align*}
    L(\textstyle{\frac12},\chi) = \det(1-\Theta_\chi).
\end{align*}
That is, we may apply the Katz-Sarnak \cite{KaS} philosophy which states that the Frobenii of a family of $L$-functions should equidistribute in some compact matrix Lie group.

Specifically, if $f$ is any continuous class function and $\mathcal{F}_g$ is a ``nice'' family of $L$-functions of fixed genus $g$, then the Katz-Sarnak philosophy predicts that there is some compact matrix Lie group, $G_g$\footnote{ $G_g$ will be a subgroup of $U(g)$ and will typically be one of $U(g)$, $USp(g)$, $SO(g)$, $SO_{even}(g)$, $SO_{odd}(g)$.}, such that
\begin{align}\label{KSlim}
    \lim_{q\to\infty} \frac{1}{|\mathcal{F}_g|} \sum_{L\in \mathcal{F}_g} f(\Theta_L) = (1+o(1))\int_{G_g} f(U)dU.
\end{align}
where $dU$ is the Haar measure and the $o(1)$ term vanishes as $g$ tends to infinity.

Applying this with the continuous class function $f(U) = \det(1-U)^k$ allows us to predict that the moments of $L$-functions in ``nice'' families at $s=\frac{1}{2}$ should behave like the moments of the characteristic polynomial of a matrix in a compact matrix Lie group at $1$, and
\begin{align*}
    \lim_{q\to\infty} \frac{1}{|\mathcal{F}_g|} \sum_{L\in \mathcal{F}_g} L({\textstyle{\frac12}})^k = (1+o(1))\int_{G_g} \det(1-U)^k dU.
\end{align*}

This framework can further help to explain the recipe in \cite{CFKRS} and the ensuing conjecture in \eqref{QuadMome}. That is, it is known that the Frobenii attached to quadratic characters are symplectic and that
$$\int_{USp(2g)} \det(1-U)^k dU = Q_k(g)$$
for some explicit polynomials of degree $\frac{k(k+1)}{2}$, which coincides with the conjecture of Andrade and Keating \cite{AK}.

Many results (\cite{BCDGL,CP,DG,M3,M1}) suggest that the compact matrix Lie group attached to cubic characters is the group of unitary matrices. This is further reinforced by \eqref{DFLThm} and Theorem \ref{MainThm} with the observation that
$$\lim_{q\to\infty} A_q = \lim_{q\to\infty} M({\textstyle{\frac12}}) = \int_{U(g)} \det(1-U)du = 1.$$

However, this framework can not, a priori, help to explain the transition term at $s=\frac{1}{3}$ since the right hand side of
\begin{align}\label{Lat1/3}
L({\textstyle{\frac13}},\chi) = \det(1-q^{\frac16}\Theta_\chi)
\end{align}
is not independent of $q$ and, hence, can not be identified with a \textit{single} continuous class function $f$ of the unitaries as $q$ grows.

\subsection{Weighted Random Matrix Model}

An important set of continuous class functions are the mixed trace functions
$$P_{\lambda}(U) := \prod_{j=1}^{\infty} \Tr(U^j)^{\lambda_j}$$
where $\lambda = 1^{\lambda_1}2^{\lambda_2}\cdots$ is the partition consisting of $\lambda_1$ ones, $\lambda_2$ twos, etc. These form a basis for continuous class functions of the unitary matrices and so it would be enough to prove \eqref{KSlim} for $f=P_{\lambda}$ for all $\lambda$.

Towards this, the second author in \cite{M2} proved a partial result towards \eqref{KSlim}  
for some family of cubic characters $\mathcal{F}_3(N)$. Specifically, for all $\lambda$ such that $|\lambda| := \sum j\lambda_j < N$ we have
\begin{align}\label{UnitaryFamily}
    \lim_{q\to\infty} \frac{1}{|\mathcal{F}_3(N)|} \sum_{\chi\in \mathcal{F}_3(N)} P_{\lambda}(\Theta_\chi) = \int_{U(N)} P_\lambda(U) dU.
\end{align}

One unsatisfying aspect of \eqref{UnitaryFamily} is that the right hand side is always $0$. The main goal of \cite{M2} was to find an appropriate normalization for the left hand side to be non-zero and see how that could effect the random matrix interpretation. Indeed, it was shown that for $|\lambda|<\frac{3N}{4}$ \cite[Theorem 1.1 with $r=3$]{M2}
\begin{align}\label{WeightedUnitaryMatrix}
    \lim_{q\to\infty} \frac{1}{|\mathcal{F}_3(N)|} \sum_{\chi\in\mathcal{F}_3(N)} q^{\frac{|\lambda|}{6}} P_{\lambda}(\Theta_\chi) = \int_{U(N)} P_{\lambda}(U) \overline{\det(1-\wedge^3U)}dU
\end{align}
where
\begin{align*}
\det(1-\wedge^3U) := \prod_{1 \leq i_1 < i_2 <  i_3 \leq N} \left( 1 - x_{i_1} x_{i_2} x_{i_3} \right)
\end{align*}
and the $x_i$ are the eigenvalues of $U$.

While extending \eqref{UnitaryFamily} to all $\lambda$ would give us a result like \eqref{KSlim} it is not immediately clear what extending \eqref{WeightedUnitaryMatrix} for all $\lambda$ would give us. One possible interpretation is that, by the definition of $P_\lambda$, we have $q^{\frac{|\lambda|}{6}}P_{\lambda}(\Theta_\chi) = P_{\lambda}(q^{\frac{1}{6}}\Theta_\chi)$ and so an extension of \eqref{WeightedUnitaryMatrix} would imply the following conjecture.

\begin{conj}\label{Conjecture}
Let $\mathcal{F}(N)$ be a ``nice" family of cubic characters defined over $\mathbb{F}_q[T]$ with conductor of degree $N$, and let $f$ be a continuous class function. Then
$$\lim_{q\to\infty} \frac{1}{|\mathcal{F}(N)|}\sum_{\chi \in \mathcal{F}(N)} f(q^\frac{1}{6}\Theta_\chi) = (1+o(1))\int_{U(N)} f(U)\overline{\det(1-\wedge^3U)}dU$$
where the $o(1)$ tends to $0$ as $N$ tends to infinity.
\end{conj}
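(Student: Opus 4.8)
\textbf{Proof proposal for Theorem \ref{MainThm}.}

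The plan is to use the approximate functional equation for $L(s,\chi)$, reducing the moment to an average of a principal sum $\sum_F \chi_F$-weighted terms of length roughly $q^{3sg}$ plus a dual sum of length roughly $q^{3(1-s)g}$ carrying the root number $\omega(\chi)$. Concretely, writing $L(s,\chi)=\sum_{\deg F \leq 3g}\chi(F)|F|^{-s}$ after suppressing the functional-equation piece, or more precisely splitting $L(s,\chi)=\sum_{\deg F < n_1}\chi(F)|F|^{-s} + \omega(\chi)q^{(\frac12-s)(3g)}\sum_{\deg F < n_2}\overline{\chi}(F)|F|^{s-1}$ with $n_1+n_2 = 3g$ (or $3g+1$), one averages each sum separately over $\chi\in\fH(3g)$. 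The first task is the \emph{principal sum}: after expanding $\chi_F(G) = \left(\frac{G}{F}\right)_3$ and summing over square-free $F$ of the appropriate degree, reciprocity for the cubic residue symbol and the usual sieving identity $\sum_{F \text{ sq-free}} = \sum_{AB^2}\mu(\cdot)$ turn the $F$-average into an Euler product; the diagonal $G = \square^3$ (cubes) contributes the main term. A careful extraction, using a Perron-type contour integral in the generating variable to localize the degree constraints, should yield $M_q(s)$ plus a secondary term of size $q^{(1-3s)g}$ times an Euler product, with a pole at $s=\tfrac13$; this is Proposition \ref{PrincComp}. The error terms of sizes $q^{\frac{3}{10}(1-6s+\epsilon)g}$ and $q^{-\frac15(1+9s-15\varepsilon)g+2-2s}$ will come from bounding the off-diagonal and the contribution of large $B$ in the square-free sieve, optimizing the cutoff $n_1$.

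The second, harder task is the \emph{dual sum}, which after expanding becomes an average over square-free $F$ of $\omega(\chi_F)\overline{\chi_F}(G)$, and the root number of $\chi_F$ is essentially a normalized cubic Gauss sum $g(\chi_F)$. Thus one needs an asymptotic for $\sum_{F}g(\chi_F)\overline{\chi_F}(G)$ over square-free $F$ in a box, weighted by $|F|^{s-1}$ and, after the sieve, by an Euler product in the auxiliary variable. This is where the connection to metaplectic Eisenstein series enters: the Dirichlet series $\sum_F g(\chi_F, G) |F|^{-w}$ (a cubic analogue of a Kubota Dirichlet series) has meromorphic continuation with known polar behavior, the residues having been computed by Patterson, Hoffstein, and refined over $\F_q[T]$ in \cite{DFL}. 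I would invoke Proposition \ref{DFLPropExt} — the promised extension of the main result of \cite{DFL} to Gauss sum averages twisted by Euler products — to extract the main term of the dual sum. The key point, stated in the introduction, is that this main term is \emph{exactly} the negative of the secondary term of the principal sum (Propositions \ref{PrincComp} vs \ref{DualComp}); so for $s\neq\tfrac13$ the two combine into an error term $q^{(1-3s)g}E_s(g)$, the factor $E_s(g)$ recording the next pole (at $s=\tfrac23$, i.e. $w$ near the edge of the critical strip for the dual Dirichlet series) via the $g^2$ at $s=\tfrac23$ and the growing term for $s>\tfrac23$.

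For the transition value $s=\tfrac13$, the plan is \emph{not} to take $s\to\tfrac13$ in the $s\neq\tfrac13$ formula — the error terms blow up — but to redo the analysis directly at $s=\tfrac13$. Here $q^{(\frac12-s)(3g)} = q^{g/2}$ exactly cancels the $q$-power in the Gauss sum normalization, the principal and dual sums have the \emph{same} length $\sim q^g$, and each individually has a main term with a simple pole that, at $s=\tfrac13$, manifests as a term linear in $g$. One computes the Laurent expansion at $s=\tfrac13$ of the secondary term of the principal sum: the residue gives $C_q\cdot g$, and the constant term (together with the analogous expansion from the dual sum and from the derivative/log-derivative of the various Euler products and $\zeta_q$) assembles into $C_q B_q$ with $B_q$ as in \eqref{def-Bq}. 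Since the main terms (the $M_q$-type poles) cancel between principal and dual sums, what survives is precisely $C_q(g+B_q)$, and the error is now a genuine $O(1)$, absolute for $g\geq 2$ because the Eisenstein-series residue formula and the contour shifts no longer interact with a moving pole.

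\textbf{Main obstacle.} The crux is the dual sum: obtaining an asymptotic for the Euler-product-twisted average of cubic Gauss sums with an error term sharp enough to detect the cancellation against the principal sum's secondary term — in particular controlling the analytic continuation and residues of the relevant Kubota-type Dirichlet series over $\F_q[T]$ uniformly in the twisting polynomial $G$ and in $q$. Establishing Proposition \ref{DFLPropExt} (the Euler-product-weighted extension of \cite{DFL}) and then correctly matching constants at $s=\tfrac13$, where several poles and logarithmic derivatives of Euler products collide, is the delicate heart of the argument.
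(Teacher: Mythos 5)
Your proposal is a sketch of the proof of Theorem \ref{MainThm}, but the statement at hand is Conjecture \ref{Conjecture}, which is a different (and strictly stronger) assertion. The Conjecture is not proven in the paper: it is stated as a conjecture for general ``nice'' families and arbitrary continuous class functions $f$, and the paper only verifies the single instance $\mathcal{F}(N)=\fH(3g)$, $f(U)=\det(1-U)$ (Theorem \ref{RMTThm}). Your outline of the approximate functional equation, the principal/dual decomposition, the Kubota--Patterson--Hoffstein input for the averages of cubic Gauss sums, and the cancellation of the secondary term at $s=\tfrac13$ is a reasonable account of how Theorem \ref{MainThm} is established, and that theorem supplies the arithmetic side of the verified instance, namely $\lim_{q\to\infty}|\fH(3g)|^{-1}\sum_{\chi}L(\tfrac13,\chi)=g+O(1)$. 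But this is only half of what even that special case requires.

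The missing half is the random-matrix side: one must compute $\int_{U(3g)}\det(1-U)\overline{\det(1-\wedge^3U)}\,dU$ and check that it equals $g+1$, so that the two sides of the Conjecture actually match. The paper does this by expanding $\det(1-U)=\sum_{|\lambda|\leq N}\frac{(-1)^{\ell(\lambda)}}{z_\lambda}P_\lambda(U)$ via the Newton identities, expanding $\det(1-\wedge^3U)$ in the same mixed-trace basis (following Section 5.2 of \cite{M2}), applying the Diaconis--Evans orthogonality relation $\int_{U(N)}P_\lambda(U)\overline{P_\mu(U)}\,dU=\delta_{\lambda\mu}z_\lambda$ valid for $\min(|\lambda|,|\mu|)\leq N$, and then summing the surviving coefficients with an exponential generating function that collapses to $\sum_{3n\leq N}1=\lfloor N/3\rfloor+1$. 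None of this appears in your proposal. Finally, be aware that no argument along your lines could establish the Conjecture in full generality: the identity $f(q^{1/6}\Theta_\chi)=L(\tfrac13,\chi)$ for $f(U)=\det(1-U)$ is precisely what makes $L$-function techniques applicable, whereas a general continuous class function is not of this form (nor in the span of the $P_\lambda$ with $|\lambda|<3N/4$ covered by \cite{M2}), and the Conjecture remains open.
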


Notice that the result of \cite{M2} implies that Conjecture \ref{Conjecture} is true for the family of $\mathcal{F}_3(N)$ and all continuous class functions in the span of $\{P_{\lambda} : |\lambda|<3N/4\}$. Similar techniques would prove similar results for the family $\fH(3g)$ as well. However, we see by \eqref{Lat1/3} that
$$L({\textstyle{\frac13}},\chi) = f(q^{\frac{1}{6}}\Theta_\chi) $$
where $f(U)=\det(1-U)$ is not in the span of $\{P_{\lambda} : |\lambda|<3N/4\}$. Regardless of this, we can use Theorem \ref{MainThm} to prove Conjecture \ref{Conjecture} in this case.

\begin{thm}\label{RMTThm}
    Conjecture \ref{Conjecture} is true with the family $\fH(3g)$ when $g \geq 2$, and the continuous class function $f(U) = \det(1-U)$.  Specifically,
    \begin{align*}
    \lim_{q\to\infty}  \frac{1}{|\fH(3g)|} \sum_{\chi\in \fH(3g)} L({\textstyle{\frac13}},\chi)  &= g + O(1), \\
    \end{align*}
    and \begin{align*}
    \int_{U(3g)} \deg(1-U) \overline{\det(1-\wedge^3U)} dU &= g + 1.
    \end{align*}
\end{thm}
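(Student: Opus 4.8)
The plan is to prove the two stated evaluations separately: the $L$-function side follows almost directly from Theorem~\ref{MainThm}, while the matrix-integral side is a finite computation in random matrix theory. For the first claim, I would simply invoke the $s=\frac13$ case of Theorem~\ref{MainThm}, which gives
$$\frac{1}{|\fH(3g)|} \sum_{\chi\in\fH(3g)} L({\textstyle{\frac13}},\chi) = C_q(g + B_q) + O(1)$$
with an absolute $O(1)$ for $g \geq 2$. Taking $q \to \infty$, one needs $\lim_{q\to\infty} C_q = 1$ and that $C_q B_q$ stays bounded (indeed tends to a constant). The first is immediate from $C_q = \prod_P(1 - \frac{1}{|P|(|P|+1)})$ since each local factor tends to $1$ and the product converges (the number of primes of degree $d$ is $\sim q^d/d$, so $\log C_q = O(1/q)$). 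The boundedness of $C_q B_q$ should follow from the definition \eqref{def-Bq}; modulo checking that, one concludes $\frac{1}{|\fH(3g)|}\sum L(\frac13,\chi) = g + O(1)$ as $q\to\infty$, which is the asserted limit.

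For the matrix integral, I would compute $\int_{U(N)} \det(1-U)\,\overline{\det(1-\wedge^3 U)}\,dU$ with $N = 3g$. Write both determinants as generating functions in the power sums: $\det(1-U) = \sum_{k\geq 0}(-1)^k e_k(x_1,\dots,x_N)$, so $\det(1-U) = \sum_k (-1)^k s_{(1^k)}(U)$ in terms of Schur functions, and $\overline{\det(1-\wedge^3 U)} = \sum_m (-1)^m \overline{e_m(\wedge^3 x)}$, where $e_m(\wedge^3 x)$ is the $m$-th elementary symmetric function in the $\binom{N}{3}$ products $x_{i_1}x_{i_2}x_{i_3}$. The key tool is the orthogonality of Schur functions over $U(N)$: $\int_{U(N)} s_\lambda(U)\overline{s_\mu(U)}\,dU = \delta_{\lambda\mu}$ for partitions $\lambda,\mu$ with at most $N$ parts. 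So I need the Schur expansion of $e_m(\wedge^3 x)$ and then to read off the coefficient of each $s_{(1^k)}$. Equivalently — and this is the cleaner route — one expands $\overline{\det(1-\wedge^3 U)}$ and notes that only terms which are (conjugates of) $s_{(1^k)}$, $0 \le k \le N$, survive pairing against $\det(1-U) = \sum_k (-1)^k s_{(1^k)}(U)$; the answer is $\sum_k \langle \det(1-\wedge^3 \cdot)\,,\, s_{(1^k)}\rangle$ up to signs, and one expects the bulk to telescope, leaving the clean value $g+1$.

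The main obstacle is identifying which plethystic pieces of $\det(1-\wedge^3 U)$ contribute. Concretely: $\wedge^3$ of the standard representation of $U(N)$ is irreducible with highest weight $(1,1,1,0,\dots)$, i.e. it is $s_{(1^3)}$; its elementary symmetric powers $e_m(\wedge^3 x)$ decompose into Schur functions via the plethysm $e_m[s_{(1^3)}] = e_m[e_3]$, whose Schur expansion is combinatorially intricate in general. However, I only need the components of shape $(1^k)$ (a single column), and for these the plethysm coefficients $\langle e_m[e_3], s_{(1^k)}\rangle$ are governed by a much simpler rule — essentially counting how $e_3$ powers can build a column — and I expect $e_m[e_3]$ to contain $s_{(1^{3m})}$ with coefficient $1$ and no other single-column shape, so that $\overline{\det(1-\wedge^3U)}$ contributes $\sum_{m\geq 0}(-1)^m \overline{s_{(1^{3m})}}$ among single-column terms. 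Pairing with $\det(1-U) = \sum_{k=0}^N (-1)^k s_{(1^k)}(U)$ and using $N = 3g$ then gives $\sum_{m\geq 0}(-1)^m(-1)^{3m}[3m \leq 3g] = \sum_{m=0}^{g} 1 = g+1$, since $(-1)^m(-1)^{3m} = (-1)^{4m} = 1$. I would double-check the single-column plethysm claim (e.g.\ via the dual Cauchy identity or a direct character computation for small $N$), as that is the one genuinely non-routine input; everything else is bookkeeping with Schur orthogonality.
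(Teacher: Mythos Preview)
Your proposal is correct. Part~1 (the $q\to\infty$ limit) is handled exactly as in the paper: invoke Theorem~\ref{MainThm} at $s=\tfrac13$, note $C_q\to 1$ and $B_q\to\tfrac23$ (the prime sums in \eqref{def-Bq} are $O(1/(q-1))$ by the prime polynomial theorem), and conclude.

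For the matrix integral you take a genuinely different route. The paper expands both determinants in the power-sum basis $P_\lambda$, invokes the Diaconis--Evans orthogonality $\int_{U(N)} P_\lambda \overline{P_\mu}\,dU=\delta_{\lambda\mu}z_\lambda$, and uses an explicit expansion of $\det(1-\wedge^3U)$ in the $P_\lambda$ from \cite{M2}; the resulting sum is packaged into an exponential generating function which collapses to $(1-x^3)^{-1}$, giving $\lfloor N/3\rfloor+1$. You instead work in the Schur basis, write $\det(1-U)=\sum_k(-1)^k s_{(1^k)}$, and reduce everything to the single plethysm coefficient $\langle e_m[e_3],\,s_{(1^{3m})}\rangle$. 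Your claim that this coefficient equals $1$ is correct, and once granted, your computation $\sum_{m=0}^g(-1)^{4m}=g+1$ is immediate. A clean way to justify the plethysm fact (which you flag but do not prove): since $e_3$ has odd degree, $\omega(e_m[e_3])=h_m[h_3]$, so $\langle e_m[e_3],s_{(1^{3m})}\rangle=\langle h_m[h_3],s_{(3m)}\rangle$; the latter is the one-variable specialization $h_m[h_3](x_1,0,\dots)/x_1^{3m}=h_m(x_1^3)/x_1^{3m}=1$. Your approach is shorter and more conceptual once this identity is in hand; the paper's approach is more computational but entirely self-contained and connects directly to the framework of \cite{M2}.
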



\subsection{Structure of the paper} We present in Section \ref{background} some background on L-functions and cubic characters over function fields, and we describe the thin family of cubic characters that we are using. In particular, we generalize the approximate functional equation of \cite{DFL} from $s=\frac12$ to general $s$. Using the approximate functional equation, the average value can be written as a principal sum and a dual sum as in \eqref{AFEApplied}. We compute the principal sum in Section \ref{section-principal}, the dual sum in Section \ref{section-dual-sum}, and using those estimates, the proof of theorems \ref{MainThm} and \ref{RMTThm} are given in Section \ref{proof-thms}. 

\textbf{Acknowledgements}: The authors would like to thank the anonymous referee for several suggestions and corrections that
improved the exposition of the paper. CD was supported by the Natural Sciences and Engineering Research Council
of Canada [DG-155635-2019] and by 
the Fonds de recherche du Qu\'ebec Nature et technologies [Projet de recherche en \'equipe
300951]. PM was supported by the grant KAW 2019.0517 from the Knut and Alice Wallenberg Foundation. 

\section{Background on $L$-functions} \label{background}

\subsection{Cubic characters over $\F_q[T]$} \label{characters}

We denote by $\cM$ the set of monic polynomials in $\F_q[T]$, and by $\cM_d$, repectively $\cM_{\leq d}$,  the set of monic polynomials in $\F_q[T]$ of degree $d$, respectively of degree $\leq d$.

Let $q$ be a prime power with $q \equiv 1 \mod 6$. We fix once and for all an isomorphism $\Omega$ from the cube roots of unity in $\F_q^*$ to $\mu_3 = \{ 1, \xi_3, \xi_3^2 \}$, the cube roots of unity in $\C^*$, where $\xi_3 = e^{2 \pi i/3}$. We then denote by $\chi_3$ the cubic residue symbol of $\F_q^*$ given by 
\begin{align} \label{def-chi3}
\chi_3(a) = \Omega(a^{(q-1)/3}), \;\; \text{for all $a \in \F_q^*$}.
\end{align}
For each prime $P \in \F_q[T]$, we define the cubic residue symbol of conductor $P$
$$\chi_{P} : \F_q[T]/(P) \rightarrow \mu_3
$$
as follows: for $a \in \F_q[T]$, if $P \mid a$, then $\chi_P(a)=0$, and otherwise, $\chi_P(a) = \alpha$, where $\alpha \in \mu_3$ is such that
$$a^{\frac{q^{\deg P}-1}{3}} \equiv \Omega^{-1}(\alpha) \bmod P.$$

There are then 2 cubic characters of conductor $P$, $\chi_P$ and $\chi_P^2 = \overline{\chi}_P$.
We extend the definition to $F \in \cM$ by multiplicativity. Writing $F = P_1^{e_1} \dots P_s^{e_s}$ where the $P_i$ are distinct primes and the $e_i$ are positive integers, we define
\begin{align} \label{by-mult}
\chi_{F}(a)  = \chi_{P_1}(a)^{e_1} \dots \chi_{P_s}(a)^{e_s}.
\end{align}
Then, $\chi_F$ is a cubic character of conductor $\rad(F) = P_1 \dots P_s$. Conversely, all the primitive cubic characters of conductor $P_1 \dots P_s$ are given by
$\chi_{P_1}^{e_1} \dots \chi_{P_s}^{e_s}$ with $1 \leq e_i \leq 2$, and there are $2^s$ such characters.

We say that a cubic character $\chi$ is even if $\chi \vert_{\F_q^*} = \chi_0$, the trivial character, and that  $\chi$ is odd if $\chi \vert_{\F_q^*} = \chi_3$ or $\chi_3^2$. We define
$$
\delta_\chi = \begin{cases} 1 & \text{when $\chi$ is odd} \\
0 & \text{when $\chi$ is even.} 
\end{cases}$$

The best classification of cubic characters is by genus. 
From the Riemann-Hurwitz formula (Theorem 7.16 of \cite{Rosen}), we compute
$$g = \deg{\cond(\chi)} - 2 + \delta_\chi,$$
and we denote by 
$\fM_{3}(g)$ the set of primitive cubic characters over $\F_q[T]$ of genus $g$.
$\fM_{3}(g)$  is naturally divided in three disjoint subsets $\fM_{3}^0(g), \fM_{3}^1(g), \fM_{3}^2(g)$, depending on the restriction of $\chi$ over $\F_q$, and we define for $j=0,1,2$,
\begin{align} \label{Kummer-family}
\fM_{3}^j(g) &= \{\chi \in \fM_3(g) \;:\; \chi |_{\mathbb{F}_q^*} = \chi_3^j\} \end{align}
where we identify $\chi_3^0 = \chi_0$. In particular, we get
\begin{align*}
    \fM_3(g) = \fM_3^0(g) \cup \fM_3^1(g) \cup \fM_3^2(g).
\end{align*}
Using the observation that ${q^n-1} \equiv {n(q-1)}\mod{3}$, we get that if $a\in \mathbb{F}^*_q$, then for any prime $P$,
$$a^{\frac{q^{\deg P}-1}{3}} = a^{\frac{ (q-1)}{3} \deg{P}} \equiv  \Omega^{-1}\left( \chi^{\deg P}_3(a)\right) \mod{P}.$$
Extending this multiplicatively, we find that $\chi_F|_{\mathbb{F}_q^*} = \chi_3^{\deg F}.$
Hence, if we define
$$\cM_3^j(d) := \{F\in \F_q[T] : F \mbox{ cube-free}, \deg{\rad(F)}=d, \deg F \equiv j \bmod{3}\},$$
where $\rad{(F)} = \prod_{P \mid F} P$, 
then we have 
\begin{align*} 
\fM_{3}^0(g)  &=  \{ \chi_F \; : \; F\in \cM_3^0(g+2) \}
\end{align*}
and for $j=1,2$, 
\begin{align*} 
\fM_{3}^j(g)  =  \{ \chi_F \; : \;  F  \in \cM_3^j(g+1) \}
\end{align*}
Let
$$\mathcal{H}(d) := \{F \in \mathbb{F}_q[T] : F \mbox{ square-free}, \deg F =d\}.$$
For $g\equiv 0 \bmod{3}$, let
$$\fH(g) := \{ \chi_F : F\in \cH(g+1)\} \subset \fM_3^1(g) \subset \fM_3(g) $$
while if $g\equiv 1 \bmod{3}$, let $$\fH(g) := \{ \chi_F : F\in \cH(g+2)\cup \cH(g+1) \} \subset \fM_3^0(g)\cup\fM_3^2(g) \subset \fM_3(g).$$
Somewhat surprisingly, if $g\equiv 2 \bmod{3}$, then we find that there are no elements of $\fM_3(g)$ with a square-free discriminant. Hence we set $\fH(g) = \emptyset$ in this case.

We see that there is a  natural bijection from $\fH(3g)$ to $\mathcal{H}(3g+1)$, which is reminiscent of the family usually considered for quadratic characters $\mathfrak{M}_2(2g)$ which comes with a natural bijection to $\mathcal{H}(2g+1)$. From this point of view,
the family $\fH(3g)$ is a ``natural" extension of the quadratic family.

As the functional equation of the $L$-functions depends on the parity of the character, it will be useful to distinguish them. Thus, we define
\begin{align*}
    \fH_e(g) := \{ \chi \in \fH(g) : \chi \mbox{ is even} \} \quad \quad \mbox{ and } \quad \quad 
    \fH_o(g) := \{ \chi \in \fH(g) : \chi \mbox{ is odd} \}.
\end{align*}
Notice that the even characters are exactly those in $\fM_3^0(g)$ so that if $g\not\equiv 1 \bmod{3}$, $\fH_e(g) = \emptyset$.

Now, the standard square-free sieve tells us that for $d\geq 1$
\begin{align} \label{SF-poly}
|\cH(d)| = \frac{q^d}{\zeta_q(2)} = q^d-q^{d-1}, \end{align}
where $\zeta_q(s)$ is defined by \eqref{def-zeta}, which gives
\begin{align*}
    |\fH_e(g)| = \begin{cases} 0 & g \not \equiv 1 \bmod{3} \\ |\cH(g+2)| & g \equiv 1 \bmod{3} \end{cases} = \begin{cases} 0 & g \not \equiv 1 \bmod{3} \\ q^{g+2}-q^{g+1} & g \equiv 1 \bmod{3} \end{cases}
\end{align*}
and
\begin{align*}
    |\fH_o(g)| = \begin{cases} |\cH(g+1)| & g  \not \equiv 2 \bmod{3} \\0 & g \equiv 2 \bmod{3} \end{cases} = \begin{cases} q^{g+1}-q^g & g  \not\equiv 2 \bmod{3}  \\ 0 & g \equiv 2 \bmod{3} \end{cases}
\end{align*}
from which we may conclude that
$$|\fH(g)| = |\fH_e(g)|+|\fH_o(g)| = \begin{cases}  q^{g+1}-q^g & g \equiv 0 \mod{3} \\ q^{g+2}-q^g & g \equiv 1 \mod{3} \\ 0 & g\equiv 2 \mod{3} \end{cases}.$$

\subsection{Functional Equation}

The affine zeta function over $\F_q[T]$ is defined by
\begin{align} \label{def-Z}
\cZ_q(u) = \sum_{f \in \cM} u^{\deg{f}} = \prod_P \left( 1 - u^{\deg{P}} \right)^{-1} = \frac{1}{(1 - qu)} 
\end{align}
for $|u|<q^{-1}$. The right-hand side provides an analytic continuation to the entire complex plane, with a simple pole at $u=q^{-1}$ with residue $-\tfrac{1}{q}$. We also define
\begin{align} \label{def-zeta}
\zeta_q(s) = \cZ_q(q^{-s}) = (1-q^{1-s})^{-1}. \end{align}
Replacing in \eqref{SF-poly}, we express the size of $\fH(g)$ in terms of values of $\zeta_q(s)$:
$$|\fH(g)| =  \begin{cases}  \frac{q^{g+1}}{\zeta_q(2)} & g \equiv 0 \mod{3} \\ \frac{q^{g+2}}{\zeta_q(3)} & g \equiv 1 \mod{3} \\ 0 & g\equiv 2 \mod{3}. \end{cases}$$
Let $\chi$ be a primitive cubic Dirichlet character as defined in Section \ref{characters}, and let $h \in \cM$ be its conductor. We define the $L$-function in the $u$-variable as
$$\cL(u,\chi) = \sum_{F\in \mathcal{M}} \chi(F) u^{\deg F}$$
so that $L(s,\chi) = \cL(q^{-s},\chi)$, where $L(s,\chi)$ is defined in \eqref{L-FuncDef}. If $\chi$ is even, we have that $\cL(1,\chi)=0$, and 
we define the completed $L$-function
\begin{align}\label{even-odd}
    \mathcal{L}_C(u,\chi) = \frac{\cL(u,\chi)}{(1-u)^{1-\delta_\chi}}.
\end{align}
Let $g$ be the genus of the character $\chi$.
It follows from the Weil conjectures \cite{W} that
$\mathcal{L}_C(u,\chi)$ is a polynomial of degree $g$ and it satisfies the functional equation
\begin{align}\label{FE}
    {\cL_C}(u,\chi) = \omega(\chi) (\sqrt{q}u)^g \mathcal{L}_C\left(\frac{1}{qu},\overline{\chi}\right)
\end{align}
where $\omega(\chi)$ is the sign of the functional equation. To give a formula for $\omega(\chi)$, we need to define the Gauss sums of characters over $\F_q[T]$.

We first start with Gauss sums for characters over $\F_q^*$. If $\chi$ is a non-trivial character of $\F_q$, 
we define
\begin{align*}
\tau(\chi) &:= \sum_{a\in \mathbb{F}_q^*} \chi(a) e^{\frac{2\pi i tr_{\mathbb{F}_q/\mathbb{F}_p} (a)} {p}} .
\end{align*}
Then, $\tau(\overline{\chi}) = \overline{\tau(\chi)}$ and $|\tau(\chi)| = q^{1/2}$, and we denote the sign on the Gauss sum by
\begin{align*}
\epsilon(\chi)  &:=  q^{-1/2} \tau(\chi).
\end{align*}
We extend the definition to trivial characters by defining $\epsilon(\chi_0)=1$. 


To define the Gauss sums of general characters over $\F_q[T]$, we define the exponential over $\F_q(T)$ as follows:
for any $a \in \F_q((1/T))$, we have
$$
e_q(a) = e^{\frac{2\pi i tr_{\mathbb{F}_q/\mathbb{F}_p} (a_1)} {p}},
$$
where $a_1$ is the coefficient of $1/T$ in the Laurent expansion of $a$. We then have the usual properties: $e_q(a+b) = e_q(a) e_q(b)$ and $e_q(a)=1$ for $a \in \F_q[T]$. Also, if $a,b,h \in \F_q[T]$ with $a \equiv b \bmod h$, then $e_q(a/h) = e_q(b/h)$.
For $\chi$ a primitive cubic character of modulus $h$ over $\F_q[T]$, the Gauss sum of $\chi$ is
$$
G(\chi) = \sum_{a \bmod h} \chi(a) e_q\left( \frac{a}{h} \right).
$$
It is not hard to show that $G(\overline{\chi}) = \overline{G(\chi)}$ and $|G(\chi)| = q^{\deg{h}/2}$.

\begin{lem}{\cite[Corollary 2.3]{DFL}}  \label{DFLLem1} Let $\chi$ be a primitive cubic character of conductor $h$. Then,
$$\omega(\chi) = \overline{\epsilon(\chi_3^{\deg{h}})} \; \frac{G(\chi)}{q^{\deg{h}/2}}$$
where $\chi_3$ is the cubic character defined in \eqref{def-chi3}.
We then have
\begin{align*}
\omega(\chi) = \overline{\omega(\chi)} \quad \quad \mbox{ and } \quad \quad |\omega(\chi)|=1.
\end{align*}
\end{lem}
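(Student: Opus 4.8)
The plan is to prove this the standard way one computes the root number of a primitive Dirichlet $L$-function over $\F_q[T]$: read $\omega(\chi)$ off a single coefficient of the polynomial $\cL_C(u,\chi)$, and evaluate that coefficient by finite Fourier analysis over $\F_q[T]$. The one delicate point is to peel the restriction $\chi\vert_{\F_q^*}$ off the Gauss sum, so that the \emph{classical} normalisation $\epsilon(\chi_3^{\deg h})$, rather than all of $G(\chi)$, appears.

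First I would set $d=\deg h$ and write $\cL(u,\chi)=\sum_{n\ge 0}A_\chi(n)u^n$ with $A_\chi(n)=\sum_{f\in\cM_n}\chi(f)$. Sorting the monic $f$ of degree $n$ into residue classes modulo $h$ gives $A_\chi(n)=q^{n-d}\sum_{r\bmod h}\chi(r)=0$ for $n\ge d$, so $\cL_C(u,\chi)=\cL(u,\chi)/(1-u)^{1-\delta_\chi}$ is a polynomial of degree $g=d-2+\delta_\chi$ with constant term $A_\chi(0)=1$. Since $\cL_C(u,\overline\chi)$ has the complex-conjugate coefficients of $\cL_C(u,\chi)$ (note $\delta_{\overline\chi}=\delta_\chi$), comparing the coefficients of $u^0$ and $u^g$ on the two sides of the functional equation \eqref{FE} yields at once $\omega(\chi)\overline{\omega(\chi)}=1$ together with $\omega(\chi)=q^{-g/2}\cdot(\text{coefficient of }u^g\text{ in }\cL_C(u,\chi))$; and that coefficient equals $(-1)^{1-\delta_\chi}A_\chi(d-1)$ (for odd $\chi$ this is clear, and for even $\chi$ the factor $1-u$ moves the leading coefficient of $\cL_C$ to the coefficient $A_\chi(d-1)$ of $\cL$, with a sign). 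So everything reduces to evaluating $A_\chi(d-1)=\sum_{f\in\cM_{d-1}}\chi(f)$.

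For this I would expand $\chi(f)=G(\overline\chi)^{-1}\sum_{a\bmod h}\overline\chi(a)\,e_q(af/h)$ (valid for primitive $\chi$ and all $f$), interchange the sums, and evaluate the inner additive sum $\sum_{f\in\cM_{d-1}}e_q(af/h)$ by writing $f=T^{d-1}+(\text{lower order})$ and summing each lower coefficient of $f$ off: it equals $q^{d-1}e_q(aT^{d-1}/h)$ when the partial fraction $a/h$ is $O(T^{-d})$, i.e.\ when $a$ is a constant, and vanishes otherwise. Since $e_q(aT^{d-1}/h)=e^{2\pi i\,\Tr_{\F_q/\F_p}(a)/p}$ for $a\in\F_q$, only the constants $a\in\F_q^*$ survive, and using $\overline\chi\vert_{\F_q^*}=\overline{\chi_3^{d}}$ (from $\chi_F\vert_{\F_q^*}=\chi_3^{\deg F}$, applicable here because $\chi$ has square-free conductor $h$) the remaining $\F_q^*$-sum is $\tau(\overline{\chi_3^{d}})=q^{1/2}\overline{\epsilon(\chi_3^{d})}$ when $3\nmid d$ (that is, $\chi$ odd) and is $-1$ when $3\mid d$ (that is, $\chi$ even). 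In both cases, substituting into $\omega(\chi)=(-1)^{1-\delta_\chi}q^{-g/2}A_\chi(d-1)$ and simplifying with $g=d-2+\delta_\chi$, $\epsilon(\chi_0)=1$, $\overline{G(\chi)}=G(\overline\chi)$ and $|G(\chi)|^2=q^{d}$, all the spurious powers of $q$ collapse and one is left with $\omega(\chi)=\overline{\epsilon(\chi_3^{\deg h})}\,G(\chi)/q^{\deg h/2}$. Finally, $|\omega(\chi)|=1$ is immediate from $|\epsilon(\chi_3^{\deg h})|=1$ and $|G(\chi)|=q^{\deg h/2}$, while $\omega(\chi)=\overline{\omega(\chi)}$ follows by combining this formula with the recalled relations $\overline{G(\chi)}=G(\overline\chi)$, $|G(\chi)|=q^{\deg h/2}$ and the standard identities for Gauss sums over $\F_q$ ($\tau(\psi)\overline{\tau(\psi)}=q$ for $\psi$ non-trivial).

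The only genuinely computational step, and the only place something can go wrong, is the evaluation of $\sum_{f\in\cM_{d-1}}e_q(af/h)$ together with the bookkeeping of the resulting classical Gauss sum through the even/odd split: one must pin down exactly for which $a$ it is nonzero, extract the residual phase $e_q(aT^{d-1}/h)$, and then verify that the powers of $q$ coming from $|G(\chi)|$, from the $q^{1/2}$ in $\tau$, and from $q^{-g/2}$ in the functional equation cancel against one another in both parities, leaving precisely the normalisation $\epsilon(\chi_3^{\deg h})$. Everything else is formal manipulation of \eqref{FE}.
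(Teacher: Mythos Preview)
The paper does not prove this lemma at all; it is simply quoted from \cite[Corollary~2.3]{DFL}. Your argument is the standard derivation of the root number of a primitive Dirichlet $L$-function over $\F_q[T]$: read $\omega(\chi)$ off the top coefficient $b_g$ of $\cL_C$ via \eqref{FE}, compute $b_g=(-1)^{1-\delta_\chi}A_\chi(d-1)$, and evaluate $A_\chi(d-1)$ by expanding $\chi(f)$ through $G(\overline\chi)$ and collapsing the additive sum over $\cM_{d-1}$ to the constants $a\in\F_q^*$. These steps are carried out correctly.

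There is, however, one genuine gap. You assert $\overline\chi\vert_{\F_q^*}=\overline{\chi_3^{\,d}}$ with $d=\deg h$, justified by ``$\chi_F\vert_{\F_q^*}=\chi_3^{\deg F}$, applicable here because $\chi$ has square-free conductor $h$''. But having square-free conductor $h$ does not force $\chi=\chi_h$: a primitive cubic character of conductor $h=P_1\cdots P_s$ is $\chi_F$ with $F=P_1^{e_1}\cdots P_s^{e_s}$, $e_i\in\{1,2\}$, and then $\chi\vert_{\F_q^*}=\chi_3^{\deg F}$ with $\deg F\not\equiv\deg h\pmod 3$ in general. What your computation actually proves is
\[
\omega(\chi)=\overline{\epsilon\bigl(\chi\vert_{\F_q^*}\bigr)}\,\frac{G(\chi)}{q^{\deg h/2}},
\]
which specialises to the formula in the lemma only when $\chi\vert_{\F_q^*}=\chi_3^{\deg h}$, e.g.\ when $\chi=\chi_h$. (This is the only case in which the paper ever applies the lemma, so the discrepancy is harmless for the paper's purposes.) Relatedly, the displayed claim ``$\omega(\chi)=\overline{\omega(\chi)}$'' cannot hold as written, since it would force $\omega(\chi)\in\R$; what is meant---and what is actually used later, in the derivation of \eqref{evenAFE}---is $\omega(\overline\chi)=\overline{\omega(\chi)}$, which does follow immediately from the formula above together with $G(\overline\chi)=\overline{G(\chi)}$ and $\epsilon(\overline\psi)=\overline{\epsilon(\psi)}$.
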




The following result generalizes \cite[Proposition 2.4]{DFL} which gives the approximate functional equation when $s=\frac12$.
\begin{prop}[Approximate functional equation]
Let $\chi$ be a primitive character of genus $g$, and $A$ be a positive integer. If $\chi$ is odd, then
\begin{align} \label{oddAFE}
\cL(q^{-s}, \chi) =  \sum_{f\in \mathcal{M}_{\leq A}} \frac{\chi(f)}{|f|^s} + \omega(\chi) (q^{1/2-s})^{g} \sum_{f\in \mathcal{M}_{\leq g-A-1}} \frac{\overline{\chi}(f)}{|f|^{1-s}}
\end{align}
If $\chi$ is even, then
\begin{align}\label{evenAFE}
    \cL(q^{-s},\chi)  = & \frac{1}{1-q^{1-s}}\left[ \sum_{f\in \cM_{\leq A+1}} \frac{\chi(f)}{|f|^s} - q^{1-s} \sum_{f\in \cM_{\leq A}} \frac{\chi(f)}{|f|^s} \right] \nonumber \\
    & + \frac{1}{1-q^s} \frac{\omega(\chi)}{q^{(s-1/2)g}} \frac{\zeta_q(2-s)}{\zeta_q(s+1)} \left[ \sum_{f\in \cM_{\leq g-A}} \frac{\overline{\chi}(f)}{|f|^{1-s}} - q^{s} \sum_{f\in \cM_{\leq g-A-1}} \frac{\overline{\chi}(f)}{|f|^{1-s}} \right]
\end{align}

\end{prop}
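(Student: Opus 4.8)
The plan is to derive the approximate functional equation from the exact functional equation~\eqref{FE}, using the standard trick of splitting the Dirichlet series for $\cL_C$ at an arbitrary degree cutoff and applying~\eqref{FE} to the tail.

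First I would treat the odd case, where $\delta_\chi = 1$ and $\cL_C(u,\chi) = \cL(u,\chi)$ is a polynomial of degree $g$. Write $\cL(u,\chi) = \sum_{d=0}^{g} c_d(\chi)\, u^d$, where $c_d(\chi) = \sum_{f \in \cM_d} \chi(f)$. The functional equation~\eqref{FE} is equivalent to the coefficient symmetry $c_d(\chi) = \omega(\chi)\, q^{d - g/2}\, c_{g-d}(\overline{\chi})$. Now split the sum $\cL(q^{-s},\chi) = \sum_{d \le A} c_d(\chi) q^{-ds} + \sum_{d=A+1}^{g} c_d(\chi) q^{-ds}$. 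In the second sum, substitute the coefficient symmetry and re-index via $e = g - d$, so that $d$ ranging over $A+1 \le d \le g$ corresponds to $0 \le e \le g-A-1$; the factor $q^{d-g/2} q^{-ds}$ becomes $\omega(\chi) q^{(1/2-s)g} q^{e(s-1)}$, and collecting terms gives exactly the two sums $\sum_{f \in \cM_{\le A}} \chi(f)|f|^{-s}$ and $\omega(\chi)(q^{1/2-s})^g \sum_{f \in \cM_{\le g-A-1}} \overline{\chi}(f)|f|^{-(1-s)}$, which is~\eqref{oddAFE}. One should double-check the edge cases $A < 0$ or $A \ge g$ (where one of the sums is empty) but these are consistent with the stated ranges.

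For the even case, $\delta_\chi = 0$, so $\cL(1,\chi) = 0$ and $\cL_C(u,\chi) = \cL(u,\chi)/(1-u)$ is a polynomial of degree $g$ satisfying~\eqref{FE}. The extra subtlety is that the relation between coefficients of $\cL$ and of $\cL_C$ involves the geometric series $1/(1-u)$: writing $\cL(u,\chi) = (1-u)\cL_C(u,\chi)$ means $c_d(\chi) = b_d(\chi) - b_{d-1}(\chi)$ where $b_d$ are the coefficients of $\cL_C$. I would apply the odd-case argument to $\cL_C$ itself — splitting its polynomial at a cutoff and using the coefficient symmetry for $\cL_C$ — to get an approximate functional equation for $\cL_C(q^{-s},\chi)$, then multiply back by $(1-q^{-s})$ and rewrite the partial sums of $b_d$ in terms of partial sums of $c_d$. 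Concretely, $\sum_{d \le D} b_d q^{-ds} = (1-q^{-s})^{-1}\big(\sum_{d\le D}c_d q^{-ds} - q^{-(D+1)s}\sum_{d \le D} c_d \cdot(\text{telescoping correction})\big)$; more cleanly, use $\cL_C(q^{-s},\chi)$ partial-sum $=\sum_{d\le D} b_d q^{-ds}$ and the identity $b_D = \sum_{d\le D} c_d$ to convert. Carrying the factor $(1-q^s)^{-1}$ (rather than $(1-q^{-s})^{-1}$) through the dual sum, together with the ratio $\zeta_q(2-s)/\zeta_q(s+1)$ coming from the completed zeta factors implicit in how $(1-u)$ transforms under $u \mapsto 1/(qu)$, should reproduce~\eqref{evenAFE}. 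The main obstacle is precisely this bookkeeping in the even case: tracking how the $(1-u)^{-1}$ factor and its image $(1 - 1/(qu))^{-1}$ under the functional equation combine to produce the asymmetric constants $1/(1-q^{1-s})$ versus $1/(1-q^s)$ and the zeta ratio, and making sure the degree cutoffs shift correctly by one (note $\cM_{\le A+1}$ and $\cM_{\le g-A}$ appear, versus $\cM_{\le A}$ and $\cM_{\le g-A-1}$ in the odd case). Everything else is a routine re-indexing of finite sums.
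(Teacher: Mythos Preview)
Your proposal is correct and follows essentially the same route as the paper: split the polynomial $\cL_C(u,\chi)$ at degree $A$, apply the coefficient symmetry $b_n(\chi)=\omega(\chi)q^{n-g/2}b_{g-n}(\overline\chi)$ coming from~\eqref{FE} to the tail, and in the even case multiply back by $(1-q^{-s})$ and convert between the coefficients $b_n$ of $\cL_C$ and $a_n$ of $\cL$. The paper carries out exactly this bookkeeping you flag as the main obstacle, deriving two boundary identities for $b_A(\chi)$ and $b_{g-A-1}(\overline\chi)$ in terms of the $a_n$'s and then observing the algebraic identities $-\tfrac{1}{q-1}\bigl(1+\tfrac{\zeta_q(2-s)}{\zeta_q(s+1)}q^s\bigr)=\tfrac{1}{1-q^{1-s}}$ and its companion, which produce the asymmetric constants and the zeta ratio you anticipated.
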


\begin{proof}

Since $\cL(u,\chi)$ is a polynomial of degree $g+1-\delta_\chi$, we may write
\begin{align}\label{LPoly}
    \cL(u,\chi) = \sum_{n=0}^{g+1-\delta_\chi} a_n(\chi) u^n
\end{align}
where
\begin{align}\label{a_n-degf}
a_n(\chi) = \sum_{f\in \mathcal{M}_n} \chi(f).
\end{align}
Similarly, for $\cL_C(u,\chi)$ we write
\begin{align}\label{CompLPoly}
    \cL_C(u,\chi) = \sum_{n=0}^{g} b_n(\chi)u^n.
\end{align}
Substituting \eqref{CompLPoly} into \eqref{FE} and comparing coefficients we get 
\begin{align}\label{b_n-rel}
    b_n(\chi) = \omega(\chi)q^{n-g/2} b_{g-n}(\overline{\chi}).
\end{align}

Applying \eqref{even-odd}, we can write $a_n(\chi)$ in terms of $b_n(\chi)$ such that for $n=0,\dots,g$
\begin{align}\label{a_n-b_n-rel}
a_n(\chi) = \begin{cases} b_n(\chi) & \delta_\chi =1 \\ b_n(\chi)-b_{n-1}(\chi) & \delta_\chi=0  \end{cases}
\end{align}
while if $\delta_\chi=0$ then $a_{g+1}(\chi) = -b_{g}(\chi)$. Reversing \eqref{a_n-b_n-rel}, we can write $b_n(\chi)$ in term of $a_n(\chi)$ such that  
\begin{align}\label{b_n-def}
b_n(\chi) = \begin{cases} a_n(\chi) & \delta_\chi = 1 \\ \sum_{m=0}^n a_m(\chi) & \delta_\chi = 0 \end{cases}
\end{align}
for $n=0,\dots,g$.

Finally, if $\delta_\chi=0$, then we may apply \eqref{b_n-rel} to \eqref{a_n-b_n-rel} to obtain for any $0\leq A\leq g$,
\begin{align*}
    a_{g-A}(\chi) & = b_{g-A}(\chi)-b_{g-A-1}(\chi) \\
    & = \omega(\chi) q^{g/2-A}b_A(\overline{\chi}) - \omega(\chi)q^{g/2-A-1}b_{A+1}(\overline{\chi})\\
    & = \omega(\chi) q^{g/2-A}b_A(\overline{\chi}) - \omega(\chi)q^{g/2-A-1}\left(a_{A+1}(\overline{\chi}) + b_A(\overline{\chi})\right).
\end{align*}
Rearranging and taking conjugates we then obtain
\begin{align}\label{b_A-rel}
    \frac{b_A(\chi)}{q^{(A+1)s}} = \frac{1}{q-1}\left( \frac{\omega(\chi)}{q^{g/2}} a_{g-A}(\overline{\chi})q^{(A+1)(1-s)} + \frac{a_{A+1}(\chi)}{q^{(A+1)s}} \right)
\end{align}
where we have used $1/\omega(\overline{\chi}) = \omega(\chi)$.

Replacing $A$ with $g-A-1$, $s$ with $1-s$ and $\chi$ with $\overline{\chi}$ we obtain
\begin{align}\label{b_g-A-rel}
    \frac{b_{g-A-1}(\overline{\chi})}{q^{(g-A)(1-s)}} = \frac{1}{q-1}\left( \frac{\overline{\omega(\chi)}}{q^{g/2}} a_{A+1}(\chi)q^{(g-A)s} + \frac{a_{g-A}(\overline{\chi})}{q^{(g-A)(1-s)}} \right).
\end{align}

Splitting the sum in \eqref{CompLPoly} at an arbitrary point $A$, and applying \eqref{b_n-rel}, we obtain
\begin{align*}
    \mathcal{L}_C(u,\chi) & = \sum_{n=0}^A b_n(\chi)u^n + \sum_{n=A+1}^{g} b_n(\chi)u^n \\
    & = \sum_{n=0}^A b_n(\chi)u^n  + \omega(\chi) \sum_{n={A+1}}^{g} q^{n-g/2} b_{g-n}(\overline{\chi}) u^n \\
    & = \sum_{n=0}^{A} b_n(\chi) u^n + \omega(\chi) (\sqrt{q} u)^{g} \sum_{n=0}^{g-A-1} \frac{b_n(\overline{\chi})}{q^nu^n}.
\end{align*}

If $\delta_\chi=1$, then $\cL(u,\chi)=\cL_C(u,\chi)$ and $a_n(\chi)=b_n(\chi)$ and so we get the odd approximate function equation
\begin{align}\label{oddAFE}
    \cL(q^{-s},\chi) & = \sum_{n=0}^{A} \frac{a_n(\chi)}{q^{sn}} + \omega(\chi) (q^{1/2-s})^{g} \sum_{n=0}^{g-A-1} \frac{a_n(\overline{\chi})}{q^{(1-s)n}} \nonumber \\
    & = \sum_{f\in \mathcal{M}_{\leq A}} \frac{\chi(f)}{|f|^s} + \omega(\chi) (q^{1/2-s})^{g} \sum_{f\in \mathcal{M}_{\leq g-A-1}} \frac{\overline{\chi}(f)}{|f|^{1-s}}.
\end{align}

If $\delta_\chi=0$, then we get $\cL(u,\chi) = (1-u)\cL_C(u,\chi)$ and we get
\begin{align*}
    \cL(q^{-s},\chi) & = \sum_{n=0}^{A} \frac{b_n(\chi)}{q^{sn}}(1-q^{-s}) + \omega(\chi) q^{(1/2-s)g} \sum_{n=0}^{g-A-1} \frac{b_n(\overline{\chi})}{q^{(1-s)n}}(1-q^{-s}) \nonumber \\
    &  = \sum_{n=0}^{A} \frac{b_n(\chi)}{q^{sn}} \left ( 1- q^{-s} \right )  + \omega(\chi) \frac{\zeta_q(2-s)}{\zeta_q(s+1)} \frac{q^{g/2}}{q^{sg}}
    \sum_{n=0}^{g-A-1} \frac{b_n(\overline{\chi})}{q^{(1-s)n}} \left ( 1-q^{-(1-s)} \right ).
\end{align*}

Expanding the $(1-q^{-s})$ term in the first series and applying \eqref{a_n-b_n-rel} and \eqref{b_A-rel} with the observation that $b_0(\chi)=a_0(\chi)$, we obtain

\begin{align*}
\sum_{n=0}^{A} \frac{b_n(\chi)}{q^{sn}} \left ( 1- q^{-s} \right )  
&= b_0(\chi) + \sum_{n=1}^A \frac{b_n(\chi) - b_{n-1}(\chi)}{q^{sn}} - \frac{b_A(\chi)}{ q^{(A+1)s}}\\
& = b_0(\chi) + \sum_{n=1}^A \frac{a_n(\chi)}{ q^{sn}} - \frac{1}{q-1}\left( \frac{\omega(\chi)}{q^{g/2}} a_{g-A}(\overline{\chi})q^{(A+1)(1-s)} + \frac{a_{A+1}(\chi)}{q^{(A+1)s}} \right) \\
&=  \sum_{f \in \mathcal{M}_{\leq A}} \frac{\chi(f)}{|f|^s} - \frac{1}{q-1}\left( \frac{\omega(\chi)}{q^{g/2}} a_{g-A}(\overline{\chi})q^{(A+1)(1-s)} + \frac{a_{A+1}(\chi)}{q^{(A+1)s}} \right).
\end{align*}

Likewise, expanding the $(1-q^{-(1-s)})$ and applying \eqref{a_n-b_n-rel} and \eqref{b_g-A-rel}, the second series becomes 
\begin{align*}
    \sum_{f \in \mathcal{M}_{\leq g-A-1}} \frac{\overline{\chi}(f)}{ |f|^{1-s}} -\frac{1}{q-1}\left( \frac{\overline{\omega(\chi)}}{q^{g/2}} a_{A+1}(\chi)q^{(g-A)s} + \frac{a_{g-A}(\overline{\chi})}{q^{(g-A)(1-s)}} \right).
\end{align*}

Combining everything, when $\delta_\chi=0$, we obtain the even approximate function equation
\begin{align}\label{evenAFE}
    \cL(q^{-s},\chi) = & \sum_{f \in \mathcal{M}_{\leq A}} \frac{\chi(f)}{|f|^s} +  \frac{\zeta_q(2-s)}{\zeta_q(s+1)} \frac{\omega(\chi)}{q^{(s-1/2)g}} \sum_{f \in \mathcal{M}_{\leq g-A-1}} \frac{\overline{\chi}(f)}{ |f|^{1-s}} \nonumber \\
    & + \frac{1}{1-q^{1-s}} \frac{a_{A+1}(\chi)}{q^{(A+1)s}} + \frac{1}{1-q^{s}} \frac{\zeta_q(2-s)}{\zeta_q(s+1)}\frac{\omega(\chi)}{q^{(s-1/2)g}} \frac{a_{g-A}(\overline{\chi})}{q^{(g-A)(1-s)}}.
\end{align}
where we note that
$$-\frac{1}{q-1} \left(1  + \frac{\zeta_q(2-s)}{\zeta_q(s+1)}q^s\right) = \frac{1}{1-q^{1-s}} \mbox{ and } -\frac{1}{q-1}\left(1 + \frac{\zeta_q(s+1)}{\zeta_q(2-s)}q^{1-s}\right) = \frac{1}{1-q^s}.$$
Finally, rewriting
$$\frac{a_{A+1}(\chi)}{q^{A+1}{s}} = \sum_{f\in \cM_{\leq A+1}} \frac{\chi(f)}{|f|^s} -\sum_{f\in \cM_{\leq A}} \frac{\chi(f)}{|f|^s}$$
and
$$\frac{a_{g-A}(\chi)}{q^{(g-A)(1-s)}} = \sum_{f\in \cM_{\leq g-A}} \frac{\chi(f)}{|f|^{1-s}} -\sum_{f\in \cM_{\leq g-A-1}} \frac{\chi(f)}{|f|^{1-s}}  $$
we obtain the even approximate functional equation.

\end{proof}

\kommentar{
\ccom{This part is now in Section 2.1, so I would remove Section 2.4.}
\subsection{The Family $\mathcal{H}_3(g)$}

Recall, we define
$$\mathcal{H}_3(g) := \{L(u,\chi_{F,3}) : F \mbox{ is cube-free and } g(L)=g \}.$$
Since $F$ is cube-free, we may write it as $F=F_1F_2^2$ where $F_1$ and $F_2$ are squarefree and coprime. Then the Riemann-Hurwitz formula (Theorem 7.16 of \cite{Rosen}) tell us us that the 
degree of $L(u,\chi_{F,3})$ will satisfy
$$g(\chi_{F,3}) \ccom{\text { should be } \deg L(u,\chi_{F,3})}
= \deg(F_1F_2) - \begin{cases} 1 & 3\nmid \deg(F) \\ 2 & 3|\deg(F) \end{cases} $$
\ccom{See the section on cubic characters}
Therefore, if for any $F\in \mathbb{F}_q[T]$, we denote the radical of $F$ as
$$\rad(F) = \prod_{P|F} P$$
then we get the $g(\chi_{F,3})$ will be determined by the degree of its radical as well as the conjugacy class of the degree of $F$, itself. Hence, for any $d\in \mathbb{Z}_{\geq1}$ and $j=0,1,2$, we define
\begin{multline}\label{PolySetDef}
    \mathcal{F}^j_3(d) := \{ F\in \mathbb{F}_q[T] : F \mbox{ is cube-free}, \deg(\rad(F)) = d, \\ \deg(F) \equiv j \mod{3}  \}
\end{multline}
so that there is a one-to-one correspondence between the sets
\begin{align}\label{Corr}
    \mathcal{H}_3(g) \longleftrightarrow \mathcal{F}_3^0(g+2) \cup \mathcal{F}_3^1(g+1) \cup \mathcal{F}_3^2(g+1).
\end{align}
Hence, we wish to compute
\begin{align}\label{PrincAndDualTerms}
S_3(g) := \frac{S_3^0(g+2) + S_3^1(g+1)+S_3^2(g+1)}{|\mathcal{H}_3(g)|} .
\end{align}
where for any $j,d$, we have
\begin{align}\label{FullPolySeries}
S_3^j(d) := \sum_{F\in \mathcal{F}_3^j(d)} L\left(q^{-1/3},\chi_{F,3}\right).
\end{align}

Using the correspondence in \eqref{Corr}, it is shown in \cite{M2} that there exists a linear polynomial $Q$ such that
\begin{align}\label{FamilySize}
    |\mathcal{H}_3(g)| = Q(g)q^{g+2} + O\left(q^{(\frac{1}{2}+\epsilon)g}\right)
\end{align}
such that the leading coefficient of $Q$ is
\begin{align}\label{LCFamily}
    \frac{q-1}{3}\frac{q+2}{3q} \prod_P \left(1-\frac{3}{|P|^2} + \frac{2}{|P|^3}\right).
\end{align}
As it will appear often and it will be nice to have a name for it, we will define
\begin{align}\label{H3}
    H_3 := \prod_P \left(1-\frac{3}{|P|^2}+\frac{2}{|P|^3}\right)
\end{align}}

\subsection{Principal and Dual Terms}

We see that since $\fH(3g) = \fH_o(3g)$, we always have the functional equation
$$\cL(q^{-s}, \chi) =  \sum_{f\in \mathcal{M}_{\leq A}} \frac{\chi(f)}{|f|^s} + \omega(\chi) q^{(\frac12-s)3g} \sum_{f\in \mathcal{M}_{\leq 3g-A-1}} \frac{\overline{\chi}(f)}{|f|^{1-s}}.$$
Further, it will be convenient for computations if we split at an integer that is divisible by $3$. 
Therefore, we define the principal sum
\begin{align*}
    \mathcal{P}_s(3g,3A) := \sum_{\chi\in \fH(3g)} \sum_{f\in \cM_{\leq 3A}} \frac{\chi(f)}{|f|^s} 
\end{align*}
and the dual sum
\begin{align*}
    \mathcal{D}_s(3g,3A) = q^{(\frac{1}{2}-s)3g} \sum_{\chi \in \fH(3g)} \omega(\chi) \sum_{f\in \cM_{\leq 3g-3A-1}} \frac{\overline{\chi}(f)}{|f|^{1-s}},
\end{align*}
so that, for any $0<A<g$, we have
\begin{align}\label{AFEApplied}
    \frac{1}{|\fH(3g)|} \sum_{\chi \in \fH(3g)} \mathcal{L}(q^{-s},\chi) = \frac{\mathcal{P}_s(3g,3A) + \mathcal{D}_s(3g,3A)}{|\fH(3g)|}
\end{align}

\subsection{Perron's Formula}

\begin{prop} \label{perron} If the generating series $\mathcal{A}(u) = \sum_{f \in \cM} a(f) u^{\deg{f}}$ is absolutely convergent in $|u| \leq r < 1$, then
\begin{align*}
\sum_{f \in \cM_n} a(f) = \frac{1}{2 \pi i} \oint_{|u|=r} \frac{\mathcal{A}(u)}{u^{n+1}} \; du
\end{align*}
and
\begin{align*}
\sum_{f \in \cM_{\leq n}} a(f) = \frac{1}{2 \pi i} \oint_{|u|=r} \frac{\mathcal{A}(u)}{u^{n+1}(1-u)} \; du.
\end{align*}
\end{prop}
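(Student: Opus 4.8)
The plan is to expand $\mathcal{A}(u)$ as an ordinary power series in $u$ and integrate term by term. First I would group the defining sum by degree, writing $\mathcal{A}(u) = \sum_{d \geq 0} c_d u^d$ with $c_d := \sum_{f \in \cM_d} a(f)$; this rearrangement is legitimate because the sum defining $\mathcal{A}$ converges absolutely for $|u| \leq r$. On the circle $|u| = r$ the series $\sum_d c_d u^d$ then converges uniformly, by the Weierstrass $M$-test applied with $\sum_d |c_d| r^d < \infty$, so summation and integration may be exchanged:
$$\frac{1}{2\pi i} \oint_{|u|=r} \frac{\mathcal{A}(u)}{u^{n+1}}\, du = \sum_{d \geq 0} c_d \cdot \frac{1}{2\pi i} \oint_{|u|=r} u^{\,d-n-1}\, du = c_n = \sum_{f \in \cM_n} a(f),$$
where I use the elementary fact that $\frac{1}{2\pi i}\oint_{|u|=r} u^{k}\, du$ equals $1$ when $k=-1$ and $0$ for every other integer $k$. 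This gives the first identity.

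For the second identity I would apply the first one to the auxiliary generating series $\mathcal{B}(u) := \mathcal{A}(u)/(1-u)$. Since $|u| \leq r < 1$, the geometric series $1/(1-u) = \sum_{m \geq 0} u^m$ converges absolutely on this disc, and the Cauchy product of two absolutely convergent power series is again absolutely convergent, so $\mathcal{B}(u) = \sum_{d \geq 0} b_d u^d$ with
$$b_d = \sum_{k=0}^{d} c_k = \sum_{f \in \cM_{\leq d}} a(f),$$
the expansion being absolutely convergent on $|u| \leq r$. Thus $\mathcal{B}$ satisfies the hypothesis of the first part, and applying it yields
$$\sum_{f \in \cM_{\leq n}} a(f) = b_n = \frac{1}{2\pi i} \oint_{|u|=r} \frac{\mathcal{B}(u)}{u^{n+1}}\, du = \frac{1}{2\pi i} \oint_{|u|=r} \frac{\mathcal{A}(u)}{u^{n+1}(1-u)}\, du.$$

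The only step needing justification is the interchange of the infinite sum with the contour integral, and this is immediate from uniform convergence of the power series on the compact circle $|u|=r$, which follows from absolute convergence there. I do not expect a substantive obstacle: the proposition is the standard device of recovering Taylor coefficients via Cauchy's integral formula, transcribed to generating functions indexed by $\cM$, with the factor $1/(1-u)$ effecting the passage from $\cM_n$ to $\cM_{\leq n}$.
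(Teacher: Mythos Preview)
Your proof is correct. The paper states this proposition without proof, treating it as a standard fact; your argument---grouping by degree, invoking uniform convergence on $|u|=r$ to interchange sum and integral, and then multiplying by the geometric series $1/(1-u)$ to pass from $\cM_n$ to $\cM_{\leq n}$---is exactly the expected justification and requires no further input.
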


\section{The Principal Sum} \label{section-principal}

\subsection{Contributions of the Principal Sum} 

This section is devoted to proving contribution of the principal sum.

\begin{prop}\label{PrincComp}
Let $\epsilon>0$. For $0 < s < 1$, $s\not=\frac{1}{3}$, we have
$$ \frac{\mathcal{P}_s(3g,3A)}{|\fH(3g)|} = M_q(s)+ C_q \frac{q^{(1-3s)A}}{1-q^{3s-1}} 
+ O_\epsilon\Big(q^{(\epsilon-3s)A} + q^{ A - (1-\epsilon)(3g+1)} + q^{ (1-s+\epsilon)3A- \frac{1}{2}(3g+1)}\Big) $$
where $M_q(s)$ and $C_q $ are as an in Theorem \ref{MainThm}. If $s=\frac{1}{3}$, we have
$$\frac{\mathcal{P}_\frac{1}{3}(3g,3A)}{|\fH(3g)|} = C_q  \left(A+1 + \sum_P \frac{\deg(P)}{|P|^2+|P|-1}\right)  + O_\epsilon\Big(q^{(\epsilon-1)A} + q^{ A - (1- \epsilon)(3g+1)} + q^{ (2+\epsilon)A - \frac{1}{2}(3g+1)}\Big)$$
    
\end{prop}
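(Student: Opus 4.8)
The plan is to compute the double character sum $\mathcal{P}_s(3g,3A) = \sum_{\chi\in\fH(3g)}\sum_{f\in\cM_{\leq 3A}}\chi(f)/|f|^s$ by switching the order of summation and exploiting the bijection $\fH(3g)\leftrightarrow\cH(3g+1)$, so that $\chi = \chi_F = \left(\frac{\cdot}{F}\right)_3$ ranges over cubic residue symbols attached to square-free $F$ of degree $3g+1$. First I would fix $f\in\cM_{\leq 3A}$ and evaluate the inner sum $\sum_{F\in\cH(3g+1)}\chi_F(f)$. By cubic reciprocity (which over $\F_q[T]$ with $q\equiv 1\bmod 3$ is clean, up to a unit factor that is trivial here since we are in the Kummer setting with $q\equiv 1\bmod 6$), $\chi_F(f)$ relates to $\left(\frac{\cdot}{f}\right)_3(F)$, i.e. to a character in $F$ of modulus depending on $f$. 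The main term comes from the ``diagonal'' contribution where $f$ is a perfect cube, say $f = \square^3$ (a cube of a monic polynomial), since then $\chi_F(f)=1$ whenever $(F,f)=1$; the count of square-free $F$ of degree $3g+1$ coprime to $f$ gives $|\cH(3g+1)|$ times a local correction $\prod_{P\mid f}(1+|P|^{-1})^{-1}$. Summing $|\square|^{-3s}$ over monic $\square$ with the local factors produces exactly $M_q(s) + C_q\frac{q^{(1-3s)A}}{1-q^{3s-1}}$ after separating the tail of the geometric-type sum at degree $A$ (the cube $\square^3$ has degree $\leq 3A$ iff $\deg\square\leq A$), with $C_q = \prod_P(1 - \frac{1}{|P|^{3s}(|P|+1)})$ at... wait, $C_q$ in the theorem is the value at the pole; here the coefficient is $\prod_P(1-\frac{1}{|P|(|P|+1)})$ times the appropriate Euler factors — I would write $M_q(s) = \zeta_q(3s)\prod_P(1-\frac{1}{|P|^{3s}(|P|+1)})$ as the ``completed'' sum over all cubes and peel off the partial sum up to degree $A$ to get the stated secondary term.

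The non-diagonal terms — $f$ not a perfect cube — must be shown to contribute only to the error term. For these, $\chi_F(f)$ is a nontrivial cubic character in $F$ of conductor $\rad(f/(\text{cube part}))$ or similar, and the sum $\sum_{F\in\cH(3g+1)}\chi_F(f)$ is an incomplete character sum over square-free polynomials. I would apply Perron's formula (Proposition \ref{perron}) to write this as a contour integral of $\cL(u,\psi_f)/\cZ_q(u^2)$-type generating function, shift contours past $u=q^{-1}$, and use the Weil bound (via the functional equation \eqref{FE}) on $\cL(u,\psi_f)$ to get square-root cancellation: roughly $\sum_{F}\chi_F(f) \ll q^{(3g+1)/2}\deg(f)^{O(1)}$. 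Summing $|f|^{-s}$ over non-cube $f\in\cM_{\leq 3A}$ against this bound yields the terms $q^{A-(1-\epsilon)(3g+1)}$ (from the pole/main-diagonal error in the square-free sieve) and $q^{(1-s+\epsilon)3A - \frac12(3g+1)}$ (from the square-root-cancelling off-diagonal, where $\sum_{f\leq 3A}|f|^{1-s}\asymp q^{(1-s)3A}$ up to $\epsilon$-powers in the conductor). The term $q^{(\epsilon-3s)A}$ comes from completing the cube-sum from degree $A$ to $\infty$ in a slightly lossy way, or equivalently from the error in approximating the local Euler factors.

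The case $s=\frac13$ is then obtained by taking the limit (or directly re-running the computation): the geometric factor $\frac{q^{(1-3s)A}}{1-q^{3s-1}}$ and the pole of $\zeta_q(3s) = (1-q^{1-3s})^{-1}$ at $s=\frac13$ combine. Writing $M_q(s) + C_q\frac{q^{(1-3s)A}}{1-q^{3s-1}}$ and letting $s\to\frac13$: both $M_q(s)$ (through $\zeta_q(3s)$) and the secondary term blow up, but their sum is finite and equals $C_q(A+1+\sum_P\frac{\deg P}{|P|^2+|P|-1})$. Concretely, I would expand $\sum_{\deg\square\leq A}|\square|^{-3s}\prod_{P\mid\square}(1+|P|^{-1})^{-1}$ at $s=\frac13$ directly: this is $\sum_{n=0}^{A}\sum_{\deg\square = n}\prod(\ldots)$, and the Dirichlet series $\sum_\square q^{-3s\deg\square}\prod_{P\mid\square}(1+|P|^{-1})^{-1} = \prod_P(1 + \frac{|P|^{-3s}}{1+|P|^{-1}}\cdot\frac{1}{1-|P|^{-3s}})$ has a simple pole at $s=\frac13$ with computable residue and constant term, the constant term giving the $\sum_P\frac{\deg P}{|P|^2+|P|-1}$ via logarithmic differentiation of the Euler product. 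The error terms specialize by setting $s=\frac13$ in the general-$s$ errors.

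\textbf{Main obstacle.} The hardest part is obtaining square-root cancellation in the off-diagonal sum $\sum_{F\in\cH(3g+1)}\chi_F(f)$ uniformly in $f$ with only a polynomial (in $\deg f$) loss, \emph{and} handling the square-free condition on $F$: one must insert $\mu^2$ via a Möbius-over-squares identity, which introduces an auxiliary parameter whose range must be balanced against the conductor of $\chi_F(f)$ as a character in $F$. Getting the exponents in the three error terms exactly as stated — in particular the $\frac12(3g+1)$ and the coupling $(1-s+\epsilon)3A$ — requires careful bookkeeping of where the square-free sieve parameter is truncated and invoking the Weil bound at the right modulus; this is where the $\epsilon$'s enter, absorbing divisor-function factors $d(f)^{O(1)} \ll |f|^\epsilon$.
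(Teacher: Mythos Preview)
Your proposal is correct and follows essentially the same approach as the paper: swap sums via cubic reciprocity, split $f$ into cubes versus non-cubes, evaluate the cube (diagonal) contribution by counting square-free $F$ coprime to $f$ and summing the resulting Euler products via a generating series with poles at $v=1$ and $v=q^{3s-1}$ (which merge for $s=\tfrac13$), and bound the non-cube contribution by square-root cancellation. One simplification worth noting: the ``main obstacle'' you flag --- balancing a truncation parameter in a M\"obius-over-squares sieve for the square-free condition on $F$ --- is sidestepped in the paper by writing the generating series for $\sum_{F\in\cH}\chi_f(F)u^{\deg F}$ directly as the ratio $\cL(u,\chi_f)/\cL(u^2,\overline{\chi}_f)$ and integrating on $|u|=q^{-1/2}$, so no truncation or balancing is needed; the bound on this ratio is $|f|^\epsilon$ (Lindel\"of on the numerator, a lower bound for the denominator away from its zeros), which is exactly what produces the $q^{(1-s+\epsilon)3A-\frac12(3g+1)}$ term.
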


\subsection{Error Term}

Since we are assuming $q\equiv 1 \mod{6}$, we have by cubic reciprocity (Theorem 3.5 of \cite{Rosen}) that 
$$\chi_{F}(f) = \left(\frac{f}{F}\right)_3 = \left(\frac{F}{f}\right)_3 = \chi_{f}(F).$$
Therefore, since $\fH(3g) = \{\chi_F : F\in \mathcal{H}(3g+1)\}$ we may rewrite 
$$\mathcal{P}_s(3g,3A) = \sum_{f\in \mathcal{M}_{\leq 3A}} \frac{1}{|f|^s} \sum_{F\in \cH(3g+1)} \chi_{f}(F) .$$

We now wish to compute the innermost sum. To do this, we consider the generating series
$$\fP(u;f) := \sum_{d=0}^{\infty} \sum_{F\in \cH(d)} \chi_f(F) u^d.$$
This series converges for $|u|<q^{-1}$.

\begin{lem} \label{lemma-Lindelof}
If $f$ is not a cube, then 
$$\fP(u;f) = \frac{\cL(u,\chi_f)}{\cL(u^2,\overline{\chi}_f)}.$$ 
In particular, $\fP(u;f)$ can be analytically extended to the region $|u|<q^{-1/4-\epsilon}$. Moreover, if $\Gamma_1 = \{u : |u|=q^{-1/2} \}$,  then for any $\epsilon > 0$, 
$$\max_{u\in \Gamma_\epsilon} |\fP(u,f)| \ll q^{\epsilon \deg f}$$
\end{lem}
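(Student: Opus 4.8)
The plan is to establish the three assertions of the lemma in turn: the closed form for $\fP(u;f)$, its analytic continuation, and the bound on the given circle.

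\textbf{Step 1: the closed-form identity.} Expanding the Dirichlet series over square-free monic $F$ gives the Euler product $\fP(u;f)=\prod_P\bigl(1+\chi_f(P)u^{\deg P}\bigr)$, valid for $|u|<q^{-1}$. The key observation is the elementary identity
$$1+\chi_f(P)u^{\deg P}=\frac{1-\overline{\chi}_f(P)u^{2\deg P}}{1-\chi_f(P)u^{\deg P}},$$
which one verifies by cases: if $P\mid f$ both sides equal $1$, while if $P\nmid f$ then $\chi_f(P)\in\mu_3$, so $\chi_f(P)^2=\overline{\chi}_f(P)$ and the numerator factors as $(1-\chi_f(P)u^{\deg P})(1+\chi_f(P)u^{\deg P})$. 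Taking the product over all $P$ and recognising the Euler products of $\cL(u,\chi_f)$ and of $\cL(u^2,\overline{\chi}_f)$ yields $\fP(u;f)=\cL(u,\chi_f)/\cL(u^2,\overline{\chi}_f)$ for $|u|<q^{-1}$, and then for all $u$ by analytic continuation.

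\textbf{Step 2: continuation to $|u|<q^{-1/4}$.} By \eqref{by-mult} the character $\chi_f$ is non-principal precisely because $f$ is not a cube, so $\cL(u,\chi_f)$ is a polynomial, in particular entire. Writing $\chi_f$ as induced by the primitive character $\chi^*$ of conductor $h\mid\rad(f)$, we have $\cL(v,\overline{\chi}_f)=(1-v)^{1-\delta_{\chi^*}}\,\cL_C(v,\overline{\chi^*})\prod_{P\mid f,\,P\nmid h}\bigl(1-\overline{\chi^*}(P)v^{\deg P}\bigr)$; by the Weil bound built into \eqref{FrobDef}--\eqref{FE} (see \cite{W}) the zeros of $\cL_C(v,\overline{\chi^*})$ all lie on $|v|=q^{-1/2}$, while the remaining factors vanish only on $|v|=1$. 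Hence $\cL(u^2,\overline{\chi}_f)$ has no zero for $|u|<q^{-1/4}$, so the quotient from Step 1 is holomorphic on that disc, and a fortiori on $\{|u|\le q^{-1/4-\epsilon}\}$.

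\textbf{Step 3: the bound on $\Gamma=\{|u|=q^{-1/2}\}$.} Since $q^{-1/2}<q^{-1/4}$, $\Gamma$ lies in the domain of holomorphy, and I would bound numerator and denominator separately. From $\cL_C(u,\chi^*)=\prod_{j=1}^{g^*}(1-\gamma_ju)$ with $|\gamma_j|=\sqrt q$ and $g^*\le\deg\rad(f)$, one gets $|\cL(u,\chi_f)|\le(1+|u|)^{1-\delta_{\chi^*}}(1+\sqrt q\,|u|)^{g^*}\prod_{P\mid f,\,P\nmid h}(1+|u|^{\deg P})\le 2^{O(\deg f)}$ on $\Gamma$. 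For the denominator, on $\Gamma$ the point $u^2$ has $|u^2|=q^{-1}$ and is bounded away from the zero set $\{|v|=q^{-1/2}\}\cup\{|v|=1\}$ of $\cL(v,\overline{\chi}_f)$, giving $|\cL(u^2,\overline{\chi}_f)|\ge(1-q^{-1})^{1-\delta_{\chi^*}}(1-q^{-1/2})^{g^*}\prod_{P\mid f,\,P\nmid h}(1-|u|^{2\deg P})\ge(1-q^{-1/2})^{O(\deg f)}$. Dividing, $|\fP(u;f)|\ll\bigl(2/(1-q^{-1/2})\bigr)^{O(\deg f)}$ on $\Gamma$, and since the base tends to $2$ as $q\to\infty$ this is $\ll_\epsilon q^{\epsilon\deg f}$.

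The main obstacle is Step 3. The bound produced there is genuinely of the shape $C_q^{\deg f}$ with $C_q\to 2$, so presenting it cleanly as $q^{\epsilon\deg f}$ requires either keeping track of the $q$-dependence of the constant or simply recording the estimate as $|\fP(u;f)|\ll(2+o_q(1))^{\deg f}$ on the fixed contour $|u|=q^{-1/2}$ — which is the form actually needed when Perron's formula is applied to $\fP(u;f)$ in Proposition \ref{PrincComp}. One must also thread the even/odd dichotomy (the $(1-u^2)$ factor, harmless since it affects only $|u|=1$) and the finitely many imprimitive Euler factors of $\chi_f$ through both the continuation and the estimate; they leave the region $|u|<q^{-1/4}$ untouched but do enter the implied constants.
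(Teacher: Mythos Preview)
Your Steps 1 and 2 are correct and match the paper's argument essentially verbatim. The gap is in Step 3.

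The trivial pointwise bounds you use give at best
\[
|\fP(u;f)|\;\le\;\Bigl(\tfrac{2}{1-q^{-1/2}}\Bigr)^{O(\deg f)}
\quad\text{on }|u|=q^{-1/2},
\]
and this is \emph{not} $\ll_\epsilon q^{\epsilon\deg f}$ for small $\epsilon$: the base exceeds $2$ for every $q$, whereas $q^{\epsilon}<2$ once $\epsilon<\log 2/\log q$ (e.g.\ $\epsilon<0.356$ when $q=7$). Your own final paragraph flags exactly this, but the proposed fix of recording $(2+o_q(1))^{\deg f}$ is not enough either. Tracking it through Lemma~\ref{PrincErrTermLem} and Proposition~\ref{PrincComp} with $A=\lfloor 3g/5\rfloor$ replaces the paper's $q^{(1-s+\epsilon)3A-\frac12(3g+1)}$ by $q^{(1-s+\log_q C)3A-\frac12(3g+1)}$; at $s=\tfrac13$ and $q=7$ this is of order $q^{0.78g}$, which swamps the main term, so the weaker bound genuinely breaks the proof of Theorem~\ref{MainThm}.

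The paper avoids this by invoking two nontrivial inputs on $|u|=q^{-1/2}$: the function-field Lindel\"of bound $\cL(u,\chi_f)\ll_\epsilon q^{\epsilon\deg f}$ \cite[Theorem~5.1]{BCDGL} (a theorem here, since the Riemann Hypothesis is known), and the lower bound $\cL(u^2,\overline{\chi}_f)\gg_\epsilon q^{-\epsilon\deg f}$ from \cite[Lemma~2.6]{DFL}. These replace your $2^{O(\deg f)}$ and $(1-q^{-1/2})^{O(\deg f)}$ and are exactly what is needed to get $q^{\epsilon\deg f}$ uniformly in $q$. Your Step 3 should cite or reprove these two estimates rather than rely on the trivial product bounds.
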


\begin{proof}

Since we are summing over all square-free polynomials, we get the Euler product
\begin{align*}
    \fP(u;f) & = \prod_P \left(1+\chi_f(P)u^{\deg P}\right) = \prod_P \left(\frac{1-\chi^2_f(P)u^{2\deg P}}{1-\chi_f(P)u^{\deg P}}\right)
    \\&  = \frac{\cL(u,\chi_f)}{\cL(u^2,\overline{\chi}_f)} 
\end{align*}
where we have used the fact that $\chi_f$ is a cubic character and thus $\chi_f^2 = \overline{\chi_f}$. Now, since $\chi_f$ is non-trivial ($f$ is not a cube), $\cL(u, \chi_f)$ and $\cL(u^2, \overline{\chi}_f)$ are analytic for all $u$, and  $\fP(u;f)$ is analytic for $|u| < q^{-1/4 - \epsilon}$, since this region does not contain the zeroes of $\cL(u^2, \overline{\chi}_f)$.
Furthermore, since again $\chi_f$ is not trivial, 
for $|u| = q^{-1/2}$, we have bounds
\begin{align*}
\cL(u, \chi_f) &\ll_\epsilon q^{\epsilon \deg{f}} \\
\cL(u^2, \overline{\chi_f}) &\gg_\epsilon q^{-\epsilon \deg{f}} 
\end{align*}
for any $\epsilon > 0$. The first bound is the Lindel\"of hypothesis \cite[Theorem 5.1]{BCDGL} and the second bound is proven in \cite[Lemma 2.6]{DFL}.

\kommentar{Now, we can complete the $L$-functions and then write them in terms of the Frobenii to get
\begin{align*}
    \fP(u;f) &  = \frac{\cL(u,\chi_f)}{\cL(u^2,\overline{\chi}_f)} = \frac{(1-u)^{1-\delta_{\chi_f}} \cL_C(u,\chi_f) }{ (1-u^2)^{1-\delta_{\chi_f}} \cL_C(u^2,\overline{\chi}_f) }\\
    & = \frac{\det(1-\sqrt{q}u\Theta_{\chi_f})}{ (1+u)^{1-\delta_{\chi_f}} \det(1-\sqrt{q}u^2\overline{\Theta}_{\chi_f})}
\end{align*}

Thus we see that $\fP(u;f)$ has poles only when $|u|=q^{-1/4}$ and (possibly) at $u=-1$. This shows that $\fP(u;f)$ can be analytically extended to the region $|u|<q^{-1/4}$. 

Recall that $\Theta_{\chi_f}$ is a unitary matrix with dimension $N$ at most $\deg(f)$. Now, if $u\in \Gamma_1$, then we get
\begin{align*}
    |\cL(u,\chi_f)| & = |(1-u)^{1-\delta_{\chi_f}}\det(1-\sqrt{q}u\Theta_{\chi_f})| \\
    & \leq (1+q^{-1/2-\epsilon})^{1-\delta_{\chi_f}} \prod_{j=1}^N (1+q^{-\epsilon}) \ll (1+q^{-\epsilon})^{\deg(f)}
\end{align*}
Finally, we find that for $u\in\Gamma_1$ then by the reverse triangle inequality
\begin{align*}
    |\cL(u^2,\overline{\chi_f})| & = |(1-u^2)^{1-\delta_{\chi_f}} \det(1-\sqrt{q}u^2\overline{\Theta}_{\chi_f})|\\
    & \geq |(1-q^{-1-2\epsilon})^{1-\delta_{\chi_f}} \prod_{j=1}^N (1-q^{-1/2-2\epsilon})| \gg 1
\end{align*}

Combining the two inequalities above, we obtain
$$\max_{u\in\Gamma_1}|\fP(u;f)| \ll_\epsilon (1+q^{-\epsilon})^{\deg(f)}$$}

\end{proof}

\kommentar{\begin{lem}

If $f$ is not a cube, then for any $\epsilon>0$,
$$\sum_{F\in \cH(d)}\chi_f(F) \ll_\epsilon q^{\epsilon \deg(f)} q^{\frac{1}{2} d}.$$

\end{lem}

\begin{proof}

With $\Gamma_1$ as in the previous lemma, we get that $\frac{\fP(u;f)}{u^{d+1}}$ is meromorphic in the region bound by $\Gamma_1$ with a pole only at $u=0$. Hence,
$$\frac{1}{2\pi i} \oint_{\Gamma_1} \frac{\fP(u;f)}{u^{d+1}} du = \Res_{u=0}\left( \frac{\fP(u;f)}{u^{d+1}} \right) = \sum_{F\in \cH(d)} \chi_f(F).$$
Now, also by the previous Lemma, we have that
$$ \frac{1}{2\pi i} \oint_{\Gamma_1} \frac{\fP(u;f)}{u^{d+1}} du \ll_\epsilon \max_{u\in \Gamma_1} \left| \frac{\fP(u;f)}{u^{d+1}}  \right| \ll_\epsilon q^{\epsilon \deg(f)} q^{\frac{1}{2} d} $$
which completes the proof.

\end{proof}}

\begin{lem}\label{PrincErrTermLem}
For any $\epsilon>0$, we have
$$\frac{1}{|\mathcal{H}(d)|}\sum_{\substack{ f\in \cM_{\leq A} \\  f\not=\mbox{\mancube} }} \frac{1}{|f|^s} \sum_{F\in \cH(d)} \chi_f(F) \ll_\epsilon q^{ (1-s + \epsilon)A - \frac{1}{2}d }.$$
\end{lem}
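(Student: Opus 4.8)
The plan is to estimate the inner sum $\sum_{F\in\cH(d)}\chi_f(F)$ via Perron's formula (Proposition~\ref{perron}) applied to the generating series $\fP(u;f)$, shift the contour out to the circle $\Gamma_\epsilon = \{|u| = q^{-1/2}\}$, and then sum the resulting bound over $f$. Concretely, for $f$ not a cube, $\fP(u;f)$ is analytic in $|u| < q^{-1/4-\epsilon}$ by Lemma~\ref{lemma-Lindelof}, so the only singularity of $\fP(u;f)/u^{d+1}$ inside $\Gamma_\epsilon$ is the pole at $u=0$, whose residue is exactly $\sum_{F\in\cH(d)}\chi_f(F)$. Hence
\begin{align*}
\sum_{F\in\cH(d)}\chi_f(F) = \frac{1}{2\pi i}\oint_{\Gamma_\epsilon}\frac{\fP(u;f)}{u^{d+1}}\,du \ll_\epsilon q^{d/2}\max_{u\in\Gamma_\epsilon}|\fP(u;f)| \ll_\epsilon q^{d/2}q^{\epsilon\deg f},
\end{align*}
where the last step uses the Lindel\"of-on-average bound $\max_{u\in\Gamma_\epsilon}|\fP(u;f)|\ll q^{\epsilon\deg f}$ from Lemma~\ref{lemma-Lindelof}.

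Next I would plug this pointwise bound into the sum over $f$. Using $|f| = q^{\deg f}$ and absorbing $q^{\epsilon\deg f}$ into a (slightly larger) power of $q^\epsilon$, we get
\begin{align*}
\frac{1}{|\cH(d)|}\sum_{\substack{f\in\cM_{\leq A}\\ f\neq\text{\mancube}}}\frac{1}{|f|^s}\Big|\sum_{F\in\cH(d)}\chi_f(F)\Big| \ll_\epsilon \frac{q^{d/2}}{|\cH(d)|}\sum_{f\in\cM_{\leq A}}\frac{q^{\epsilon\deg f}}{|f|^s}.
\end{align*}
Since $|\cM_n| = q^n$, the sum $\sum_{f\in\cM_{\leq A}} q^{\epsilon\deg f}/|f|^s = \sum_{n\leq A} q^{(1-s+\epsilon)n}$ is a geometric series dominated (for $q$ large, since $1-s+\epsilon>0$ when $\epsilon$ is chosen appropriately relative to $s<1$) by its top term $q^{(1-s+\epsilon)A}$ up to a constant. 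Combining this with $|\cH(d)| \gg q^d$ from \eqref{SF-poly} yields
\begin{align*}
\frac{q^{d/2}}{q^d}\cdot q^{(1-s+\epsilon)A} = q^{(1-s+\epsilon)A - d/2},
\end{align*}
which is the claimed bound (after renaming $\epsilon$).

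The main obstacle, and the only genuinely non-routine input, is the uniform bound $\max_{u\in\Gamma_\epsilon}|\fP(u;f)|\ll_\epsilon q^{\epsilon\deg f}$, i.e.\ controlling $\fP(u;f) = \cL(u,\chi_f)/\cL(u^2,\overline\chi_f)$ on the circle $|u|=q^{-1/2}$; this rests on the Lindel\"of bound for $\cL(u,\chi_f)$ (\cite[Theorem 5.1]{BCDGL}) in the numerator and the lower bound $\cL(u^2,\overline\chi_f)\gg_\epsilon q^{-\epsilon\deg f}$ (\cite[Lemma 2.6]{DFL}) in the denominator --- but both of these are already established in Lemma~\ref{lemma-Lindelof}, so here they may simply be invoked. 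A minor point to check is that one really can push the contour all the way to $|u|=q^{-1/2}$: this is legitimate because $\fP(u;f)$ is analytic in the whole region $|u|<q^{-1/4-\epsilon}$, which strictly contains $\Gamma_\epsilon$ (note $q^{-1/2} < q^{-1/4-\epsilon}$ for $\epsilon$ small), so no poles of $\cL(u^2,\overline\chi_f)$ are crossed and the residue computation at $u=0$ is the only contribution. The rest is bookkeeping with geometric series.
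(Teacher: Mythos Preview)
Your proposal is correct and follows essentially the same argument as the paper: apply Perron's formula to $\fP(u;f)$ on the circle $|u|=q^{-1/2}$, invoke the Lindel\"of bound from Lemma~\ref{lemma-Lindelof} to get $\sum_{F\in\cH(d)}\chi_f(F)\ll_\epsilon q^{\epsilon\deg f}q^{d/2}$, then sum trivially over $f$ and divide by $|\cH(d)|$. Your parenthetical caveat about needing $1-s+\epsilon>0$ is in fact automatic since $0<s<1$, so the geometric series is always dominated by its top term.
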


\begin{proof}
We first show that when $f$ is not a cube,
\begin{align} \label{not-a-cube}
\sum_{F\in \cH(d)}\chi_f(F) \ll_\epsilon q^{\epsilon \deg f} q^{\frac{1}{2} d}.
\end{align}
With $\Gamma_1$ as in the previous lemma, we get that $\frac{\fP(u;f)}{u^{d+1}}$ is meromorphic in the region bounded by $\Gamma_1$ with a pole only at $u=0$. Hence,
$$\frac{1}{2\pi i} \oint_{\Gamma_1} \frac{\fP(u;f)}{u^{d+1}} du = \Res_{u=0}\left( \frac{\fP(u;f)}{u^{d+1}} \right) = \sum_{F\in \cH(d)} \chi_f(F),$$
and 
$$ \frac{1}{2\pi i} \oint_{\Gamma_1} \frac{\fP(u;f)}{u^{d+1}} du \ll_\epsilon \max_{u\in \Gamma_1} \left| \frac{\fP(u;f)}{u^{d+1}}  \right| \ll_\epsilon q^{\epsilon \deg f} q^{\frac{1}{2} d} $$
which completes the proof of \eqref{not-a-cube}.
Applying this result, we then get that
\begin{align*}
    \sum_{\substack{ f\in \cM_{\leq A} \\  f\not=\mbox{\mancube} }} \frac{1}{|f|^s} \sum_{F\in \cH(d)} \chi_f(F) & \ll_\epsilon  q^{\frac{1}{2} d}  \sum_{\substack{ f\in \cM_{\leq A} \\  f\not=\mbox{\mancube} }} |f|^{(\epsilon - s)} \\   & \ll  q^{\frac{1}{2} d} \sum_{n\leq A} q^n q^{n(\epsilon - s)}  \ll q^{\frac{1}{2} d} q^{(1-s+\epsilon)A}
\end{align*}
and the result follows from the fact that $|\mathcal{H}(d)| = \frac{q^d}{\zeta_q(2)}$.

\end{proof}

\subsection{Main Term}

In the case that $f = h^3$ is a perfect cube, then
$$\chi_f(F) = \left(\frac{F}{f}\right)_3 = \left(\frac{F}{h}\right)_3^3 = \begin{cases} 1 & (F,f)=1 \\ 0  & (F,f)\not=1\end{cases}.$$
Hence, in the case that $f$ is a perfect cube, we get that
$$\sum_{F\in \cH(d)} \chi_f(F) = |\{\cH(d,f)\}|$$
where
$$\cH(d,f) = \{F\in \cH(d) : (F,f)=1\}.$$

Therefore, we consider the generating series
\begin{align*}
    \mathcal{Q}(u;f)&  = \sum_{d=0}^{\infty} |\cH(d,f)|u^d = \sum_{(F,f)=1} \mu^2(F) u^{\deg{F}}\\
    & = \prod_{P\nmid f} (1+u^{\deg P}) = \prod_{P|f} (1+u^{\deg P})^{-1} \prod_P \left(\frac{1-u^{2\deg P}}{1-u^{\deg P}}\right) \\
    & = \prod_{P|f}(1+u^{\deg P})^{-1} \frac{\cZ_q(u)}{\cZ_q(u^2)} 
     = \prod_{P|f}(1+u^{\deg P})^{-1} \frac{1-qu^2}{1-qu}
\end{align*}
We see that $\mathcal{Q}(u;f)$ can be meromorphically extended to the region $|u|<1$ with a simple pole at $u=q^{-1}$. Using Proposition \ref{perron}, we write
\begin{align*}
 |\cH(d;f)| =  \frac{1}{2\pi i} \oint_{|u|=q^{-2}} \frac{\mathcal{Q}(u;f)}{u^{d+1}} du .
\end{align*}
Moving the contour to
$$\Gamma_2 = \left\{u : |u| = q^{-\epsilon}\right\},$$
we get that
\begin{align} \label{after-Perron}
 |\cH(d;f)| = \frac{1}{2\pi i} \oint_{\Gamma_2} \frac{\mathcal{Q}(u;f)}{u^{d+1}} du - \Res_{u=q^{-1}}\left(\frac{\mathcal{Q}(u;f)}{u^{d+1}}\right).
\end{align}
For $d\geq 1$, we compute
\begin{align*}
    \Res_{u=q^{-1}}\left(\frac{\mathcal{Q}(u;f)}{u^{d+1}}\right) & = \lim_{u\to q^{-1}} \left(\frac{(u-q^{-1})}{u^{d+1}}\prod_{P|f}(1+ u^{\deg P})^{-1} \frac{1-qu^2}{1-qu}\right) \\
    & = - \prod_{P|f}\left(1+\frac{1}{|P|}\right)^{-1} \left(q^d-q^{d-1}\right).
\end{align*}
For $u\in \Gamma_2 \iff |u|=q^{-\epsilon}$, we use
\begin{align*}
\prod_{P|f} (1+u^{\deg P})^{-1} \leq \prod_{P|f} (1- |u|^{\deg P})^{-1}
\ll_\epsilon \prod_{P|f} \left( 1 + |u|^{\deg{P}} \right) \leq  \sum_{d \mid f} |u|^{\deg{f}} \ll_\epsilon |f|^\epsilon,
\end{align*}
bounding the number of divisors of $f$ by $|f|^\epsilon$. Then, for $u\in \Gamma_2$, we have
\begin{align*}
    |\mathcal{Q}(u;f)| & =\left| \prod_{P|f} (1+u^{\deg P})^{-1} \frac{1-qu^2}{1-qu}\right|  \ll_\epsilon  |f|^\epsilon,
\end{align*}
and replacing in \eqref{after-Perron}, we conclude that 
\begin{align} \label{H(d;f)}
| \mathcal{H}(d;f) | = \prod_{P|f}\left(1+\frac{1}{|P|}\right)^{-1}|\cH(d)| + O_\epsilon \left( |f|^\epsilon q^{\epsilon d} \right).
\end{align}

\begin{lem}\label{PrincMainTermLem}
Let $\epsilon >0$, and $\epsilon < s < 1$.  Then if $s\not = \frac{1}{3}$, we get
\begin{multline*}
    \frac{1}{|\cH(d)|} \sum_{\substack{ f\in \cM_{\leq 3A} \\  f=\mbox{\mancube} }} \frac{1}{|f|^s} \sum_{F\in \cH(d)} \chi_f(F)= M_q(s)+ C_q \frac{q^{(1-3s)A}}{1-q^{3s-1}}
    + O_\epsilon \left( q^{(\epsilon-3s)A} + q^A q^{  (\epsilon-1)d} \right)
\end{multline*}
where $M_q(s)$ and $C_q$ are as in the Theorem \ref{MainThm}. If $s=\frac{1}{3}$, then we get
\begin{multline*}
    \frac{1}{|\cH(d)|} \sum_{\substack{ f\in \cM_{\leq 3A} \\  f=\mbox{\mancube} }} \frac{1}{|f|^s} \sum_{F\in \cH(d)} \chi_f(F)=  C_q \left(A+1 + \sum_P \frac{1}{|P|^2+|P|-1}\right)  
    + O_\epsilon \left( q^{(\epsilon-1)A} + q^A q^{ (\epsilon-1)d} \right).
\end{multline*}
\end{lem}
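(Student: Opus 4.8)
The plan is to isolate the perfect-cube part of the principal sum, reduce it to a single sum of a multiplicative function over $\cM_{\leq A}$, and then read off the main and secondary terms by Perron's formula together with a contour shift. For $f=h^{3}$ a perfect cube, $\chi_{h^{3}}(F)=\chi_h(F)^{3}$ equals $1$ when $(F,h)=1$ and $0$ otherwise, so $\sum_{F\in\cH(d)}\chi_{h^{3}}(F)=|\cH(d;h^{3})|$ as noted before \eqref{H(d;f)}. Since $h\mapsto h^{3}$ is a bijection from $\cM_{\leq A}$ onto the perfect cubes in $\cM_{\leq 3A}$, with $|h^{3}|^{s}=|h|^{3s}$ and $\prod_{P\mid h^{3}}=\prod_{P\mid h}$, substituting \eqref{H(d;f)} and dividing by $|\cH(d)|=q^{d}/\zeta_q(2)$ turns the cube part of the sum into
$$S_A+O_{\epsilon}\Bigl(q^{(\epsilon-1)d}\sum_{n\leq A}q^{(1+3\epsilon-3s)n}\Bigr),\qquad S_A:=\sum_{h\in\cM_{\leq A}}\frac{1}{|h|^{3s}}\prod_{P\mid h}\Bigl(1+\frac1{|P|}\Bigr)^{-1}.$$
As $\epsilon<s$ forces $1+3\epsilon-3s<1$, the inner sum is $\ll_\epsilon q^{A}$ and the error is the claimed $q^{A}q^{(\epsilon-1)d}$; everything now rests on evaluating $S_A$.

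For this I would introduce the generating series $\mathcal{C}(u):=\sum_{h\in\cM}|h|^{-3s}\bigl(\prod_{P\mid h}(1+1/|P|)^{-1}\bigr)u^{\deg h}$. Writing $v=q^{-3s}u$, the local factor at $P$ is $1+(1+1/|P|)^{-1}v^{\deg P}/(1-v^{\deg P})=(|P|+1-v^{\deg P})/((|P|+1)(1-v^{\deg P}))$, so
$$\mathcal{C}(u)=\cZ_q(v)\prod_P\Bigl(1-\frac{v^{\deg P}}{|P|+1}\Bigr)=\frac{g(u)}{1-q^{1-3s}u},\qquad g(u):=\prod_P\Bigl(1-\frac{(q^{-3s}u)^{\deg P}}{|P|+1}\Bigr).$$
The crucial analytic input is that $g$ converges absolutely, hence is holomorphic, on $|u|<q^{3s}$, so $\mathcal{C}$ is meromorphic there with a single simple pole at $u=q^{3s-1}$; moreover a direct computation gives $\mathcal{C}(1)=\zeta_q(3s)\prod_P(1-|P|^{-3s}(|P|+1)^{-1})=M_q(s)$ and $g(q^{3s-1})=\prod_P(1-1/(|P|(|P|+1)))=C_q$. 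By Proposition \ref{perron}, $S_A=\frac1{2\pi i}\oint_{|u|=r}\mathcal{C}(u)u^{-A-1}(1-u)^{-1}\,du$ for small $r>0$.

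When $s\neq\frac13$ I would push the contour out to $|u|=q^{3s-\epsilon}$; since $\epsilon<s<1$ this radius lies strictly between $\max(1,q^{3s-1})$ and $q^{3s}$, so the shift crosses exactly the simple poles at $u=1$ and $u=q^{3s-1}$. Their contributions are $-\Res_{u=1}=\mathcal{C}(1)=M_q(s)$ and, using $\Res_{u=q^{3s-1}}\mathcal{C}=-q^{3s-1}g(q^{3s-1})=-q^{3s-1}C_q$, the term $-\Res_{u=q^{3s-1}}=C_q\,q^{(1-3s)A}/(1-q^{3s-1})$; on the new circle $|g(u)|\ll_\epsilon1$, $|1-q^{1-3s}u|\geq q^{1-\epsilon}-1$ and $|1-u|\gg_\epsilon1$, so the remaining integral is $\ll_\epsilon q^{-(3s-\epsilon)A}$. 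Combined with the reduction above, this is the first formula.

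The case $s=\frac13$ is the crux: there $q^{1-3s}=1$, so the pole of $\mathcal{C}$ at $u=q^{3s-1}$ merges with the Perron pole at $u=1$, and the integrand becomes $g(u)u^{-A-1}(1-u)^{-2}$ with a \emph{double} pole at $u=1$ ($g$ being holomorphic on $|u|<q$). Shifting to $|u|=q^{1-\epsilon}$ gives $S_A=-\Res_{u=1}g(u)u^{-A-1}(1-u)^{-2}+O_\epsilon(q^{(\epsilon-1)A})$, where $\Res_{u=1}g(u)u^{-A-1}(1-u)^{-2}=\frac{d}{du}\bigl[g(u)u^{-A-1}\bigr]_{u=1}=g'(1)-(A+1)g(1)$; since $g(1)=C_q$ and logarithmic differentiation gives $g'(1)/g(1)=-\sum_P\deg(P)q^{-\deg P}/(|P|+1-q^{-\deg P})=-\sum_P\deg(P)/(|P|^{2}+|P|-1)$, we obtain $-\Res_{u=1}=C_q\bigl(A+1+\sum_P\deg(P)/(|P|^{2}+|P|-1)\bigr)$, which with the reduction yields the $s=\frac13$ formula. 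I expect the main obstacle to be exactly this confluence: recognising that the secondary pole collides with the trivial pole precisely at $s=\frac13$, computing the resulting double-pole residue (which is what produces the term linear in $A$), and checking that $g$ stays holomorphic out to $|u|<q^{3s}$ (resp.\ $|u|<q$) so that the contour can be pushed past both poles — this last point is what legitimises the clean identities $\mathcal{C}(1)=M_q(s)$ and $g(q^{3s-1})=C_q$; the remaining estimates are routine.
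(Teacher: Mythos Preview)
Your proof is correct and follows essentially the same route as the paper: reduce to the sum $S_A$ over $h\in\cM_{\leq A}$ via \eqref{H(d;f)}, form the generating series (your $\mathcal{C}$ is the paper's $\mathcal{G}_s$), factor it as $\cZ_q(q^{-3s}u)$ times a convergent Euler product, and shift the Perron contour to $|u|=q^{3s-\epsilon}$, picking up the residues at $u=1$ and $u=q^{3s-1}$ (which coalesce into a double pole when $s=\tfrac13$). Your bookkeeping is in fact slightly more careful than the paper's in the error-term reduction, and your computation for $s=\tfrac13$ correctly produces $\sum_P \deg(P)/(|P|^2+|P|-1)$, matching the paper's proof and Proposition~\ref{PrincComp} (the lemma statement's missing $\deg(P)$ is a typo).
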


\begin{proof}

Indeed, if we write $f=h^3$, then we get using \eqref{H(d;f)}
\begin{align} \nonumber
    &\sum_{\substack{ f\in \cM_{\leq 3A} \\  f=\mbox{\mancube} }} \frac{1}{|f|^s} \sum_{F\in \cH(d)} \chi_f(F) = \sum_{h\in \cM_{\leq A}} \frac{|\cH(d,h)|}{|h|^{3s}}  \\
    \label{sum-cubes}
    = &  \sum_{h\in \cM_{\leq A}} \frac{1}{|h|^{3s}}\left[ \prod_{P|h}\left(1+ \frac{1}{|P|}\right)^{-1}|\cH(d)| + O_\epsilon\left( q^{\epsilon \deg h} q^{\epsilon d} \right)\right].
\end{align}
First, we compute the error term and find
\begin{align} \label{sum-cubes-ET}
    \sum_{h\in \cM_{\leq A}} \frac{q^{\epsilon \deg h}q^{\epsilon d}}{|h|^{3s}} = q^{\epsilon d} \sum_{0 \leq m\leq A} \left(\frac{q^\epsilon}{q^{3s}}\right)^m \ll q^{\epsilon d},
\end{align}
which gives the second error term upon dividing by $|\cH(d)|$.
For the main term, we define the generating series
$$\mathcal{G}_s(v):=\sum_{h\in \cM} \prod_{P|h}\left(1+\frac{1}{|P|}\right)^{-1} \frac{v^{\deg h}}{|h|^{3s}}.$$
Expanding it as an Euler product, we see that
\begin{align}\label{PrinceGenSer}
    \mathcal{G}_s(v) & = \prod_P \left( 1 + \left(1+\frac{1}{|P|}\right)^{-1}\sum_{k=1}^{\infty} \left(\frac{v}{q^{3s}}\right)^{k\deg P} \right) \\
    & = \prod_P \left( \frac{1-\left(\frac{v}{q^{3s}}\right)^{\deg P} + \left(1-\frac{1}{|P|+1}\right)\left(\frac{v}{q^{3s}}\right)^{\deg P}}{ 1-\left(\frac{v}{q^{3s}}\right)^{\deg P} }\right) \nonumber \\
    & = \cZ_q\left(\frac{v}{q^{3s}}\right) \prod_P \left(1-\frac{v^{\deg P}}{|P|^{3s}(|P|+1)}\right), \nonumber 
\end{align}
and  $\mathcal{G}_s(v)$ can be meromorphically continued to the region $|v|\leq q^{3s-\epsilon}$ with a simple pole when $v=q^{3s-1}$. 
Then, using Proposition \ref{perron},
\begin{align*}
\sum_{h\in \cM_{\leq A}} \frac{1}{|h|^{3s}}  \prod_{P|h}\left(1+\frac{1}{|P|}\right)^{-1} = \frac{1}{2 \pi i} \oint_{|u|=q^{-2}} \frac{\mathcal{G}_s(v)}{1-v} \frac{dv}{v^{A+1}},
\end{align*}
and moving the contour to $\Gamma_3 = \{v : |v|=q^{3s-\epsilon}\}$, where 
 $s>\epsilon$ and $s\not=\frac13$, we get that
\begin{align} \nonumber
\sum_{h\in \cM_{\leq A}} \frac{1}{|h|^{3s}}  \prod_{P|h}\left(1+\frac{1}{|P|}\right)^{-1}   &=  \frac{1}{2\pi i} \oint_{\Gamma_3} \frac{\mathcal{G}_s(v)}{1-v} \frac{dv}{v^{A+1}}  \\ 
\label{2-residues} & \quad  -\Res_{v=q^{3s-1}} \left(\frac{\mathcal{G}_s(v)}{(1-v)v^{A+1}}\right)  
- \Res_{v=1} \left(\frac{\mathcal{G}_s(v)}{(1-v)v^{A+1}}\right).
\end{align}
Furthermore, using \eqref{sum-cubes} and \eqref{sum-cubes-ET}, \begin{align*}
 \sum_{h\in \cM_{\leq A}} \frac{1}{|h|^{3s}}  \prod_{P|h}\left(1+\frac{1}{|P|}\right)^{-1} =
\frac{1}{|\cH(d)|} \sum_{\substack{ f\in \cM_{\leq 3A} \\  f=\mbox{\mancube} }} \frac{1}{|f|^s} \sum_{F\in \cH(d)} \chi_f(F) + O_\epsilon\left( q^A q^{(\epsilon -1)d} \right)
\end{align*}
and 
$$\left|\frac{1}{2\pi i} \oint_{\Gamma_3} \frac{\mathcal{G}_s(v)}{1-v} \frac{dv}{v^{A+1}}\right| \ll \frac{\max_{v\in \Gamma_3}\left|\frac{\mathcal{G}_s(v)}{1-v} \right|}{q^{(3s-\epsilon)A}} \ll_\epsilon q^{(\epsilon-3s)A} .$$
So, we have obtained the error terms of the lemma, and 
it remains to compute the residues in \eqref{2-residues}.
By \eqref{PrinceGenSer}, we see that
\begin{align*}
     \Res_{v=q^{3s-1}} \left(\frac{\mathcal{G}_s(v)}{(1-v)v^{A+1}}\right) & = -\frac{q^{(1-3s)A}}{1-q^{3s-1}}\prod_P \left(1-\frac{1}{|P|(|P|+1)}\right) 
\end{align*}
while
\begin{align*}
    \Res_{v=1} \left(\frac{\mathcal{G}_s(v)}{(1-v)v^{A+1}}\right) & = - \zeta_q(3s)\prod_P \left(1-\frac{1}{|P|^{3s}(|P|+1)}\right) = -M(s)
\end{align*}
which converges because $s>\epsilon$, $s\not=\frac{1}{3}$. This completes the proof for $s\not=\frac{1}{3}$.

Now, if $s=\frac13$, working as above, we get 
\begin{align} \nonumber
\sum_{h\in \cM_{\leq A}} \frac{1}{|h|^{3s}}  \prod_{P|h}\left(1+\frac{1}{|P|}\right)^{-1}   &=  
- \Res_{v=1} \left(\frac{\mathcal{G}_s(v)}{(1-v)v^{A+1}}\right) + O \left(  q^{(\epsilon-1)A} + q^A q^{ (\epsilon-1)d} \right),
\end{align}
where now have a double pole at $v=1$, and it remains to compute the residue at $v=1$.
Denoting
$$\mathcal{K}(v) := \prod_P \left(1-\frac{v^{\deg P}}{|P|(|P|+1)}\right)$$
we obtain that
\begin{align*}
\Res_{v=1}\left(\frac{\mathcal{G}_{\frac{1}{3}}(v)}{(1-v)v^{A+1}}\right) & = \lim_{v\to 1} \frac{d}{dv} \frac{\mathcal{K}(v)}{v^{A+1}} = -\mathcal{K}(1)(A +1) + \mathcal{K}'(1).
\end{align*}
The result now follows from the fact that $C_q = \mathcal{K}(1)$, and 
$$\frac{\mathcal{K}'(v)}{\mathcal{K}(v)} = \frac{d}{dv}\log(\mathcal{K}(v)) = -\sum_P \frac{v^{\deg{P}-1} \, \deg{P} }{|P|(|P|+1)-v^{\deg P}},$$
which completes the proof of Lemma \ref{PrincMainTermLem}.
\end{proof}

\begin{proof}[Proof of  Proposition \ref{PrincComp}.]
We can now combine Lemmas \ref{PrincErrTermLem} (with $d=3g+1$ and replacing $A$ by $3A$) and \ref{PrincMainTermLem} (with $d=3g+1$) to prove Proposition \ref{PrincComp}.
\end{proof}

\section{The Dual Term}\label{section-dual-sum} 

\subsection{Contributions of the Dual  Sum} 
This section is devoted to proving contribution of the dual terms.

\begin{prop}\label{DualComp}
Let $\epsilon>0$ and $0<s<1$, $s\not=\frac{1}{3}$. Then, 
\begin{align*}
    \frac{\mathcal{D}_s(3g,3A)}{|\fH(3g)|} = -C_q \frac{q^{(1-3s)A}}{1-q^{3s-1}} + O_\epsilon\left( \frac{q^{3(1-s)A+2-2s}}{q^{(2-3\epsilon)g}} + \frac{q^{(\frac{3}{4} + \epsilon)g}}{q^{3(s+\frac{1}{4} + \epsilon)A}} +  q^{(\frac{1}{3}-s)(3g+1)} E_s(3g,3A)\right)
\end{align*}
where $C_q$ is as in Theorem \ref{MainThm} and
$$E_s(3g,3A) = \begin{cases}  1 & s<2/3 \\(g-A)^2 & s=2/3 \\ (g-A)q^{(3s-2)(g-A)} & s>2/3. \end{cases}$$
If $s=\frac{1}{3}$, then
\begin{align*}
    \frac{\mathcal{D}_{\frac{1}{3}}(3g,3A)}{|\fH(3g)|} =  C_q \left(g-A - \frac{1}{3}\frac{q+2}{q-1} +\sum_{P} \frac{\deg(P)}{|P|^3+2|P|^2-1}\right) + O_\epsilon\left(\frac{q^{(2+\epsilon)A}}{q^{(2-\epsilon)g}} + \frac{q^{(\frac{3}{4}+\epsilon)g}}{q^{(\frac{7}{4}+\epsilon)A}} +  1\right).
\end{align*}
\end{prop}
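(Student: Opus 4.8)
The plan is to evaluate the dual sum
$$\mathcal{D}_s(3g,3A) = q^{(\frac12-s)3g} \sum_{\chi \in \fH(3g)} \omega(\chi) \sum_{f\in \cM_{\leq 3g-3A-1}} \frac{\overline{\chi}(f)}{|f|^{1-s}}$$
by first rewriting it, using Lemma \ref{DFLLem1}, in terms of cubic Gauss sums. Since every $\chi \in \fH(3g)$ is odd of genus $3g$ with square-free conductor $F \in \cH(3g+1)$, the sign $\omega(\chi)$ is, up to the $\epsilon$-factor $\overline{\epsilon(\chi_3^{3g+1})}$, equal to $G(\chi_F)/q^{(3g+1)/2}$. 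After interchanging summation, one is left with averaging $G(\chi_F)\overline{\chi_F}(f)$ over square-free $F$. Using the standard twisted multiplicativity of cubic Gauss sums together with cubic reciprocity (valid since $q\equiv 1 \bmod 6$), the inner average over $F$ becomes a Dirichlet series in an auxiliary variable whose analytic properties are governed by the Dirichlet series of cubic Gauss sums, i.e. the metaplectic Eisenstein series. This is where I would invoke Proposition \ref{DFLPropExt} (the extension of the main result of \cite{DFL} to Gauss sums weighted by Euler products): it supplies the location of the relevant pole and the residue, which is exactly the mechanism producing the main term $-C_q q^{(1-3s)A}/(1-q^{3s-1})$.

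Concretely, I would set up a generating function $\mathfrak{D}(u)$ in the degree of $f$, apply Perron's formula (Proposition \ref{perron}) to extract the sum over $f \in \cM_{\leq 3g-3A-1}$, and then shift the contour. The integrand, after plugging in the evaluated $F$-average, will have its main contribution from a pole coming from the residue of the cubic Gauss sum Dirichlet series (the Kubota--Patterson residue), and this residue — weighted by the Euler product coming from the $\overline{\chi_F}(f)$ twist — must be computed explicitly. The multiplicative factor $q^{(\frac12-s)3g}$ combines with $q^{(3g+1)/2}$ in the denominator of the sign and with the conductor-size normalization $|\fH(3g)| \asymp q^{3g+2}/\zeta_q(3)$ to produce the clean power $q^{(1-3s)A}$; tracking these powers carefully is essential. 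The remaining three pieces of the contour integral give the three error terms: the $q^{3(1-s)A+2-2s}/q^{(2-3\epsilon)g}$ term from the tail/trivial bound on the full $F$-sum, the $q^{(3/4+\epsilon)g}/q^{3(s+1/4+\epsilon)A}$ term from the subconvex/Lindel\"of-type bound on the remaining contour, and the $q^{(1/3-s)(3g+1)}E_s(3g,3A)$ term from secondary poles whose order depends on whether $s$ is below, at, or above $2/3$ (explaining the trichotomy in $E_s$). For $s = \frac13$, the pole at $v = q^{3s-1} = 1$ collides with the pole from Perron's $1/(1-v)$, producing a double pole; computing its residue yields the linear-in-$(g-A)$ main term together with the explicit constant $-\tfrac13\tfrac{q+2}{q-1} + \sum_P \deg(P)/(|P|^3+2|P|^2-1)$, by logarithmically differentiating the relevant Euler product exactly as in the proof of Lemma \ref{PrincMainTermLem}.

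I expect the main obstacle to be the explicit evaluation of the $F$-average of $G(\chi_F)\overline{\chi_F}(f)$ and in particular extracting the correct residue of the associated Dirichlet series of cubic Gauss sums with the Euler-product weight. The subtlety is that the residue of the Kubota--Patterson Dirichlet series is itself a somewhat delicate object (only partially understood even for cubic Gauss sums, and essentially unknown for higher order), and here it must be paired with the arithmetic factor coming from the coprimality/twist conditions between $f$ and $F$, then meromorphically continued far enough to the left to see the pole at the point matching $v = q^{3s-1}$. Getting the arithmetic Euler factors to match \emph{exactly} the negative of the secondary term of the principal sum (Proposition \ref{PrincComp}) — which is what forces the cancellation at $s=\frac13$ — is the crux; this is precisely the step where the precise formulas developed in \cite{DFL} and extended in Proposition \ref{DFLPropExt} are indispensable, and where any looseness in the analysis would destroy the transition phenomenon. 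A secondary technical point is bookkeeping the various ranges of $f$ (the cube versus non-cube split, and the completion of the sum to all of $\cM$ at the cost of a controlled tail), together with the odd/even completed-$L$-function normalization factor $\zeta_q(2-s)/\zeta_q(s+1)$, though since $\fH(3g)=\fH_o(3g)$ here this last issue is mild.
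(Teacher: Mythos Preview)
Your overall architecture is right and matches the paper: rewrite $\mathcal{D}_s$ via Lemma~\ref{DFLLem1} and Lemma~\ref{DFLLem2} as a double sum of $G(f,F)$, evaluate the inner $F$-average using the Kubota--Patterson/\cite{DFL} machinery, then use Perron's formula and shift contours to extract the main term and residues. The trichotomy in $E_s$ and the double pole at $s=\tfrac13$ arise exactly where you say they do.

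There is, however, one structural point you have blurred which in practice is the whole computation. The argument requires \emph{two} successive Gauss-sum averages, not one. First one applies Proposition~\ref{DFLProp} (not \ref{DFLPropExt}) to $\sum_{F\in\cM_{3g+1},(F,f)=1} G(f,F)$; its main term is \emph{itself} a Gauss sum, proportional to $\overline{G(1,f_1)}/|f_1|^{2/3}$ times an Euler product $\prod_{P\mid f}(1+|P|^{-1})^{-1}$, where $f=f_1f_2^2f_3^3$ and only $f_2=1$ survives. Only then, in a second pass, does one average $\overline{G(1,f_1)}$ over $f_1$ with that Euler product weight, and \emph{this} is where Proposition~\ref{DFLPropExt} is actually used. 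After both passes one is left with a clean sum over $m\le g-A-1$ to which Perron and the generating series $\mathcal{D}(v)$ of Lemma~\ref{MMTlem} are applied. Your proposal places \ref{DFLPropExt} at the $F$-average, but \ref{DFLPropExt} treats $\sum_F G(1,F)\prod_{P\mid F}(1-a_P)$, not $\sum_F G(f,F)$; as written you would be stuck with an unaveraged $\overline{G(1,f_1)}$ in the $f$-variable after the first step.

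A few smaller corrections: $|\fH(3g)|=q^{3g+1}/\zeta_q(2)$ (since $3g\equiv 0\bmod 3$), not $q^{3g+2}/\zeta_q(3)$. The ``cube versus non-cube split'' belongs to the principal sum, not the dual; here the relevant decomposition is $f=f_1f_3^3$ coming from $\delta_{f_2=1}$. And the three error terms do not originate quite where you say: the $q^{(3/4+\epsilon)g}/q^{3(s+1/4+\epsilon)A}$ term comes from the error in Proposition~\ref{DFLProp} (Lemma~\ref{D_ETBound}), the $q^{3(1-s)A+2-2s}/q^{(2-3\epsilon)g}$ term from the shifted contour $\Gamma_4$ in Lemma~\ref{MMTlem}, and the $E_s$-term from both the error in Proposition~\ref{DFLPropExt} (Lemma~\ref{D_METBound}) and the secondary residues at $\xi_3^j q^{2/3-s}$ and $\xi_3^j$.
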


\begin{rem}
    As in Theorem \ref{MainThm}, when $q$ is fixed and $g-A\to \infty$, we could write the second result as 
    $$\frac{\mathcal{D}_{\frac{1}{3}}(3g,3A)}{|\fH(3g)|} = C_q (g-A) \left( 1 + o(1) \right).$$
    We keep it as is to give evidence of an explicit constant appearing in the shape of a prime sum. 
\end{rem}

\subsection{Applying Results of \cite{DFL}}

Let $\chi_F$ be any cubic character as defined by \eqref{by-mult}. This is then a character modulo $F$, but not necessarily primitive. We define the generalized cubic Gauss sum
\begin{align} \label{generalized-gauss-sum}
G(V,F):= \sum_{a \bmod{F}} \chi_{F}(a) e_q\left(\frac{aV}{F}\right).\end{align}
It is important to notice that if $\chi_F$ has conductor $F'$ with $\deg(F')<\deg(F)$ then $G(1,F)\not=G(\chi_F)$. Conversely, if $F$ is, say square-free, then we do have that $G(1,F) = G(\chi_F)$.

\begin{lem}{\cite[Corollary 2.3, Equation (22)]{DFL}} \label{DFLLem2}
For any $(f,F)=1$ and $V$, we have
$$\overline{\chi}_F(f) G(V,F) = G(fV,F).$$
\end{lem}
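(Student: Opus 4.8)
The statement to prove is the twisted-multiplicativity identity $\overline{\chi}_F(f)\,G(V,F) = G(fV,F)$ for $(f,F)=1$. The plan is to work directly from the definition \eqref{generalized-gauss-sum} and exploit the fact that multiplication by $f$ is a bijection on the residues modulo $F$, since $(f,F)=1$. First I would write
\begin{align*}
G(fV,F) = \sum_{a \bmod F} \chi_F(a)\, e_q\!\left(\frac{afV}{F}\right),
\end{align*}
and then substitute $a = f^{-1}b$, where $f^{-1}$ denotes an inverse of $f$ modulo $F$; as $a$ runs over residues mod $F$ so does $b$, and $e_q$ only depends on the numerator mod $F$ (using the property $e_q(x/F)=e_q(y/F)$ when $x\equiv y \bmod F$ recorded in the excerpt). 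This gives
\begin{align*}
G(fV,F) = \sum_{b \bmod F} \chi_F(f^{-1}b)\, e_q\!\left(\frac{bV}{F}\right).
\end{align*}

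Next I would use complete multiplicativity of $\chi_F$ on $\F_q[T]/(F)$ — which follows from the definition \eqref{by-mult} — to factor $\chi_F(f^{-1}b) = \chi_F(f^{-1})\chi_F(b) = \overline{\chi_F(f)}\,\chi_F(b)$, where the last equality uses that $\chi_F$ takes values in $\mu_3$ so $\chi_F(f^{-1}) = \chi_F(f)^{-1} = \chi_F(f)^2 = \overline{\chi_F(f)}$ (and $\chi_F(f)\neq 0$ since $(f,F)=1$). Pulling the constant $\overline{\chi}_F(f)$ out of the sum then leaves exactly $\overline{\chi}_F(f)\sum_{b \bmod F}\chi_F(b) e_q(bV/F) = \overline{\chi}_F(f)\,G(V,F)$, which is the claim.

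There is no serious obstacle here — the identity is essentially a change of variables — but the one point requiring a little care is the interaction between the two moduli: $\chi_F(a)$ is a function of $a$ mod $F$ while $e_q(afV/F)$ is, a priori, a function of $afV$ as a polynomial. The key observation making the substitution legitimate is precisely that $e_q(\,\cdot/F)$ factors through $\F_q[T]/(F)$, so that reindexing $a\mapsto f^{-1}b$ is harmless. I would state this observation explicitly and otherwise present the computation as the short display above; this matches the treatment in \cite{DFL}, whose Corollary 2.3 is being quoted.
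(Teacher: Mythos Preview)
Your argument is correct and is the standard change-of-variables proof of this identity. Note, however, that the paper does not actually prove this lemma: it is stated with a citation to \cite[Corollary 2.3, Equation (22)]{DFL} and used as a black box, so there is no ``paper's own proof'' to compare against. Your write-up supplies exactly the short verification one would expect, and the care you take with $e_q(\cdot/F)$ factoring through $\F_q[T]/(F)$ is the only point worth making explicit.
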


Applying Lemmas \ref{DFLLem1} and \ref{DFLLem2}, with the observation that $\chi_f(F) =0$ if $(F,f)=1$, we get that
\begin{align*}
        \mathcal{D}_s(3g,3A) & = q^{(\frac{1}{2}-s)3g}\sum_{f\in \cM_{\leq 3g-3A-1}}\frac{1}{|f|^{1-s}} \sum_{\substack{F\in \cH(3g+1) \\ (F,f)=1}} \overline{\chi}_F(f) \omega(\chi_F)  \\
        & = \frac{\overline{\epsilon(\chi_3)}}{q^{3gs+\frac{1}{2}}} \sum_{f\in\cM_{\leq  3g-3A-1}} \frac{1}{|f|^{1-s}} \sum_{\substack{F\in\cH(3g+1) \\ (F,f)=1}} G(f,F).
\end{align*}

Finally, we state another result that can be found in Lemma 2.12 of \cite{DFL}.

\begin{lem}\label{DFLLem3} If $(F_1,F_2)=1$, then
    \begin{align*}
        G(V,F_1F_2) &= \chi^2_{F_1}(F_2) G(V,F_1)G(V,F_2) \\ & = G(VF_2,F_1)G(V,F_2).
    \end{align*}
    Moreover, if there exists a prime $P$ such that $P^2|F$ and $P\nmid V$, then
    $$G(V,F)=0.$$
\end{lem}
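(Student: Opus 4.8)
The plan is to prove the three assertions in order: the multiplicativity (first form) by a Chinese Remainder Theorem computation, the second form as a formal consequence of the first together with Lemma \ref{DFLLem2}, and the vanishing statement by reducing to prime powers and invoking a complete additive character sum.

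For the multiplicativity when $(F_1,F_2)=1$, I would parametrize the residues $a\bmod F_1F_2$ by pairs $(b_1,b_2)$ with $b_i\bmod F_i$ via $a\equiv b_1F_2+b_2F_1\pmod{F_1F_2}$; this is a bijection because $F_2$ is invertible mod $F_1$ and $F_1$ is invertible mod $F_2$ (size count plus injectivity via reduction mod $F_1$ and mod $F_2$). Using $\chi_{F_1F_2}=\chi_{F_1}\chi_{F_2}$, which is immediate from the multiplicative definition \eqref{by-mult} since $F_1$ and $F_2$ have disjoint sets of prime factors, the fact that $\chi_{F_i}$ depends only on the residue mod $F_i$ (so $\chi_{F_1}(a)=\chi_{F_1}(b_1F_2)=\chi_{F_1}(b_1)\chi_{F_1}(F_2)$ and likewise for $F_2$), the additivity $e_q(x+y)=e_q(x)e_q(y)$, and triviality of $e_q$ on $\F_q[T]$ (so $e_q(aV/(F_1F_2))=e_q(b_1V/F_1)e_q(b_2V/F_2)$), the sum factors as $G(V,F_1F_2)=\chi_{F_1}(F_2)\chi_{F_2}(F_1)\,G(V,F_1)\,G(V,F_2)$. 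Since $q\equiv 1\bmod 6$ and $F_1,F_2$ are monic, cubic reciprocity (Theorem 3.5 of \cite{Rosen}, as already used in Section \ref{section-principal}) gives $\chi_{F_1}(F_2)=\chi_{F_2}(F_1)$, so the prefactor equals $\chi_{F_1}(F_2)^2=\chi_{F_1}^2(F_2)$, which is the first displayed identity. For the second displayed identity I would simply apply Lemma \ref{DFLLem2} with $f=F_2$ (valid since $(F_2,F_1)=1$): $G(VF_2,F_1)=\overline{\chi}_{F_1}(F_2)\,G(V,F_1)=\chi_{F_1}^2(F_2)\,G(V,F_1)$ using $\overline{\chi}_{F_1}=\chi_{F_1}^2$, and substituting this into the first identity yields $G(V,F_1F_2)=G(VF_2,F_1)\,G(V,F_2)$.

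For the vanishing statement, write $F=P^eM$ with $e\ge 2$, $P\nmid M$, and $P\nmid V$. By the multiplicativity just proved, $G(V,F)$ is a (nonzero) scalar multiple of $G(V,P^e)G(V,M)$, so it suffices to show $G(V,P^e)=0$. In $G(V,P^e)=\sum_{a\bmod P^e}\chi_P(a)^e e_q(aV/P^e)$ the terms with $P\mid a$ vanish; for the rest I would group the residues by their class $a_0$ modulo $P^{e-1}$, writing $a=a_0+P^{e-1}b$ with $b$ running over residues mod $P$. Since $e-1\ge 1$ one has $a\equiv a_0\pmod P$, hence $\chi_P(a)^e=\chi_P(a_0)^e=\chi_{P^e}(a_0)$, and $e_q(aV/P^e)=e_q(a_0V/P^e)\,e_q(bV/P)$ because $P^{e-1}/P^e=1/P$. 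Thus the inner sum equals $\chi_{P^e}(a_0)\,e_q(a_0V/P^e)\sum_{b\bmod P}e_q(bV/P)$, and $\sum_{b\bmod P}e_q(bV/P)=0$ since $b\mapsto e_q(bV/P)$ is a nontrivial additive character of $\F_q[T]/(P)$ (nontrivial precisely because $P\nmid V$ and $\deg P\ge 1$, using non-degeneracy of the trace to produce an element of nonzero exponential). Hence every inner sum is $0$ and $G(V,P^e)=0$, giving $G(V,F)=0$.

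The computations here are routine function-field bookkeeping; the only points requiring a little care are checking that the CRT parametrization is genuinely a bijection, the invocation of cubic reciprocity to collapse $\chi_{F_1}(F_2)\chi_{F_2}(F_1)$ to $\chi_{F_1}^2(F_2)$, and the standard fact that a complete sum of a nontrivial additive character over $\F_q[T]/(P)$ vanishes. I do not anticipate any genuine obstacle beyond keeping track of these.
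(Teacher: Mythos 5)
Your proof is correct. Note that the paper does not prove this lemma at all — it is quoted directly as Lemma 2.12 of \cite{DFL} — so there is no in-paper argument to compare against; your argument (CRT factorization of the residues, cubic reciprocity for $q\equiv 1\bmod 6$ to collapse $\chi_{F_1}(F_2)\chi_{F_2}(F_1)$ to $\chi_{F_1}^2(F_2)$, Lemma \ref{DFLLem2} for the second form, and the vanishing of the complete nontrivial additive character sum $\sum_{b\bmod P}e_q(bV/P)$ for the last claim) is the standard one and fills that gap correctly.
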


The second part of Lemma \ref{DFLLem3} implies that if $F\in\cM_{3g+1}$ is not square-free and $(F,f)=1$ then $G(f,F)=0$. Therefore, in the formula of $\cD_s(3g,3A)$ above, we may remove the condition that $F$ is square-free, and write
\begin{align}\label{DualGaussSum}
    \mathcal{D}_s(3g,3A) = \frac{\overline{\epsilon(\chi_3)}}{q^{3gs+\frac{1}{2}}} \sum_{f\in\cM_{\leq 3g-3A-1}} \frac{1}{|f|^{1-s}} \sum_{\substack{F\in\cM_{3g+1} \\ (F,f)=1}} G(f,F).
\end{align}


\begin{prop}[Proposition 3.1 of \cite{DFL}]\label{DFLProp}
Let $f=f_1f_2^2f_3^3$ with $f_1$ and $f_2$ square-free and coprime. We have, for any $\epsilon>0$ 
\begin{multline*}
    \sum_{\substack{F\in \cM_d \\ (F,f)=1}} G(f,F) = \delta_{f_2=1}\rho(d;f) \frac{\overline{G(1,f_1)}}{|f_1|^{2/3}} \frac{q^{4d/3}}{\zeta_q(2)} \prod_{P|f} \left(1+\frac{1}{|P|}\right)^{-1}  + O_\epsilon\left( \delta_{f_2=1} \frac{q^{(\frac{1}{3}+\epsilon)d}}{|f_1|^{\frac{1}{6}}} + q^{d}|f|^{\frac{1}{4} + {\epsilon}} \right)
\end{multline*}
where
$$\rho(d;f) = \begin{cases} 1 & d+\deg(f)\equiv 0 \mod{3} \\ \frac{\tau(\chi_3)}{q^{1/3}} & d+\deg(f) \equiv 1 \mod{3} \\ 0 & d+\deg(f)\equiv 2 \mod{3} \end{cases}.$$
\end{prop}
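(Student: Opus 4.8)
The plan is to study the generating Dirichlet series of the generalized cubic Gauss sums $G(f,F)$, to locate its pole using the theory of metaplectic Eisenstein series, and to recover the coefficient sum $\sum_{F\in\cM_d,\,(F,f)=1}G(f,F)$ by Perron's formula (Proposition~\ref{perron}) together with a contour shift.

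First I would reduce to a twisted sum of primitive Gauss sums. Writing $f=f_1f_2^2f_3^3$ and using Lemma~\ref{DFLLem2} together with the fact that $\chi_F$ is cubic, one finds, for $(F,f)=1$, that $G(f,F)=\overline{\chi}_F(f_1)\chi_F(f_2)\,G(1,F)$, so the cube part $f_3^3$ drops out; by the second part of Lemma~\ref{DFLLem3}, $G(1,F)=0$ unless $F$ is square-free, in which case $G(1,F)=G(\chi_F)$. Hence
\begin{align*}
\sum_{\substack{F\in\cM_d\\ (F,f)=1}}G(f,F)=\sum_{\substack{F\in\cH(d)\\ (F,f)=1}}\overline{\chi}_F(f_1)\chi_F(f_2)\,G(\chi_F).
\end{align*}
Next I would form the generating series $\Psi_f(u)=\sum_{\substack{F\ \text{squarefree}\\(F,f)=1}}\overline{\chi}_F(f_1)\chi_F(f_2)\,G(\chi_F)\,u^{\deg F}$. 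Using cubic reciprocity (valid since $q\equiv1\bmod6$) to convert the $\overline{\chi}_F(f_1)\chi_F(f_2)$ twist into a Dirichlet character in $F$ of modulus $f_1f_2$, and the multiplicativity of Gauss sums from Lemma~\ref{DFLLem3}, one identifies $\Psi_f$ --- after stripping the finitely many Euler factors that enforce $(F,f)=1$ and normalize the root of unity --- with a specialization of the Kubota--Patterson Dirichlet series of cubic Gauss sums twisted by $\chi_{f_1f_2}$. The crucial input, coming from Hoffstein~\cite{Hoffstein} and Patterson~\cite{P} over function fields and packaged in the form needed in~\cite{DFL}, is that $\Psi_f(u)$ admits meromorphic continuation past $|u|=q^{-4/3}$, its only pole there being simple, present precisely when $f_2=1$, with residue given explicitly in terms of $\overline{G(1,f_1)}/|f_1|^{2/3}$, the Euler product $\prod_{P\mid f}(1+1/|P|)^{-1}$, and the factor $\rho(d;f)$ (this last reflecting that the residue is supported in a single class of $\deg F\bmod3$, the class $\equiv1$ carrying the constant-field Gauss sum $\tau(\chi_3)$ and the class $\equiv2$ contributing nothing).

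I would then apply Perron's formula, writing $\sum_{F\in\cM_d,\,(F,f)=1}G(f,F)=\frac{1}{2\pi i}\oint_{|u|=r}\Psi_f(u)\,u^{-d-1}\,du$ for small $r$, and push the contour outward. When $f_2=1$, crossing the simple pole at $u=q^{-4/3}$ contributes the residue of $\Psi_f(u)u^{-d-1}$ there, which by the previous step equals exactly $\rho(d;f)\frac{\overline{G(1,f_1)}}{|f_1|^{2/3}}\frac{q^{4d/3}}{\zeta_q(2)}\prod_{P\mid f}(1+1/|P|)^{-1}$; when $f_2\neq1$ no pole is crossed. For the error, one bounds the shifted-contour integral: pushing to the boundary of the region of continuation and invoking a convexity (Lindel\"of-on-average) estimate for the twisted cubic Gauss-sum series yields the refined term $\delta_{f_2=1}\,q^{(1/3+\epsilon)d}|f_1|^{-1/6}$, while the cruder contribution --- handled by the trivial bound $|G(\chi_F)|=|F|^{1/2}$ together with a square-free sieve --- yields $q^d|f|^{1/4+\epsilon}$.

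The main obstacle is the second step: establishing the meromorphic continuation of the twisted cubic Gauss-sum Dirichlet series and, above all, pinning down its residue at $u=q^{-4/3}$ exactly --- the $|f_1|^{-2/3}$ dependence, the local factors at the primes dividing $f$, and the arithmetic weight $\rho(d;f)$. This is precisely where the elementary toolkit fails and one must invoke the deep connection with metaplectic Eisenstein series introduced by Kubota~\cite{Ku1969,Ku1971}, Patterson's explicit residue computations~\cite{P1978}, and their function-field incarnations due to Hoffstein~\cite{Hoffstein} and Patterson~\cite{P}; by contrast the rest of the argument --- the reciprocity bookkeeping, removal of the coprimality condition by a finite Euler factor, Perron's formula, and the contour estimates --- is routine, if delicate, analytic number theory.
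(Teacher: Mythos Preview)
Your outline is essentially the strategy of \cite{DFL}, which is exactly what the paper invokes: the proposition is quoted from \cite[Proposition~3.1]{DFL}, and the paper's only addition is to observe that the second error term in \cite{DFL} is an integral $\frac{1}{2\pi i}\oint_{|u|=q^{-\sigma}}\widetilde{\psi}(f,u)\,u^{-d}\,\frac{du}{u}$ for any $\tfrac{2}{3}<\sigma<\tfrac{4}{3}$, and that choosing $\sigma=1$ together with the convexity bound $\widetilde{\psi}(f,u)\ll|f|^{\frac{1}{2}(\frac{3}{2}-\sigma)+\epsilon}$ from \cite[Proposition~3.11]{DFL} yields the stated $q^{d}|f|^{1/4+\epsilon}$.

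One small correction to your sketch: the term $q^{d}|f|^{1/4+\epsilon}$ does \emph{not} arise from the trivial bound $|G(\chi_F)|=|F|^{1/2}$ plus a square-free sieve --- that would give $q^{3d/2}$, which is far too large. It is instead the value of the shifted contour integral at $|u|=q^{-1}$, controlled by the convexity bound for the Gauss-sum generating series $\widetilde{\psi}(f,u)$; this is precisely the point the paper isolates. The first error term $\delta_{f_2=1}\,q^{(1/3+\epsilon)d}|f_1|^{-1/6}$ comes, as you say, from a separate piece of the analysis in \cite{DFL}.
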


\begin{proof} In \cite[Proposition 3.1]{DFL}, the second error term is of the form
$$\frac{1}{2 \pi i} \int_{|u| = q^{-\sigma}} \frac{\widetilde{\psi}(f, u)}{u^d} \frac{du}{u}$$
for any $\frac23 < \sigma < \frac43$. In that region, we have the convexity bound 
$$
\widetilde{\psi}(f, u) \ll |f|^{\frac12 \left(\frac32 - \sigma \right) + \epsilon}
$$
from \cite[Proposition 3.11]{DFL}. Taking $\sigma =1$, we get the result.
\end{proof}

Using the above proposition in \eqref{DualGaussSum}, we write 
$$\mathcal{D}_s(3g,3A) = MT_s(3g,3A) + O_\epsilon(ET_s(3g,3A))$$
where
$$MT_s(3g,3A)  := \frac{\overline{\epsilon(\chi_3)}q^{(\frac{4}{3}-s)3g+\frac{5}{6}}}{\zeta_q(2)} \sum_{f\in\cM_{\leq 3g-3A-1}} \frac{\delta_{f_2=1} \rho(1;f) \overline{G(1,f_1)}}{|f|^{1-s}|f_1|^{2/3}}   \prod_{P|f} \left(1+\frac{1}{|P|}\right)^{-1}$$
and 
$$ET_s(3g,3A) := \frac{1}{q^{3gs+\frac{1}{2}}} \sum_{f\in\cM_{\leq 3g-3A-1}} \frac{1}{|f|^{1-s}}\left( \delta_{f_2=1} \frac{q^{(\frac{1}{3}+\epsilon)(3g+1)}}{|f_1|^{\frac{1}{6}}} + q^{3g+1}|f|^{\frac{1}{4} + \varepsilon} \right),$$
where we use the fact that $|\epsilon(\chi_3)|=1$.
\subsection{Bounding the Error Term}

\begin{lem}\label{D_ETBound}
    For any $\epsilon>0$ and $s\geq 0$, we get that
    $$ET_s(3g,3A) \ll
     q^{(\frac{15}{4} + \epsilon)g - 3(s+\frac{1}{4}+\epsilon)A }$$
\end{lem}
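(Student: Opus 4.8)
The plan is to bound the quantity
$$ET_s(3g,3A) = \frac{1}{q^{3gs+\frac12}} \sum_{f\in\cM_{\leq 3g-3A-1}} \frac{1}{|f|^{1-s}}\left( \delta_{f_2=1} \frac{q^{(\frac13+\epsilon)(3g+1)}}{|f_1|^{\frac16}} + q^{3g+1}|f|^{\frac14+\epsilon} \right)$$
by splitting it into the two natural pieces and estimating each by a geometric-series argument over $\cM_{\leq 3g-3A-1}$, grouping polynomials by degree. Write $N := 3g-3A-1$, so $|f| \leq q^N$.

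\textbf{First piece.} For the term with $\delta_{f_2=1}$, I would simply drop the constraint $f_2=1$ and the factor $|f_1|^{-1/6} \leq 1$, so this contributes at most
$$\frac{q^{(\frac13+\epsilon)(3g+1)}}{q^{3gs+\frac12}} \sum_{f\in\cM_{\leq N}} \frac{1}{|f|^{1-s}} \ll \frac{q^{(\frac13+\epsilon)(3g+1)}}{q^{3gs+\frac12}} \sum_{n\leq N} q^n q^{-n(1-s)} \ll \frac{q^{(\frac13+\epsilon)(3g+1)}}{q^{3gs+\frac12}} \, q^{sN},$$
using $\sum_{n\leq N} q^{ns} \ll q^{sN}$ when $s>0$ (and if $s=0$ the sum is $\ll N q^{sN}$, which is absorbed by the $\epsilon$'s, or one argues separately — since the lemma claims $s\geq 0$, I'd note the harmless logarithmic loss is swallowed by $q^{\epsilon g}$). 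Substituting $N = 3g-3A-1$ and simplifying the exponents of $q$ gives a bound of the shape $q^{(1+3\epsilon - 3s)g + \ldots} q^{-3sA}$ up to bounded multiplicative constants; I would then check this is dominated by the claimed $q^{(\frac{15}{4}+\epsilon)g - 3(s+\frac14+\epsilon)A}$.

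\textbf{Second piece.} For the term $q^{3g+1}|f|^{\frac14+\epsilon}$, the sum to estimate is
$$\frac{q^{3g+1}}{q^{3gs+\frac12}} \sum_{f\in\cM_{\leq N}} |f|^{\frac14+\epsilon - (1-s)} \ll \frac{q^{3g+1}}{q^{3gs+\frac12}} \sum_{n\leq N} q^n q^{n(s - \frac34 + \epsilon)} \ll \frac{q^{3g+1}}{q^{3gs+\frac12}}\, q^{(s+\frac14+\epsilon)N},$$
since the exponent $1 + s - \frac34 + \epsilon = s + \frac14+\epsilon > 0$ so the geometric sum is dominated by its top term $n = N$. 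Again substitute $N = 3g-3A-1$, collect the powers of $q$, and verify the result matches $q^{(\frac{15}{4}+\epsilon)g - 3(s+\frac14+\epsilon)A}$ (after absorbing the $q^{1-\frac12}$ and the $q^{-(s+\frac14+\epsilon)}$ into bounded constants, possibly rescaling $\epsilon$).

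\textbf{Main obstacle and cleanup.} The only real work is bookkeeping: carefully tracking which of the two pieces is larger for which ranges of $s$, and confirming that the stated exponent $\frac{15}{4}g - 3(s+\frac14)A$ is an upper bound for \emph{both} in all cases — in particular that the first piece (which carries the factor $q^{(\frac13+\epsilon)(3g+1)}$ independent of $A$) does not dominate. One expects the second piece to be the binding one, which is why the exponent of $A$ in the lemma is exactly $-(s+\frac14+\epsilon)\cdot 3$. I would also take care with the edge case $s=0$ (and any place where a geometric ratio equals $1$), where an extra factor of $N \ll g$ appears; this is harmless since it is absorbed by an arbitrarily small power $q^{\epsilon g}$, but it should be mentioned. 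A final remark: since the lemma is only ever applied with $3A < 3g$ (i.e. $A < g$, as in \eqref{AFEApplied}), one may freely use $N \geq 0$ and need not worry about empty sums.
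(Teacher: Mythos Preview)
Your approach is correct and, for the second piece, identical to the paper's: both group by degree, use that the geometric ratio $q^{s+\frac14+\epsilon}>1$, and read off the exponent $q^{(\frac{15}{4}+\epsilon)g - 3(s+\frac14+\epsilon)A}$ after substituting $N=3g-3A-1$.

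For the first piece you take a shorter route than the paper. The paper actually uses the structure $\delta_{f_2=1}$ to write $f=f_1f_3^3$ with $f_1$ square-free, splits the sum as $\sum_{n_1+3n_3\le B}$, and does a three-way case analysis in $s$ (at $s=\tfrac23$). You instead discard both $\delta_{f_2=1}$ and the saving $|f_1|^{-1/6}$ and bound the full sum $\sum_{n\le N}q^{ns}\ll q^{sN}$, which yields a bound of size $q^{(1+3\epsilon)g-3sA}$ (note: your written exponent ``$1+3\epsilon-3s$'' is a slip --- the $-3gs$ and $+3gs$ cancel). This cruder bound is indeed dominated by the stated $q^{(\frac{15}{4}+\epsilon)g-3(s+\frac14+\epsilon)A}$ precisely because $A<g$: the inequality $(\tfrac34+3\epsilon)A\le(\tfrac{11}{4}-2\epsilon)g$ holds for $\epsilon$ small. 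You should make that comparison explicit rather than saying ``I would then check'', since it is the only place where the constraint $A<g$ enters. With that one line added, your argument is cleaner than the paper's case analysis and equally valid; the paper's decomposition buys nothing here because the first piece is never the dominant one.
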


\begin{proof}
For the second sum of $ET_s(3g,3A)$, we have 
\begin{align*}
\frac{q^{3g+1}}{q^{3gs+\frac{1}{2}}}\sum_{f\in \cM_{\leq 3g-3A-1}} \frac{|f|^{\frac{1}{4}+\epsilon}}{|f|^{1-s}} = q^{3g(1-s)+\frac{1}{2}} \sum_{n\leq 3g-3A-1}  q^{(s+\frac{1}{4}+\epsilon)n} \ll q^{(\frac{15}{4} + \epsilon)g - 3(s+\frac{1}{4}+\epsilon)A .}
\end{align*}
 For the first sum of $ET_s(3g,3A)$, if $f_2=1$, then we can write $f=f_1f_3^3$ where $f_1$ is square-free and $f_3$ is anything. Hence, setting $B=3g-3A-1$, we get
\begin{align*}
\frac{1}{q^{3gs+\frac{1}{2} }}\sum_{f\in\cM_{\leq B}} \frac{\delta_{f_2=1}q^{(\frac{1}{3}+\epsilon)(3g+1)}}{|f|^{1-s}|f_1|^{\frac{1}{6}}} & = \frac{q^{\frac{5}{6}+\epsilon}}{ q^{(s-\frac{1}{3}-\epsilon)3g}}\sum_{n_1+3n_3\leq B} \sum_{\substack{ f_1\in \cH(n_1) \\ f_3\in \cM_{n_3}}} \frac{1}{q^{(7/6-s)n_1 + 3(1-s)n_3}} \\
& \ll \frac{q^{\frac{5}{6}+\epsilon}}{ q^{(s-\frac{1}{3}-\epsilon)3g}} \sum_{n_1\leq B} q^{(s-1/6)n_1} \sum_{n_3\leq (B-n_1)/3} q^{(3s-2)n_3} \\
& \ll \frac{q^{\frac{5}{6}+\epsilon}}{ q^{(s-\frac{1}{3}-\epsilon)3g}} \sum_{n_1\leq B} q^{(s-1/6)n_1}  \begin{cases} 1 & s<2/3 \\ \frac{B-n_1}{3} & s=2/3 \\ q^{(s-\frac{2}{3})(B-n_1)}  & s>2/3 , \end{cases}
\end{align*}
and working case by case with trivial bounds, it is straightforward to see that this error term is also bounded by $q^{(\frac{15}{4} + \epsilon)g - 3(s+\frac{1}{4}+\epsilon)A }.$




\end{proof}

\subsection{Extending Proposition \ref{DFLProp}}

Since there is a factor of $\delta_{f_2=1}$ in the sum over $f$ in $MT_s(3g,3A)$, we can write $f = f_1f_3^3$ where $f_1\in\cH(n_1)$ and $f_3\in \cM_{n_3}$. Moreover, since $G(1,f)=0$ unless $f$ is square-free, we may extend the sum over $\cH(n_1)$ to the sum over $\cM_{n_1}$. Hence, we can rewrite
\begin{align} \nonumber
    MT_s(3g,3A) &= \frac{\overline{\epsilon(\chi_3)}q^{(\frac{4}{3}-s)3g+\frac{5}{6}}}{\zeta_q(2)} \sum_{n_1+3n_3\leq 3g-3A-1} \frac{\rho(n_1+1;1)}{q^{(1-s)(n_1 + 3n_3)}}  \\ \label{MTs}
    & \hspace{0.4cm} \times \sum_{\substack{f_1\in \cM_{n_1} \\ f_3\in \cM_{n_3}}} \frac{\overline{G(1,f_1)}}{|f_1|^{2/3}}\prod_{P|f_1f_3} \left(1+\frac{1}{|P|}\right)^{-1}
\end{align}
We now extend Proposition \ref{DFLProp} to get an estimate when $G(1, F)$ is multiplied by an Euler product.

\begin{prop}\label{DFLPropExt}
For any values $a_P$ such that $|a_P| \leq \frac{1}{|P|}$, we have  for any $H\in\mathcal{M}$,
$$\sum_{F\in \cM_n} \frac{G(1,F)}{|F|^{2/3}} \prod_{\substack{P|F \\ P\nmid H}} (1-a_P) = \frac{\rho(n;1)q^{2n/3}}{\zeta_q(2)} \prod_{P\nmid H} \left(1-\frac{a_P}{|P|+1}\right) + O\left( q^{ n/3} \right) .$$
\end{prop}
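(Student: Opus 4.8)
The plan is to reduce the weighted sum to the unweighted sum $\sum_{F\in\cM_n} G(1,F)|F|^{-2/3}$ handled by Proposition \ref{DFLProp} (the case $a_P=0$), by expanding the Euler-product factor $\prod_{P|F,\,P\nmid H}(1-a_P)$ via Möbius-type inversion. Concretely, write $\prod_{P\mid F,\,P\nmid H}(1-a_P) = \sum_{D\mid \rad(F),\,(D,H)=1}\mu(D)\,a_D$, where $a_D:=\prod_{P\mid D}a_P$ is multiplicative on square-free $D$. Since $G(1,F)=0$ unless $F$ is square-free (Lemma \ref{DFLLem3}), we only see square-free $F$, so this identity is exact. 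Substituting and swapping the order of summation gives
\begin{align*}
\sum_{F\in\cM_n}\frac{G(1,F)}{|F|^{2/3}}\prod_{\substack{P\mid F\\P\nmid H}}(1-a_P)
=\sum_{\substack{D\in\cM\\(D,H)=1\\ D\ \mathrm{sq.free}}}\frac{\mu(D)a_D}{|D|^{2/3}}\sum_{\substack{E\in\cM_{n-\deg D}\\(E,D)=1}}\frac{G(1,DE)}{|E|^{2/3}},
\end{align*}
using $F=DE$ with $(D,E)=1$ (forced by square-freeness) and $|F|^{2/3}=|D|^{2/3}|E|^{2/3}$. By Lemma \ref{DFLLem3}, $G(1,DE)=G(E,D)G(1,E)$; and since $D$ is square-free, $G(E,D)=\overline{\chi}_D(E)G(1,D)$ by Lemma \ref{DFLLem2}, so $|G(E,D)|\le |G(1,D)|=|D|^{1/2}$ (when $(D,E)=1$). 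This bounds the inner Gauss-sum factor coming from $D$ by $|D|^{1/2}$.

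Next I would split the outer sum over $D$ at a threshold. For the inner sum over $E$, I would apply Proposition \ref{DFLProp} with $f=E$ (so $f_1=E$, $f_2=f_3=1$ since $E$ is square-free and coprime to $D$), noting the coprimality condition $(E,D)=1$ can be incorporated exactly as in the passage from the square-free family to $\cM_{3g+1}$ — i.e. one runs the argument of Proposition \ref{DFLProp} with the extra local condition at primes dividing $D$, which only modifies the Euler factor $\prod_{P\mid f}(1+|P|^{-1})^{-1}$ to $\prod_{P\mid DE}(1+|P|^{-1})^{-1}$ in the main term and contributes harmlessly to the error. This yields, for each square-free $D$ coprime to $H$,
\begin{align*}
\sum_{\substack{E\in\cM_{n-\deg D}\\(E,D)=1}}\frac{G(1,DE)}{|E|^{2/3}}
= \rho(n;1)\,\frac{q^{2(n-\deg D)/3}}{\zeta_q(2)}\,\overline{\chi}_D(\text{stuff})\cdots\,\prod_{P\mid D}(1+|P|^{-1})^{-1}\cdot(\cdots)+O_\epsilon\!\left(q^{(\frac13+\epsilon)(n-\deg D)}+q^{n-\deg D}|D|^{\frac14+\epsilon}\right),
\end{align*}
after absorbing the $G(1,D)$ and character factors; multiplying back by $\mu(D)a_D|D|^{-2/3}$ and summing over $D$, the main terms reassemble (via $q^{2n/3}/\zeta_q(2)$ times an Euler product over $P\nmid H$ of $1-a_P/(|P|+1)$), while the $D$-sum over the error terms converges because $|a_D|\le|D|^{-1}$ forces enough decay: $\sum_D |D|^{-1-2/3}\cdot q^{\deg D}\cdot q^{-\deg D/3}\cdots$ converges, and the dominant surviving error is $O(q^{n/3})$. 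I would need to truncate the $D$-sum at $\deg D\le \theta n$ for a suitable $\theta$ so that the tail $\deg D>\theta n$ is controlled by the trivial bound $|G(1,DE)|\le|DE|^{1/2}$ together with $|a_D|\le|D|^{-1}$; a short optimization shows this tail is also $O(q^{n/3})$.

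The main obstacle, and the step requiring the most care, is justifying that Proposition \ref{DFLProp} can be applied \emph{with the coprimality restriction $(E,D)=1$} uniformly in $D$, with an error term that remains summable against $\mu(D)a_D|D|^{-2/3}$. This is exactly analogous to the transition from $\sum_{F\in\cH(3g+1)}$ to $\sum_{F\in\cM_{3g+1},(F,f)=1}$ carried out just before \eqref{DualGaussSum}, so in practice one re-examines the proof of \cite[Proposition 3.1]{DFL}: the convexity input \cite[Proposition 3.11]{DFL} for $\widetilde\psi$ carries an extra dependence on $D$ of size $|D|^{O(\epsilon)}$ at most (the local factors at $P\mid D$ are bounded), which is harmless. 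The secondary point to check is that the reassembled main term is genuinely the advertised Euler product $\prod_{P\nmid H}\bigl(1-a_P/(|P|+1)\bigr)$: this follows since $\sum_{D\ \mathrm{sq.free},(D,H)=1}\mu(D)a_D|D|^{-2/3}\cdot q^{-2\deg D/3}\prod_{P\mid D}(1+|P|^{-1})^{-1}=\prod_{P\nmid H}\bigl(1-\tfrac{a_P}{|P|(1+|P|^{-1})}\bigr)=\prod_{P\nmid H}\bigl(1-\tfrac{a_P}{|P|+1}\bigr)$, matching the claim.
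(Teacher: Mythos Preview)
Your overall strategy---expand $\prod_{P\mid F,\,P\nmid H}(1-a_P)$ by M\"obius, swap, apply Proposition~\ref{DFLProp} to the inner sum, and reassemble the Euler product---is exactly the paper's approach. But your application of Proposition~\ref{DFLProp} is garbled, and this causes the ``main obstacle'' you describe, which in fact does not exist.

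After writing $F=DE$ with $D$ square-free, $(D,H)=1$, $(D,E)=1$, the clean move (which the paper makes) is to use Lemma~\ref{DFLLem3} in the \emph{other} order:
\[
G(1,DE)=G(D,E)\,G(1,D),
\]
so the inner sum becomes $G(1,D)\sum_{E\in\cM_{n-\deg D},\,(E,D)=1}G(D,E)$. This is \emph{precisely} the sum in Proposition~\ref{DFLProp} with parameter $f=D$ (so $f_1=D$, $f_2=1$) and summation variable $F=E$; the coprimality $(E,D)=1$ is already the hypothesis $(F,f)=1$ there. You instead wrote $G(1,DE)=G(E,D)G(1,E)$ and then proposed to ``apply Proposition~\ref{DFLProp} with $f=E$'', but $E$ is the summation variable, not the parameter---that application makes no sense as stated. (Your factorization is equivalent to the paper's via cubic reciprocity, since $G(E,D)G(1,E)=G(1,D)G(D,E)$; the point is only to recognize which object plays the role of $f$.) Once you apply the proposition with $f=D$, the main term is
\[
G(1,D)\,\rho(n-\deg D;D)\,\frac{\overline{G(1,D)}}{|D|^{2/3}}\,\frac{q^{4(n-\deg D)/3}}{\zeta_q(2)}\prod_{P\mid D}\Bigl(1+\tfrac{1}{|P|}\Bigr)^{-1}
=\frac{\rho(n;1)}{\zeta_q(2)}\,\frac{q^{4n/3}}{|D|}\prod_{P\mid D}\Bigl(1+\tfrac{1}{|P|}\Bigr)^{-1},
\]
using $|G(1,D)|^2=|D|$ and $\rho(n-\deg D;D)=\rho(n;1)$. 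Summing $\mu(D)a_D$ times this over $D$ (divided by $q^{2n/3}$) gives exactly $\tfrac{\rho(n;1)q^{2n/3}}{\zeta_q(2)}\prod_{P\nmid H}\bigl(1-\tfrac{a_P}{|P|+1}\bigr)$, with tail $O(q^{-n})$ since $|a_D|\le|D|^{-1}$. The error terms from Proposition~\ref{DFLProp} summed against $|a_D|\le|D|^{-1}$ give $O(q^n)$, hence $O(q^{n/3})$ after dividing by $q^{2n/3}$.

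In particular: there is no need to re-open the proof of \cite[Proposition~3.1]{DFL} to accommodate an extra coprimality condition, no dependence on $D$ to track through $\widetilde\psi$, and no truncation of the $D$-sum at $\deg D\le\theta n$ is required (the sum already has $\deg D\le n$ and converges outright). Your final Euler-product identity is correct, but the bookkeeping leading to it (the factors $|D|^{-2/3}\cdot q^{-2\deg D/3}$) does not match what actually appears once the proposition is applied correctly.
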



\begin{proof}

We first expand the Euler product as
$$\prod_{\substack{P|F \\ P\nmid H}} (1-a_P) = \sum_{\substack{D|F \\ (D,H)=1}} \mu(D) a_D, \quad \quad \mbox{ where } \quad \quad a_D := \prod_{P|D} a_P.$$
Notice that by hypothesis on $a_P$,  have  $|a_D| \leq \frac{1}{|D|}$ which we will use often.
Expanding the Euler product like this, we obtain
\begin{align}\nonumber
    \sum_{F\in \cM_n} \frac{G(1,F)}{|F|^{2/3}} \prod_{\substack{P|F \\ P\nmid H}} (1-a_P) & = \frac{1}{q^{2n/3}} \sum_{F\in \cM_n} G(1,F) \sum_{\substack{D|F \\ (D,H)=1}} \mu(D)a_D \\ \label{sum-over-D}
    & = \frac{1}{q^{2n/3}} \sum_{\substack{D\in \cM_{\leq n} \\ (D,H)=1}}   \mu(D) a_D \sum_{F\in \cM_{n-\deg(D)}} G(1,FD).
\end{align}
The idea now is to use the work of \cite{DFL} (specifically Proposition \ref{DFLProp} which is \cite[Proposition 3.1]{DFL}) to evaluate this innermost sum.

Now, Lemmas \ref{DFLLem2} and \ref{DFLLem3} shows that
$$G(1,FD) = \begin{cases} G(D,F)G(1,D) & (F,D)=1 \\ 0 & (F,D)\not=1 \end{cases}.$$
Applying Proposition \ref{DFLProp} we get (since $D$ is always square-free) that
\begin{align*}
    \sum_{F \in \cM_{n-\deg(D)}} G(1,FD)  = & G(1,D)\sum_{\substack{F \in \cM_{n-\deg(D)} \\ (F,D)=1 }}G(D,F) \\
    = & G(1,D)\rho(n-\deg D ; D) \frac{\overline{G(1,D)}}{|D|^{2/3}} \frac{q^{4(n-\deg D)/3}}{\zeta_q(2)} \prod_{P|D} \left(1+\frac{1}{|P|}\right)^{-1} \\ & + O_\epsilon\left(G(1,D)\left(  \frac{q^{(\frac{1}{3}+\epsilon)(n-\deg D)}}{|D|^{\frac{1}{6}}} + q^{(n-\deg D)}|D|^{\frac{1}{4}+\epsilon} \right)\right) \\
    & = \frac{\rho(n;1)}{\zeta_q(2)} \frac{q^{4n/3}}{|D|} \prod_{P|D} \left(1+\frac{1}{|P|}\right)^{-1} + O_\epsilon\left(  \frac{q^{(\frac{1}{3}+\epsilon)n}}{|D|^{\epsilon}} + \frac{q^{n}}{|D|^{\frac{1}{4}-\epsilon}} \right).
 \end{align*}
Hence, summing the main term over $D$, we find that
\begin{align*}
    \sum_{\substack{D\in \cM_{\leq n}\\ (D,H)=1}} \frac{\mu(D)a_D}{|D|} \prod_{P|D}\left(1+\frac{1}{|P|}\right)^{-1} & = \sum_{\substack{D\in \cM\\ (D,H)=1}} 
    \frac{\mu(D)a_D}{|D|} \prod_{P|D}\left(1+\frac{1}{|P|}\right)^{-1} + O\left(\frac{1}{q^n}\right)  \\
    & = \prod_{P\nmid H} \left(1 - \frac{a_P}{|P|+1}\right) + O\left(\frac{1}{q^n}\right) ,
\end{align*}
\and summing the error term over $D$, we find that
$$\sum_{\substack{D\in \cM_{\leq n}\\ (D,H)=1}} a_D \left(  \frac{q^{(\frac{1}{3}+\epsilon)n}}{|D|^{\epsilon}} + \frac{q^{n}}{|D|^{\frac{1}{4} - \varepsilon}} \right) \ll q^{n}.$$
Replacing in \eqref{sum-over-D},
this  finishes the proof.
\end{proof}

Applying Proposition \ref{DFLPropExt}, we obtain
\begin{align*}
    \sum_{\substack{f_1\in \cM_{n_1} \\ f_3\in \cM_{n_3}}} \frac{\overline{G(1,f_1)}}{|f_1|^{2/3}} &\prod_{P|f_1f_3} \left(1+\frac{1}{|P|}\right)^{-1} \\
   &=  \sum_{f_3\in \cM_{n_3}} \prod_{P|f_3}\left(1+\frac{1}{|P|}\right)^{-1} \sum_{f_1\in \cM_{n_1}} \frac{\overline{G(1,f_1)}}{|f_1|^{2/3}} \prod_{\substack{P|f_1\\P\nmid f_3}} \left(1-\frac{1}{|P|+1}\right)\\
    &=  \frac{\overline{\rho(n_1;1)}q^{2n_1/3}}{\zeta_q(2)} \prod_P \left(1-\frac{1}{(|P|+1)^2}\right) \sum_{f_3\in \cM_{n_3}} \prod_{P|f_3}\left(1 +\frac{1}{|P|+1}\right)^{-1}   \\
    & \hspace{0.4cm} + O_\epsilon\left(q^{ n_1/3+n_3}\right)
\end{align*}
Now, with the observation that $\rho(m+3n;1)=\rho(m;1)$ and
$$q^{2n_1/3} = \sum_{f_1\in \cM_{n_1}} \frac{1}{|f_1|^{1/3}}$$
we can write
\begin{align*}
    & \sum_{n_1+3n_3\leq 3g-3A-1} \frac{\rho(n_1+1;1)\overline{\rho(n_1;1)}q^{2n_1/3} }{q^{(1-s)(n_1+3n_3)}} \sum_{f_3\in \cM_{n_3}} 
    \prod_{P|f_3}\left(1 +\frac{1}{|P|+1}\right)^{-1}\\
    = & \sum_{n\leq 3g-3A-1} \frac{\rho(n+1;1)\overline{\rho(n;1)}}{q^{(1-s)n}} \sum_{f_1f_3^3 \in \cM_n}  \frac{1}{|f_1|^{1/3}} \prod_{P|f_3}\left(1 +\frac{1}{|P|+1}\right)^{-1}.
\end{align*}
Using the definition of $\rho(d;f)$, we find that 
$$\overline{\epsilon(\chi_3)}q^{5/6} \rho(n+1;1)\overline{\rho(n;1)} = \begin{cases} q & 3|n \\ 0 & 3\nmid n. \end{cases}$$
Replacing the above in \eqref{MTs}, we can write
$$MT_s(3g,3A) = MMT_s(3g,3A)+O(MET_s(3g,3A))$$
where
\begin{align} \nonumber
    MMT_s(3g,3A) &:= \frac{q^{(\frac{4}{3}-s)3g+1}}{\zeta^2_q(2)} \prod_P \left(1 - \frac{1}{(|P|+1)^2}\right) \\   \label{MMTs}
    & \quad \times\sum_{m\leq g-A-1} \frac{1}{q^{(1-s)3m}} 
  \sum_{f_1f_3^3 \in \cM_{3m}}  \frac{1}{|f_1|^{1/3}} \prod_{P|f_3}\left(1 +\frac{1}{|P|+1}\right)^{-1}
\end{align}
and
$$MET_s(3g,3A) := q^{(\frac{4}{3}-s)3g +1 }\sum_{n_1+3n_3\leq 3g-3A-1} \frac{q^{n_1/3+n_3}}{q^{(1-s)(n_1+3n_3)}}.$$

\begin{lem}\label{D_METBound}

For any value of $A$, we get that
$$MET_s(3g,3A)\ll q^{(\frac{4}{3}-s)(3g+1)} E_s(3g,3A)$$
\end{lem}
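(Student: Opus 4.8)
The statement to prove is Lemma~\ref{D_METBound}, namely the bound
$$MET_s(3g,3A) := q^{(\frac{4}{3}-s)3g +1 }\sum_{n_1+3n_3\leq 3g-3A-1} \frac{q^{n_1/3+n_3}}{q^{(1-s)(n_1+3n_3)}} \ll q^{(\frac{4}{3}-s)(3g+1)} E_s(3g,3A).$$

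The plan is to evaluate the double sum $\sum_{n_1+3n_3\leq 3g-3A-1} q^{n_1/3 + n_3 - (1-s)(n_1+3n_3)}$ by reorganizing it according to the value of $n_3$, performing the geometric sum over $n_1$ first, and then summing over $n_3$. First I would simplify the exponent: the summand is $q^{(s - 2/3)n_1} \cdot q^{(3s-2)n_3}$, writing $n_1/3 + n_3 - (1-s)n_1 - 3(1-s)n_3 = (s - \tfrac23)n_1 + (3s - 2)n_3$. So the sum factors conveniently: for fixed $n_3$ we sum $q^{(s-2/3)n_1}$ over $0 \le n_1 \le 3g - 3A - 1 - 3n_3$, and then sum $q^{(3s-2)n_3}$ over $0 \le n_3 \le (3g-3A-1)/3 \le g - A$.

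The inner geometric sum over $n_1$ is where the transition at $s = 2/3$ enters. If $s < 2/3$ the ratio $q^{s-2/3} < 1$, so $\sum_{n_1} q^{(s-2/3)n_1} = O(1)$ uniformly; if $s = 2/3$ the inner sum is just the number of terms, which is $\le 3g - 3A = O(g-A)$; if $s > 2/3$ the inner sum is dominated by its top term $q^{(s - 2/3)(3g-3A-1-3n_3)}$, i.e.\ $O\big(q^{(3s-2)(g-A-n_3)}\big)$ up to the harmless factor $q^{-(s-2/3)(3n_3 +1)}$. Next I would feed each of these three cases into the outer sum over $n_3$. In the case $s < 2/3$ the outer sum $\sum_{n_3} q^{(3s-2)n_3}$ is again $O(1)$ since $3s - 2 < 0$, giving a total of $O(1)$, matching $E_s(3g,3A) = 1$. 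In the case $s = 2/3$, the inner contribution $O(g-A)$ multiplied by the outer sum $\sum_{n_3 \le g-A} q^{(3s-2)n_3} = \sum_{n_3 \le g-A} 1 = O(g-A)$ gives $O((g-A)^2)$, matching $E_s = (g-A)^2$. In the case $s > 2/3$, combining the inner bound $O(q^{(3s-2)(g-A-n_3)})$ with the outer factor $q^{(3s-2)n_3}$ produces $q^{(3s-2)(g-A)}$ independent of $n_3$, and the $O(g-A)$ values of $n_3$ contribute a factor $g-A$, yielding $O((g-A)q^{(3s-2)(g-A)})$, matching $E_s = (g-A)q^{(3s-2)(g-A)}$. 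Finally, tracking the prefactor, $q^{(\frac43 - s)3g + 1}$ differs from $q^{(\frac43 - s)(3g+1)} = q^{(\frac43-s)3g} q^{\frac43 - s}$ by a bounded multiplicative constant (depending only on $s$, or absorbed since $s$ ranges over a fixed interval in the applications), so the claimed bound follows.

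I do not expect a genuine obstacle here; this is an elementary estimation of a geometric double sum, and the only subtlety is bookkeeping the three regimes $s < 2/3$, $s = 2/3$, $s > 2/3$ consistently with the definition of $E_s(3g,3A)$ and making sure the implied constants are uniform (they depend at most on $s$, and on a compact $s$-interval they are absolute). The mildly delicate point, if any, is confirming that in the regime $s > 2/3$ the top term of the inner geometric progression really dominates uniformly in $n_3$ and that the leftover factor $q^{-(s-2/3)(3n_3+1)} \le 1$ causes no loss — which is immediate since $s > 2/3$ makes that exponent negative. I would write the argument as a short case analysis, citing only the elementary geometric series bound $\sum_{0 \le k \le K} r^k \ll \max(1, r^K)$ for $r \ne 1$ and $\sum_{0 \le k \le K} 1 = K+1$.
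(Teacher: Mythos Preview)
Your proof is correct and follows essentially the same approach as the paper: an elementary geometric-sum estimate with a case split at $s=2/3$. The only organizational difference is that the paper first observes the summand $q^{(s-2/3)n_1+(3s-2)n_3}=q^{(s-2/3)(n_1+3n_3)}$ depends only on $n=n_1+3n_3$, collapsing the double sum to $\sum_{n\le 3g-3A-1} q^{(s-2/3)n}\sum_{n_1+3n_3=n}1$ before bounding, whereas you sum over $n_1$ first and then $n_3$; the two routes are equivalent and equally short.
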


\begin{proof}

Indeed, we have
\begin{align*}
    MET_s(3g,3A) & = q^{(\frac{4}{3}-s)3g+1} \sum_{n\leq 3g-3A-1} q^{(s-2/3)n} \sum_{n_1+3n_3=n} 1 \\
    & \ll q^{(\frac{4}{3}-s)3g+1}  \begin{cases}  1 & s<2/3 \\(g-A)^2 & s=2/3 \\ (g-A)q^{(3s-2)(g-A)} & s>2/3 \end{cases}
\end{align*}
    
\end{proof}

\subsection{Computing the Main Dual Term}

\begin{lem}\label{MMTlem}
Let $C_q$ is as in Theorem \ref{MainThm}.
For any $\epsilon>0$ and  $s<1-\epsilon$, if $s\not=\frac{1}{3}$, we have
$$MMT_s(3g,3A) = -C_q \frac{q^{3g+1}}{\zeta_q(2)} \frac{q^{(1-3s)A}}{1-q^{3s-1}} + O_\epsilon\left( q^{(1+3\epsilon)g + 3(1-s)A + 3 - 2s} +  q^{(\frac{4}{3}-s)3g+1}  E_s(3g,3A) \right)$$
while if $s=\frac{1}{3}$, we have
$$MMT_s(3g,3A) = \frac{C_q q^{3g+1} }{\zeta_q(2)} \left( g-A -\frac{1}{3} + \frac{1}{q-1} +\sum_{P} \frac{\deg(P)}{|P|^3+2|P|^2-1}\right)  + O_\epsilon \left(  q^{(1+\epsilon)g + (2+\epsilon)A} +q^{3g+1} \right)$$
\end{lem}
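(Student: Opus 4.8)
The plan is to analyze the inner double sum in \eqref{MMTs} via a Perron/generating-function argument, exactly paralleling the proof of Lemma \ref{PrincMainTermLem}. First I would introduce the generating series
$$\mathcal{R}_s(v) := \sum_{m=0}^\infty \frac{v^{3m}}{q^{(1-s)3m}} \sum_{f_1 f_3^3 \in \cM_{3m}} \frac{1}{|f_1|^{1/3}} \prod_{P|f_3}\left(1+\frac{1}{|P|+1}\right)^{-1},$$
or rather the ungraded-by-$3$ version $\sum_{n} w^n \sum_{f_1 f_3^3\in\cM_n}\dots$ and then extract the part where $3\mid n$ by averaging over cube roots of unity. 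Expanding the sum over $f_1 f_3^3 = f$ as an Euler product, the local factor at $P$ is $\sum_{k\ge 0} (w/|P|^{1/3})^{\deg P\cdot k}$ coming from the square-free part $f_1$, twisted multiplicatively by the factor $(1+\frac1{|P|+1})^{-1}$ whenever the exponent is $\ge 3$, i.e. whenever $P\mid f_3$. Writing out this local factor and clearing denominators, one sees $\mathcal{R}_s(v)$ (after the root-of-unity average) is, up to an explicit Euler product convergent for $\Re$ of the exponent in a suitable range, equal to $\cZ_q$ evaluated at an argument with a pole when $v^3 q^{-(1-s)3}\cdot q^{-1} = q^{-1}$, i.e. at $v^{3(1-s)}$ hitting $q^{3s-1}$ — giving the pole at $v = q^{3s-1}$ responsible for the $q^{(1-3s)A}$ term, together with the pole at $v=1$ from $\cZ_q$.

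Next I would apply Proposition \ref{perron} to write
$$\sum_{m\le g-A-1} \frac{1}{q^{(1-s)3m}}\sum_{f_1 f_3^3\in\cM_{3m}}\frac1{|f_1|^{1/3}}\prod_{P|f_3}\Bigl(1+\tfrac1{|P|+1}\Bigr)^{-1} = \frac{1}{2\pi i}\oint \frac{\mathcal{R}_s(v)}{1-v}\frac{dv}{v^{g-A}},$$
shift the contour past the two poles (at $v=q^{3s-1}$ and $v=1$ when $s\ne\frac13$; a double pole at $v=1$ when $s=\frac13$), and collect residues. The residue at $v=q^{3s-1}$ produces $-C_q\,q^{(1-3s)A}/(1-q^{3s-1})$ after matching the Euler product $\mathcal{K}$ of Lemma \ref{PrincMainTermLem} (one checks the prime-by-prime factors agree, using $\zeta_q(2)$-bookkeeping to absorb the prefactor $q^{3g+1}/\zeta_q(2)$ and the global Euler product $\prod_P(1-(|P|+1)^{-2})/\zeta_q^2(2)$ already sitting in \eqref{MMTs}); the residue at $v=1$ is absorbed into the error term when $s\ne\frac13$ since it is $O(1)$ relative to the main scale, whereas at $s=\frac13$ the double-pole residue is $\mathcal{J}(1)(g-A) - \mathcal{J}'(1)$ for the relevant Euler product $\mathcal{J}$, and a logarithmic-derivative computation as in Lemma \ref{PrincMainTermLem} produces the prime sum $\sum_P \deg(P)/(|P|^3+2|P|^2-1)$ and the constant $-\frac13 + \frac1{q-1}$. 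The tail of the contour integral over the shifted circle $|v| = q^{(1-s)3 - \epsilon}$-type radius contributes $O_\epsilon(q^{(1+3\epsilon)g + 3(1-s)A + 3 - 2s})$ (resp. $q^{(1+\epsilon)g+(2+\epsilon)A}$ at $s=\frac13$) after multiplying back the $q^{(\frac43-s)3g+1}$ prefactor; and the $MET_s$-type term $q^{(\frac43-s)3g+1}E_s(3g,3A)$ is carried along from the $O$-term already separated in \eqref{MMTs}.

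The main obstacle I expect is \emph{bookkeeping the constants}: verifying that the Euler product emerging from the residue at $v=q^{3s-1}$, once combined with the prefactor $\prod_P(1-(|P|+1)^{-2})/\zeta_q^2(2)$ and the $1/\zeta_q(2)$ from $\mathcal{R}_s$, collapses exactly to $C_q = \prod_P(1-\frac1{|P|(|P|+1)}) = \mathcal{K}(1)$, so that the dual main term cancels the secondary term of the principal sum (Proposition \ref{PrincComp}); and similarly, at $s=\frac13$, that the logarithmic derivative of the combined Euler product yields precisely $\sum_P \deg(P)/(|P|^3+2|P|^2-1)$ rather than some neighboring prime sum. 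A secondary subtlety is the root-of-unity filter used to isolate $3\mid n$: the "twisted" pieces (at $v\to \xi_3 v$, $\xi_3^2 v$) have their poles moved off $|v|=q^{3s-1}$ and off $|v|=1$, so they only feed the error term, but one must check their contour tails are dominated by the same bounds — this is routine but must be done. Everything else is a direct transcription of the Perron-with-two-residues method already executed for the principal sum.
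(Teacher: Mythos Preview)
Your overall method---build a generating series for the $m$-sum in \eqref{MMTs}, apply Perron, shift the contour and collect residues---is exactly what the paper does. But there is a genuine gap in your pole-count. The generating series (in the paper's variable $v$ tracking $\deg(f_1f_3^3)$) factors as a product of \emph{two} zeta-type factors: one from the $f_1$-sum, $\sum_{f_1}(v/q^{4/3-s})^{\deg f_1}=(1-q^{s-1/3}v)^{-1}$, and one from the $f_3$-sum, $\cZ_q(v^3/q^{3(1-s)})=(1-q^{3s-2}v^3)^{-1}$, times an absolutely convergent Euler product $\cK_s(v)$. Together with the Perron kernel $(1-v^3)^{-1}$, this produces \emph{seven} poles inside the shifted contour: $v=q^{1/3-s}$ (the main term), $v=\xi_3^j q^{2/3-s}$ for $j=0,1,2$ (from the $f_3$-zeta), and $v=\xi_3^j$ for $j=0,1,2$ (from the kernel). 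You have only identified two of these.

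The poles you are missing are precisely the source of the $q^{(\frac43-s)3g+1}E_s(3g,3A)$ error term in the lemma. It does \emph{not}, as you write, get ``carried along from the $O$-term already separated in \eqref{MMTs}'': equation \eqref{MMTs} is the definition of $MMT_s$ and has no error term (the $MET_s$ piece was already split off and bounded in Lemma \ref{D_METBound}). The residues at $v=\xi_3^j q^{2/3-s}$ contribute $\asymp q^{(3s-2)(g-A)}$ and the residues at $v=\xi_3^j$ contribute $\asymp 1$; for $s<\tfrac23$ the latter dominate, for $s>\tfrac23$ the former do, and at $s=\tfrac23$ the two sets of poles coalesce into double poles, producing the $(g-A)^2$ in $E_s$. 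Without these residues your error analysis cannot reproduce the stated bound, and in particular you will not see the transition at $s=\tfrac23$. (Minor points: your pole ``$v=q^{3s-1}$'' should be at $q^{1-3s}$ in your variable, or $q^{1/3-s}$ in the paper's; and since $f_1,f_3$ range independently over all of $\cM$, it is cleaner to factor the generating series as a product of two separate sums rather than write a joint Euler product.)
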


\begin{proof}

We first note that we can rewrite the $m$-sum in $MMT_s(3g,3A)$ as
\begin{align}\label{series_to_compute}
\mathcal{S} :=    \sum_{m\leq g-A-1} \sum_{f_1f_3^3\in \cM_{3m}} \frac{1}{|f_1|^{\frac{4}{3}-s}|f_3|^{3(1-s)}} \prod_{P|f_3}\left(1+\frac{1}{|P|+1}\right)^{-1} .
\end{align}
We consider the generating series
\begin{align*}
    \mathcal{D}(v) & := \sum_{f_1,f_3\in\cM}   \frac{1}{|f_1|^{\frac{4}{3}-s}|f_3|^{3(1-s)}}  \prod_{P|f_3}\left(1 +\frac{1}{|P|+1}\right)^{-1}v^{\deg f_1f_3^3}\\
    & = \left(\sum_{f_1\in \cM} \frac{v^{\deg f_1}}{|f_1|^{\frac{4}{3}-s}}\right) \left( \sum_{f_3\in \cM} \prod_{P|f_3}\left(1 +\frac{1}{|P|+1}\right)^{-1} \frac{v^{3\deg f_3} }{|f_3|^{3(1-s)}}\right) ,
\end{align*}
and we compute
$$\sum_{f_1\in \cM}\frac{v^{\deg f_1}}{|f_1|^{\frac{4}{3}-s}} = \sum_{f_1\in \cM} \left(\frac{v}{q^{\frac{4}{3}-s}}\right)^{\deg f_1} = \zeta_q\left(\frac{v}{q^{\frac{4}{3}-s}}\right) = \frac{1}{1-q^{s-\frac{1}{3}}v}  $$
and
\begin{align*}
    \sum_{f_3\in \cM} \prod_{P|f_3}\left(1 +\frac{1}{|P|+1}\right)^{-1} \frac{v^{3\deg f_3}}{|f_3|^{3(1-s)}} & = \prod_P\left(1 + \left(1 +\frac{1}{|P|+1}\right)^{-1} \frac{\left(\frac{v}{q^{1-s}}\right)^{3\deg P}}{1-\left(\frac{v}{q^{1-s}}\right)^{3\deg P}} \right) \\
    & =\prod_P\left(1-\left(\frac{v}{q^{1-s}}\right)^{3\deg P}\right) \prod_P \left(1-\frac{v^{3\deg P}}{|P|^{3(1-s)}|P|+2}\right)\\
    & = \zeta_q\left(\frac{v^3}{q^{3(1-s)}}\right) \prod_P \left(1-\frac{v^{3\deg P }}{|P|^{3(1-s)}(|P|+2)}\right) .
\end{align*}
Let $$\cK_s(v) :=  \prod_P \left(1-\frac{v^{3\deg P}}{|P|^{3(1-s)}(|P|+2)}\right)$$
which is an analytic function on the region $|v|< q^{1-s}$. Then 
\begin{align} \label{def-DV} \mathcal{D}(v) = \frac{\cK_s(v)}{(1-q^{3s-2}v^3) (1-q^{s-\frac{1}{3}}v)}, \end{align}
can be meromorphically extended to the region $|v|\leq q^{1-s-\epsilon}$ with poles at $v=q^{\frac{1}{3}-s}$ and $v=\xi_3^jq^{\frac{2}{3}-s}$ for $j=0,1,2$ where $\xi_3$ is a primitive root of unity. Notice
that $\cK_s(v)$ is uniformly bounded for  $|v|\leq q^{1-s-\epsilon}$.

Using Proposition \ref{perron},  and moving the integral to $\Gamma_4 = \{v : |v|=q^{1-s-\epsilon}\}$, where $s<1-\epsilon$, where $s\not=\frac{1}{3},\frac{2}{3}$, we have (where $\mathcal{S}$ is defined by \eqref{series_to_compute})
\begin{align*}
\mathcal{S} &= \frac{1}{2\pi i} \oint_{|v|=q^{-2}} \frac{\mathcal{D}(v)}{1-v^3}\frac{dv}{v^{3(g-A-1)+1}}  \\
&=  -  \Res_{v=q^{\frac{1}{3}-s}} \left(\frac{\mathcal{D}(v)}{(1-v^3)v^{3(g-A-1)+2}} \right) \\
  & \quad  - \sum_{j=0}^2 \left(\Res_{v=\xi_3^jq^{\frac{2}{3}-s}}\left(\frac{\mathcal{D}(v)}{(1-v^3)v^{3(g-A-1)+1}} \right) - \Res_{v=\xi_3^j}\left(\frac{\mathcal{D}(v)}{(1-v^3)v^{3(g-A-1)+1}} \right)  \right)\\
  & \quad + \frac{1}{2\pi i} \oint_{\Gamma_4} \frac{\mathcal{D}(v)}{1-v^3}\frac{dv}{v^{3(g-A-1)+1}} .
\end{align*}
The integral over $\Gamma_4$ is bounded by 
\begin{align*}
q^{(s-1+\epsilon)(3(g-A-1)+1)} \frac{q^{1-s-\epsilon}}{1-q^{3(1-s-\epsilon)}} \ll_{s, \epsilon} q^{(s-1+\epsilon)(3(g-A-1)+1)},
\end{align*}
where the bound does not depend on $q$.  Multiplying by 
\begin{align}\label{multiply_by}
\frac{q^{(\frac{4}{3}-s)3g+1}}{\zeta^2_q(2)} \prod_P \left(1 - \frac{1}{(|P|+1)^2}\right),
\end{align}
the contribution of the integral over $\Gamma_4$ to $MMT_s(3g,3A)$  is bounded by
\begin{align} \nonumber &q^{(\frac43 - s) 3g + 1} q^{(s-1+\epsilon)(3g-3A-2)}  \prod_P \left(1 - \frac{1}{(|P|+1)^2}\right) \left(1 - \frac{1}{|P|^2}\right)^2
\\  \label{ET-Gamma4} & \quad \ll q^{(1+3\epsilon)g} q^{3(1-s)A} q^{3-2s}
\end{align}
since the Euler product is absolutely bounded (independently of $q$). This gives the first error term for $MMT_s(3g, 3A)$.

It remains to compute the contribution to $MMT_s(3g, 3A)$ of the other residues. For $v=q^{\frac{1}{3}-s}$, we have
\begin{align*}
    \Res_{v=q^{\frac{1}{3}-s}} \left(\frac{\mathcal{D}(v)}{(1-v^3)v^{3(g-A-1)+1}} \right)  & = \lim_{v=q^{\frac{1}{3}-s}} \left(\frac{-q^{\frac{1}{3}-s} \cK_s(v) }{(1-q^{3s-2}v^3)(1-v^3)v^{3(g-A-1)+1}} \right) \\
    & = -\frac{\cK_s(q^{\frac{1}{3}-s})}{1-q^{-1}} \frac{q^{(3s-1)(g-A-1)}}{1-q^{1-3s}}.
\end{align*}
We now observe that $\zeta_q(2) = \frac{1}{1-q^{-1}}$ and
\begin{align*}
     \prod_P \left(1 - \frac{1}{(|P|+1)^2}\right) \cK_s(q^{\frac{1}{3}-s}) & =  \prod_P \left(1 - \frac{1}{(|P|+1)^2}\right) \left( 1 - \frac{1}{|P|^2(|P|+2)} \right) \\
     & = \prod_P \left(1-\frac{1}{|P|(|P|+1)}\right) = C_q .
\end{align*}
Therefore, multiplying  the residue at $v=q^{\frac{1}{3}-s}$ by \eqref{multiply_by}, 
the contribution of the residue to $MMT_s(3g, 3A)$ is
\begin{align*}
    C_q \frac{q^{(\frac{4}{3}-s)3g+1}}{\zeta_q(2)}\frac{q^{(3s-1)(g-A-1)}}{1-q^{1-3s}} = -C_q \frac{q^{3g+1}}{\zeta_q(2)} \frac{q^{(1-3s)A}}{1-q^{3s-1}} 
\end{align*}
which gives the main term of $MMT_s(3g, 3A)$ for $s \neq \frac{1}{3}$.

We now use the fact that $\cK_s(v)$ is invariant under multiplication of $v$ by cube roots of unity to get that the sum the residues at $v=\xi_3^jq^{\frac{2}{3}-s}$ will be 
\begin{align*} 
    & \frac{\cK_s(q^{\frac{2}{3}-s})}{1-q^{2-3s}}q^{(3s-2)(g-A -1)} \sum_{j=0}^2 \Res_{v=\xi_3^jq^{\frac{2}{3}-s}} \left(\frac{\xi_3^{-j}q^{s-\frac{2}{3}}}{(1-\xi_3^jq^{\frac{1}{3}})(1-q^{3s-2}v^3)}\right) \nonumber \\
   & =  \frac{\cK_s(q^{\frac{2}{3}-s})}{1-q^{2-3s}}q^{(3s-2)(g-A-1)} \sum_{j=0}^2 \lim_{v\to\xi_3^jq^{\frac{2}{3}-s}} \left(\frac{\xi_3^{-j}q^{s-\frac{2}{3}}(v-\xi_3^jq^{\frac{2}{3}-s})}{(1-\xi_3^jq^{\frac{1}{3}})(1-q^{3s-2}v^3)}\right) \nonumber \\
   &  =  - \frac{\cK_s(q^{\frac{2}{3}-s})}{1-q^{2-3s}}q^{(3s-2)(g-A-1)} \sum_{j=0}^2 \lim_{v\to\xi_3^jq^{\frac{2}{3}-s}} \left(\frac{1}{(1-\xi_3^jq^{\frac{1}{3}}) \prod_{i\not=-j} (1-q^{s-\frac{2}{3}} \xi_3^i v)}\right) \nonumber \\
  &   =  \frac{\cK_s(q^{\frac{2}{3}-s})}{1-q^{2-3s}}\frac{q^{(3s-2)(g-A-1)}}{q-1} 
\end{align*}
and, similarly, the sum over residues at $v=\xi_3^j$ will be 
\begin{align*}
     &  \frac{\cK_s(1)}{(1-q^{3s-2})} \sum_{j=0}^2 \Res_{v=\xi_3^j} \left(\frac{\xi_3^{-j}}{(1-q^{s-\frac{1}{3}}\xi_3^j)(1-v^3)}\right)  \\
     & = - \frac{\cK_s(1)}{(1-q^{3s-2})} \sum_{j=0}^2 \frac{1}{(1-q^{s-\frac{1}{3}}\xi_3^j) \prod_{i\not=-j} (1-\xi_3^i \xi_3^j)} \\
     &=  \frac{\cK_s(1)}{(1-q^{3s-2})(1-q^{3s-1})}.
\end{align*}

Furthermore, we see that $\cK_s(q^{\frac{2}{3}-s})$ is independent in $s$ so that for any $s\not=\frac{2}{3}$, we have that multiplying 
the sum of the residues at $v=\xi_3^jq^{\frac{2}{3}-s}$ by
\eqref{multiply_by}, we get a contribution to $MMT_s(3g,3A)$ bounded by
$$q^{(\frac{4}{3}-s)3g+1}q^{(3s-2)(g-A-1)} \ll q^{(\frac{4}{3}-s)3g+1}  \begin{cases}  1 & s<2/3  \\ q^{(3s-2)(g-A)} & s>2/3 \end{cases}.$$
Similarly, for $s<1-\epsilon$, we have that $\cK_s(1)$ is bounded and so the contribution from the residues at $v=\xi_3^j$ will be bounded, and multiplying by \eqref{multiply_by}, 
this gives a contribution bounded by $q^{(\frac{4}{3}-s)3g+1}$. This accounts for the second error term in $MMT_s(3g,3A)$ for $s\not=\frac{1}{3},\frac{2}{3}$, and this completes the proof in those cases.

For $s=\frac{2}{3}$, the residue at $v=q^{s-\frac{1}{3}}$ remain the same but we now get double poles at $v=\xi_3^j$. In this case we get
\begin{align*}
    \sum_{j=0}^2 \Res_{v=\xi_3^j}\left(\frac{\mathcal{D}(v)}{(1-v^3)v^{3(g-A-1)+1}} \right) & = \sum_{j=0}^2\lim_{v\to \xi_3^j} \frac{d}{dv} \left( \frac{\cK_{\frac{2}{3}}(v)(v-\xi_3^j)^2}{(1-q^{\frac{1}{3}}v)(1-v^3)^2v^{3(g-A-1)+1}} \right)\\
    &\ll g-A.
\end{align*}

For $s=\frac{1}{3}$, the residues at $v=\xi_3^jq^{s-\frac{2}{3}}$, $j=0,1,2$ and $\xi_3^j$, $j=1,2$ remain the same but we now have a a double pole at $v=1$ with residue 
\begin{align*}
    \Res_{v=1}\left(\frac{\mathcal{D}(v)}{(1-v^3)v^{3(g-A-1)+1}} \right) & = \frac{d}{dv} \left. \left( \frac{\cK_{\frac{1}{3}}(v)}{(1-q^{-1}v^3)(1+v+v^2)v^{3(g-A-1)+1}} \right)\right\vert_{v=1}\\
    & = -\frac{\cK_{\frac{1}{3}}(1)}{1-q^{-1}}\left(  g-A - \frac{1}{3} +  \frac{1}{q-1} - \frac{1}{3}\frac{\cK_{\frac{1}{3}}'(1)}{K_{\frac{1}{3}}(1)}\right)
\end{align*}

Multiplying by \eqref{multiply_by}, we get a contribution of
\begin{align*}
    - \frac{C_q q^{3g+1} }{\zeta_q(2)} \left( g-A -\frac{1}{3} + \frac{1}{q-1} - \frac{1}{3}\frac{\cK_{\frac{1}{3}}'(1)}{\cK_{\frac{1}{3}}(1)}\right) ,
\end{align*}
and we finish the proof with the observation that
\begin{align*}
    \frac{K_{\frac{1}{3}}'(v)}{K_{\frac{1}{3}}(v)} = \frac{d}{dv} \log K_{\frac{1}{3}}(v) = -3\sum_{P} \frac{v^{3\deg P -1} \deg P }{|P|^3+2|P|^2-v^{3\deg P}}
\end{align*}

\end{proof}

We can now combine Lemmas \ref{D_ETBound}, \ref{D_METBound} and \ref{MMTlem} to prove Proposition \ref{DualComp}.

\section{Proof of Main Theorems} \label{proof-thms}

\subsection{Proof of Theorem \ref{MainThm}}

By \eqref{AFEApplied} and Propositions \ref{PrincComp} and \ref{DualComp}, we have for any $A=cg$ with $0<c<1$, and $\epsilon < s < 1-\epsilon$ but  $s \neq \frac13$ that
\begin{align*}
   & \frac{1}{|\fH(3g)|} \sum_{\chi\in \fH(3g)} L(s,\chi) = \frac{\mathcal{P}_s(3g,3A) + \mathcal{D}_s(3g,3A)}{|\fH(3g)|} \\
 &\hspace{1cm}  =  M_q(s)+ C_q \frac{q^{(1-3s)A}}{1-q^{3s-1}} 
+ O_\epsilon\Big(q^{(\epsilon-3s)A} + q^{ A - (1-\epsilon)(3g+1)} + \frac{q^{ 3 (1-s+\epsilon)A}} {q^{\frac{3g}{2}}}\Big) \\
 &\hspace{1cm}  -C_q \frac{q^{(1-3s)A}}{1-q^{3s-1}} + O_\epsilon \left(   \frac{q^{(\frac34+\epsilon)g}}{q^{3A(s + \frac14+\epsilon)}}
+  \frac{q^{3(1-s)A+2-2s}}{q^{(2-3\epsilon)g}} +  q^{(1-3s)g} E_s(3g, 3A) \right) \\
 &\hspace{1cm}  =  M_q(s)
+ O_\epsilon\Big(  \frac{q^{ 3 (1-s+\epsilon)A}} {q^{\frac{3g}{2}}} 
+ \frac{q^{(\frac34+\epsilon)g}}{q^{3A(s + \frac14+\epsilon)}} + \frac{q^{3(1-s)A+2-2s}}{q^{(2-3\epsilon)g}} +
q^{(1-3s)g} E_s(3g, 3A) 
\Big)
\end{align*}
Optimizing the error term, we chose the cut off that equalizes the first two term error terms. Hence, we chose $A=\left\lfloor\frac{3}{5}g\right\rfloor$ and conclude that
for $s\not=\frac{1}{3}$
$$\frac{1}{|\fH(3g)|} \sum_{\chi\in \fH(3g)} L(s,\chi) = M_q(s) + O_{\epsilon} \left( q^{\frac{3}{10}(1-6s+\epsilon)g} +  q^{-\frac15 (1 + 9s - 15 \varepsilon )g + 2-2s} + q^{(1-3s)g}E_s(g) \right)$$
In the case $s=\frac{1}{3}$, we apply Propositions \ref{PrincComp} and \ref{DualComp} with $A=\left\lfloor\frac{3}{5}g\right\rfloor$ which gives 
\begin{align*}
 \frac{1}{|\fH(3g)|} \sum_{\chi\in\fH(3g)} L({\textstyle{\frac13}},\chi) &=  C_q (g + B_q) +  O (1)
\end{align*}
where $O(1)$ does not depend on $q$ or $g$ as long as $g \geq 2$, and 
\begin{align} \label{def-Bq}
B_q =   \frac{2}{3} - \frac{1}{q-1} + \sum_P \frac{\deg(P)}{|P|^2+|P|-1} + \sum_P \deg(P)\frac{|P|+2}{|P|^3+2|P|^2-1}.
\end{align}


\subsection{Proof of Theorem \ref{RMTThm}}

We see from the definition of $C_q$ that $\lim_{q\to\infty} C_q=1$ and by the prime polynomial theorem we get
\begin{align*}
\sum_P \deg(P) \frac{|P|+2}{|P|^3+2|P|^2-1} \ll \sum_{n=1}^\infty \frac{1}{q^{n}} = \frac{1}{q-1},
\end{align*}
and similarly for  $\sum_P \frac{\deg(P)}{|P|^2+|P|-1}$. This implies that $\lim_{q\to\infty} B_q= \frac{2}{3}$, and 
 we get from Theorem \ref{MainThm} that for $g$ fixed, 
\begin{align} \label{left-side}
    \lim_{q\to\infty} \frac{1}{|\fH(3g)|} \sum_{\chi \in \fH(3g)} L({\textstyle{\frac13}},\chi) = g + \tfrac23 + O(1) = g+O(1),
\end{align}
and it remains to compute the matrix integral 
$$
\int_{U(3g)} \deg(1-U) \overline{\det(1-\wedge^3U)} dU
$$
to see that it matches.

First we need to write the functions $\det(1-U)$ and $\det(1-\wedge^3U)$ in a common basis. For any infinite tuple $(\lambda_j)$ of non-negative integers with only finitely many non-zero entries we may write the partition
\begin{align}\label{partition}
\lambda = \prod_{j=1}^{\infty} j^{\lambda_j}
\end{align}
consisting of $\lambda_j$ copies of $j$.

The Newton identities then tells us for any $U\in U(N)$, we can write
$$\det(1-U) = \sum_{|\lambda|\leq N} \frac{(-1)^{\ell(\lambda)}}{z_{\lambda}} P_{\lambda}(U)$$
where $\ell(\lambda) = \sum \lambda_j$ is the length of $\lambda$ and 
$$ P_{\lambda}(U) = \prod_{j=1}^{\infty} \Tr(U^j)^{\lambda_j} \mbox{ and } z_{\lambda} = \prod_{j=1}^\infty j^{\lambda_j}\lambda_j!.$$
A result of Diaconis and Evans then shows that these $P_{\lambda}(U)$ form an orthogonal basis. 

\begin{thm}[Theorem 2.1 from \cite{DE}] \label{Diaconis-Evans}
    For any partitions $\lambda,\mu$ with $\min(|\lambda|,|\mu|)\leq N$, we have
    $$\int_{U(N)} P_\lambda(U) \overline{P_{\mu}(U)}dU = \delta_{\lambda \mu} z_{\lambda}$$
    where $\delta_{\lambda\mu}$ is the indicator function $\lambda = \mu$.
\end{thm}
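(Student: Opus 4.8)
The plan is to deduce this from the character theory of the symmetric group together with the orthonormality of the irreducible characters of $U(N)$. By symmetry assume $|\lambda|\le|\mu|$, so $\min(|\lambda|,|\mu|)=|\lambda|=:n\le N$, and write $x_1,\dots,x_N$ for the eigenvalues of $U$, so that $P_\kappa(U)=\prod_j\Tr(U^j)^{\kappa_j}=\prod_j p_j(x_1,\dots,x_N)^{\kappa_j}=p_\kappa(x_1,\dots,x_N)$ is the power-sum polynomial.

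First I would invoke the standard expansion of power sums into Schur functions in the ring of symmetric functions, $p_\kappa=\sum_{\nu\vdash|\kappa|}\chi^\nu_\kappa\,s_\nu$, where $\chi^\nu_\kappa$ denotes the value of the irreducible $S_{|\kappa|}$-character indexed by $\nu$ on the class of cycle type $\kappa$, and then specialize to $N$ variables. For $\kappa=\lambda$ every $\nu\vdash n$ has at most $n\le N$ parts, so no Schur polynomial is killed and the expansion stays complete: $P_\lambda(U)=\sum_{\nu\vdash n}\chi^\nu_\lambda\,s_\nu(x_1,\dots,x_N)$. For $\kappa=\mu$ we likewise have $P_\mu(U)=\sum_{\rho\vdash|\mu|}\chi^\rho_\mu\,s_\rho(x_1,\dots,x_N)$, where the terms with $\ell(\rho)>N$ contribute $s_\rho\equiv 0$ in $N$ variables and may be discarded.

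Next, using that the Schur polynomials $s_\nu(x_1,\dots,x_N)$ with $\ell(\nu)\le N$ are among the irreducible characters of $U(N)$ and hence satisfy $\int_{U(N)}s_\nu\,\overline{s_\rho}\,dU=\delta_{\nu\rho}$ (any integral involving an $s_\rho$ with $\ell(\rho)>N$ vanishing trivially), I would substitute the two finite expansions, integrate term by term, and use that the $S_n$-characters are real:
$$\int_{U(N)}P_\lambda(U)\overline{P_\mu(U)}\,dU=\sum_{\nu\vdash n}\ \sum_{\rho\vdash|\mu|}\chi^\nu_\lambda\,\chi^\rho_\mu\,\delta_{\nu\rho}=\Big[\,|\lambda|=|\mu|\,\Big]\sum_{\nu\vdash n}\chi^\nu_\lambda\,\chi^\nu_\mu.$$
When $|\lambda|\ne|\mu|$ this equals $0=\delta_{\lambda\mu}z_\lambda$; when $|\lambda|=|\mu|=n$, the second (``column'') orthogonality relation for $S_n$, namely $\sum_{\nu\vdash n}\chi^\nu_\lambda\chi^\nu_\mu=\delta_{\lambda\mu}\,|C_{S_n}(\sigma_\lambda)|=\delta_{\lambda\mu}z_\lambda$ (with $\sigma_\lambda$ of cycle type $\lambda$, so $|C_{S_n}(\sigma_\lambda)|=\prod_j j^{\lambda_j}\lambda_j!=z_\lambda$), finishes the proof.

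The only place the hypothesis $\min(|\lambda|,|\mu|)\le N$ is used — and, I expect, the one subtlety to pin down in a careful write-up — is the claim that the Schur expansion of $P_\lambda$ is complete in $N$ variables. This is precisely what fails when $|\lambda|>N$: the partitions $\nu\vdash|\lambda|$ with more than $N$ rows then drop out, so the diagonal term becomes a truncated sum $\sum_{\ell(\nu)\le N}(\chi^\nu_\lambda)^2<z_\lambda$ and the clean formula is lost. The remaining ingredients — the identity $P_\kappa=p_\kappa$, the two orthogonality relations (for $U(N)$ and for $S_n$), and the observation that unequal sizes $|\lambda|\ne|\mu|$ force the integral to vanish — are routine.
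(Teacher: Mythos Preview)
The paper does not prove this statement: it is quoted as Theorem~2.1 of Diaconis--Evans \cite{DE} and used as a black box in the proof of Theorem~\ref{RMTThm}. There is therefore no proof in the paper to compare against.

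Your argument is correct and is one of the standard proofs of this result. The decomposition of power sums into Schur functions via $S_n$-characters, Schur orthogonality for irreducible characters on $U(N)$, and column orthogonality for $S_n$ combine exactly as you describe, and you have correctly isolated where the hypothesis $\min(|\lambda|,|\mu|)\le N$ enters: it guarantees that the Schur expansion on the shorter side loses no terms upon specialization to $N$ variables, so that the column-orthogonality sum $\sum_{\nu\vdash n}\chi^\nu_\lambda\chi^\nu_\mu$ is genuinely complete.
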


So it remains to write $\det(1-\wedge^3U)$ in the basis formed by $P_{\lambda}(U)$. For this, we will need some new notation.

If $\lambda,\mu$ are partitions, written as in \eqref{partition}, then define the product  $\lambda\cdot\mu$ as
$$\lambda\cdot\mu = \prod_{j=1}^{\infty} j^{\lambda_j+\mu_j},$$
and by repeating this process, we can define positive integer powers of partitions
$$\lambda^k = \prod_{j=1}^{\infty} j^{k\lambda_j}.$$
We denote the the zero partition as 
$$\mathbb{0} = \prod_{j=1}^{\infty} j^0$$
and so we obtain $\lambda\cdot \mathbb{0} = \lambda$ and $\lambda^0 = \mathbb{0}$.
Finally, for any positive integer $k$, we then define 
$k\lambda = \prod_{j=1}^{\infty} (kj)^{\lambda_j}$
the partition consisting of $\lambda_j$ copies of $kj$. 

Now, let $P_3$ denote the set of partitions of $3$ and consider a tuple of non-negative integers $(a_{j\mu})_{j,\mu}$ where $\mu$ runs over the elements of $P_3$, $j$ runs over the positive integers and only finitely many of the $a_{j\mu}$ are non-zero. Then, for any such tuple we define the partition
$$\lambda(a_{j\mu}) = \prod_{j=1}^\infty \prod_{\mu\in P_3} (j\mu)^{a_{j\mu}}$$
Section 5.2 of \cite{M2} shows that we can write
$$\det(1-\wedge^3U) = \sum_{(a_{j\mu})_{j,\mu} }  \left[\prod_{j=1}^{\infty} \prod_{\mu\in P_3}\frac{1}{a_{j\mu}!} \left(\frac{(-1)^{\ell(\mu)}}{jz_\mu}\right)^{a_{j\mu}}\right] P_{\lambda(a_{j\mu})}(U).$$
Now, combining these facts and using Theorem \ref{Diaconis-Evans}, we obtain
\begin{align*}
    & \int_{U(N)} \det(1-U)\overline{\det(1-\wedge^3U)}dU \\
    = & \sum_{\substack{(a_{j\mu})_{j,\mu} \\ \lambda} } \left[ \prod_{j=1}^{\infty} \prod_{\mu\in P_3}\frac{1}{ a_{j\mu}!} \left(\frac{(-1)^{\ell(\mu)}}{j z_\mu}\right)^{a_{j\mu}} \right] \frac{(-1)^{\ell(\lambda)}}{z_{\lambda}}\int_{U(N)}  P_{\lambda}(U)\overline{P_{\lambda(a_{j\mu})}(U)} dU\\
    = & \sum_{\substack{(a_{j\mu})_{j,\mu}\\ |\lambda(a_{j\mu})|\leq N}} \prod_{j=1}^{\infty} \prod_{\mu\in P_3}\frac{1}{ a_{j\mu}!} \left(\frac{1}{jz_\mu}\right)^{a_{j\mu}}
\end{align*}
where we have used the fact that if $\lambda = \lambda(a_{j\mu})$ then 
$$\ell(\lambda) = \sum_{j=1}^{\infty} \sum_{\mu\in P_3} a_{j\mu}\ell(\mu).$$
To compute the sum over the $(a_{j\mu})_{j,\mu}$, we consider the generating series
\begin{align*}
    F(x) := \sum_{(a_{j\mu})_{j,\mu}} \prod_{j=1}^{\infty} \prod_{\mu\in P_3}\frac{1}{ a_{j\mu}!} \left(\frac{1}{jz_\mu}\right)^{a_{j\mu}} x^{|\lambda(a_{j\mu})|}.
\end{align*}
We use the fact that
$$|\lambda(a_{j\mu})| = 3 \sum_{j,\mu} ja_{j\mu}$$
to write
\begin{align*}
    F(x) & = \sum_{(a_{j\mu})_{j,\mu}} \prod_{j=1}^{\infty} \prod_{\mu\in P_3}\frac{1}{ a_{j\mu}!} \left(\frac{x^{3j}}{j z_\mu}\right)^{a_{j\mu}} \\
    & = \prod_{j=1}^{\infty} \prod_{\mu\in P_3} \left[\sum_{a_{j\mu}=0}^{\infty} \frac{1}{ a_{j\mu}!} \left(\frac{x^{3j}}{j z_\mu}\right)^{a_{j\mu}}\right] \\
    & = \prod_{j=1}^{\infty} \prod_{\mu\in P_3} \exp\left[ \frac{x^{3j}}{jz_\mu} \right]  = \exp\left(\sum_{j=1}^{\infty} \sum_{\mu\in P_3} \frac{x^{3j}}{jz_\mu}\right) \\
    & = \frac{1}{1-x^3} = \sum_{n=0}^{\infty} x^{3n}
\end{align*}
where we used the well-known fact that for any $r$
$$\sum_{\lambda \in P_r} \frac{1}{z_\lambda} = 1.$$
Hence, we may conclude that
$$\int_{U(N)} \det(1-U)\overline{\det(1-\wedge^3U)}dU = \sum_{3n\leq N} 1 = \left\lfloor \frac{N}{3} \right\rfloor +1.$$
and Theorem \ref{RMTThm} then follows from this and \eqref{left-side}, using $N = 3g$.

\end{document}